\newcommand{\keywords}[1]{\textbf{Key words:} #1}
\newcommand{\msc}[1]{\textbf{MSC2010:} #1}
\newcommand{\ack}{\textbf{Acknowledgment}}
\DeclareMathOperator{\Int}{Int}
\DeclareMathOperator{\Var}{Var}
\DeclareMathOperator{\Supp}{Supp}
\DeclareMathOperator{\TV}{TV}
\DeclareMathOperator{\Diam}{diam}
\DeclareMathOperator{\MT}{T_{mix}}
\def\ER{R_{\rm{eff}}} 
\def\ERN{R_{\rm{eff}}^{(N)}} 
\def\TERN{\widehat{R}_{\rm{eff}}^{(N)}}
\def\CT{T_{\rm{cov}}} 
\def\SCT{\tau_{\rm{cov}}}
\def\MT{T_{\rm{mix}}}
\def\CPHI{C_{\rm{PHI}}}
\def\CHK{C_{\rm{HK}}}  
\def\CCS{C_{\rm{CS}}}
\def\CPI{C_{\rm{PI}}}
\def\CQI{C_{\rm{QI}}}
\def\cHK{c_{\rm{HK}}}
\def\cPHI{c_{\rm{PHI}}}
\def\creff{c_{\rm{R}}}
\def\cFK{c_{\rm{FK}}}
\def\cgr{c_{\rm{g}}}
\def\CD{C_{\rm{D}}}
\def\CV{C_{\rm{V}}}
\def\ce{c_{\rm{e}}}
\def\cv{c_{\rm{v}}}
\def\R{\mathbb{R}}
\def\rnum#1{\expandafter{\romannumeral #1}} 
\def\Rnum#1{\uppercase\expandafter{\romannumeral #1}} 
\newcommand{\abbr}[1]{{\small\sc\lowercase{#1}}}
\definecolor{red}{rgb}{1,0,0}
\definecolor{green}{rgb}{0,1,0}
\definecolor{blue}{rgb}{0,0,1}
\definecolor{cyan}{cmyk}{1,0,0,0}
\definecolor{magenta}{cmyk}{0,1,0,0}
\definecolor{yellow}{cmyk}{0,0,1,0}
\definecolor{purple}{rgb}{0.9,0,0.8}
\definecolor{gray}{rgb}{0.7,0.7,0.7}
\title{Cutoff for lamplighter chains on fractals}
\date{July 15th, 2018}
\author{ 
 Amir Dembo
 \thanks{Department of Mathematics and Department of Statistics, Stanford University
   Sequoia Hall, 390 Serra Mall Stanford, California 94305-4065 USA ~~~~~ E-mail:{\tt  amir@math.stanford.edu}}
  \and 
 Takashi Kumagai
  \thanks{Research Institute for Mathematical Sciences,
Kyoto University, Kyoto 606-8502, Japan. \newline ~~~~~E-mail: {\tt kumagai@kurims.kyoto-u.ac.jp}}
\and Chikara Nakamura
\thanks{Research Institute for Mathematical Sciences,
Kyoto University, Kyoto 606-8502, Japan. \newline ~~~~~E-mail: {\tt chikaran@kurims.kyoto-u.ac.jp}}
}
\begin{document}

\maketitle
\footnote[0]{ 
 \keywords{Markov chain, total variation, mixing time, cutoff phenomenon, lamplighter group, heat kernel, fractal graphs, late points}  \\
 \msc{Primary: 60J10; Secondary: 28A80, 35K08.}  \\
}
\makeatother

\newtheorem{Definition}{Definition}[section]
\newtheorem{Proposition}[Definition]{Proposition}
\newtheorem{Theorem}[Definition]{Theorem}
\newtheorem{Assumption}[Definition]{Assumption}
\newtheorem{Lemma}[Definition]{Lemma}
\newtheorem{Remark}[Definition]{Remark}
\newtheorem{Example}[Definition]{Example}
\newtheorem{Corollary}[Definition]{Corollary}
\newtheorem{Notation}[Definition]{Notation}
\newtheorem{Application}[Definition]{Application}
\newtheorem{Condition}[Definition]{Condition}

\makeatletter
\@addtoreset{equation}{section}
\def\theequation{\thesection.\arabic{equation}}

\begin{abstract}
We show that the total-variation mixing time of the
lamplighter random walk on fractal graphs exhibit
sharp cutoff when the underlying graph is transient
(namely of spectral dimension greater than two). 
In contrast, we show that such cutoff can not occur 
for strongly recurrent underlying graphs (i.e. of 
spectral dimension less than two).   
\end{abstract}

\section{Introduction}
Markov chain mixing rate is an active subject of study in probability theory 
(see \cite{LPW08,SC97} and the references therein). Mixing 
is usually measured in terms of total variation distance, which for
probability measures $\mu, \nu$ on a countable set $H$ is  
     \begin{align*}
          \| \mu - \nu \|_{\rm{TV}} := \sup_{A \subset H} | \mu (A) - \nu (A) | 
                                =  \frac{1}{2}  \sum_{x \in H} | \mu (x) - \nu (x)| 
                               =  \sum_{x \in H}  [ \mu (x) - \nu (x)]_{+}\,. 
     \end{align*}
Specifically, the ($\epsilon$-)total variation mixing time 
of a Markov chain $Y = \{ Y_t \}_{t \ge 0}$ on the set of vertices 
of a finite graph $G=(V,E)$, having the 
invariant distribution $\pi$, is  
      \begin{align}
                 &\MT (\epsilon ;G) := \min \Big\{ t \ge 0 \Bigm| \max_{x \in V(G)} \| P_x ( Y_t = \cdot ) - \pi \|_{\rm{TV}} \le \epsilon \Big\} .  \notag  
      \end{align} 
One of the interesting topics in the study of Markov chains is the  
cutoff phenomena, mainly for the total variation mixing time 
(see e.g. \cite[Chapter 18]{LPW08}).
 The study of cutoff phenomena for Markov chains was initiated by Aldous, Diaconis and their collaborators early in 80s, and there has been extensive work in the past several decades. 
Specifically,  
a sequence of Markov chains $\{Y^{(N)}\}_{N \ge 1}$ on the vertices of finite graphs $\{G^{(N)}\}_{N \ge 1}$ has cutoff with threshold $\{ a_N \}_{N \ge 1}$ iff 
       \begin{align*}
                \lim_{N \to \infty} a_N^{-1} \MT (\epsilon ; G^{(N)})  = 1,   \qquad \text{  $ \forall \epsilon \in (0,1)$.}  
       \end{align*}
In the (switch-walk-switch) lamplighter Markov chains, 
each vertex of a locally connected, countable 
(or finite) graph $G= (V,E)$ is equipped with a lamp 
(from $\mathbb{Z}_2 = \{ 0,1 \}$), and a move consists of 
three steps:  

\smallskip
\noindent
(a). The walker turns on/off the lamp at the vertex where he/she is, uniformly at random. 

\smallskip
\noindent
(b). The walker either stays at the same vertex, or moves to a randomly chosen nearest neighbor vertex. 

\smallskip
\noindent
(c). The walker turns on/off the lamp at the vertex where he/she is, 
uniformly at random.   

\medskip
Such a lamplighter chain on the graph $G$ is precisely the random walk on 
the corresponding wreath product $G^{\ast} = \mathbb{Z}_2 \wr G$ 
(see Section  \ref{Subsec:Prel} for the precise definitions), 
and the total variation mixing time of a lamplighter chain is 
closely related to the expected cover time of the underlying 
graph $G$, denoted hereafter by $T_{\rm{cov}} (G)$. 
The study of cutoff for lamplighter chains goes back to 
H\"{a}ggstr\"{o}m and Jonasson  \cite{HJ97} who showed 
that cutoff does not occur for the chain on one-dimensional tori, 
whereas for lamplighter chains on complete graphs, 
it occurs at the threshold $a_N = \frac{1}{2} T_{\rm{cov}} (G^{(N)})$.
Peres and Revelle \cite{PR04} further explore the relation between 
the mixing time of lamplighter chain on $G^{(N)}$ and 
$T_{\rm{cov}} (G^{(N)})$, showing that, under suitable assumptions, 
         \begin{align}  \label{eq:LampMix}
                \big(  \frac{1}{2}  + o(1)  \big) \CT (G^{(N)}) \le T_{\rm{mix}} (\mathbb{Z}_2 \wr G^{(N)}; \epsilon)   \le (1 + o(1) ) \CT (G^{(N)}). 
         \end{align}
The bounds of \eqref{eq:LampMix} cannot be improved in general, as the 
lower and the upper bounds are achieved for complete graphs, and 
two-dimensional tori, respectively. The same bounds apply 
for any Markov chain on $\mathfrak{X} \wr G^{(N)}$, where in steps (a) 
and (c) the walker independently chooses the element from the 
finite set $\mathfrak{X}$ according to some fixed strictly positive law. 
Indeed, for such chains total variation mixing time has mostly to do 
with the geometry of late points of $G$, namely those reached by the 
walker much later than most points. In particular, the \abbr{lhs} of \eqref{eq:LampMix}
represents the need to visit all but $O(\sqrt{\sharp V(G)})$ 
points before mixing of the lamps can occur and the \abbr{rhs} 
reflects having the lamps at the invariant product measure once 
all vertices have been visited. Miller and Peres \cite{MP12} 
provide a large class of graphs for which the \abbr{lhs} of 
\eqref{eq:LampMix} is sharp, with 
cutoff at $\frac{1}{2} \CT (G^{(N)})$. Among those 
are lazy simple random walkers on $d$-dimensional tori, any $d \ge 3$,
for which \cite{MS17} further examines the total-variation 
distance between the law of late points and  
i.i.d. Bernoulli points (c.f. 
\cite[Section 1]{MS17} and the references therein). Finally,
the analysis of effective resistance on $G^{(N)} = 
\mathbb{Z}_N^2 \times \mathbb{Z}_{[h \log N]}$ plays a key role in 
\cite{DDMP16}, where it is shown that the threshold 
$a(h) \CT (G^{(N)})$ for mixing time cutoff of 
lamplighter chain on such graphs, continuously interpolates 
between $a(0)=1$ and $a(\infty)=\frac{1}{2}$.

\medskip  
Another topic of much current interest is the long time asymptotic 
behavior of random walks $\{X_t\}$ on (infinite) fractal graphs
(see \cite{Barlow98,Kigami01,Kumagai14} and the references therein).    
Such random walks are typically anomalous and sub-diffusive, so
generically $E_x [ d(X_0, X_t) ] \asymp t^{1/d_w}$ and the 
\emph{walk-dimension $d_w$} exceeds two for many fractal graphs,
in contrast to the \abbr{srw} on $\mathbb{Z}^d$ for which
$d_w=2$ (the notation $a_t \asymp b_t$ is used hereafter 
whenever $c^{-1} a_t \le b_t \le c a_t$ for some $c<\infty$). 
A related important parameter is the \emph{volume growth exponent $d_f$} 
such that $\sharp B(x,r)\asymp r^{d_f}$, where $\sharp B(x,r)$ 
counts the number of vertices whose graph distance from $x$ is 
at most $r$. The growth of the eigenvalues of the corresponding 
generator is then measured by the \emph{spectral dimension $d_s := 2d_f/d_w$,}
with the Markov chain $\{X_t\}$ strongly recurrent when 
$d_s<2$ and transient when $d_s>2$ 
(while $d_f=d_s=d$ for the \abbr{srw} on $\mathbb{Z}^d$).

\medskip
We study here the cutoff for total variation mixing time of the 
lamplighter chain when $G^{(N)}$ are increasing finite subsets 
of a fractal graph. While gaining important insights on the 
geometry of late points for the corresponding walks, our main result 
(see Theorem \ref{Thm:Main}), is the following dichotomy:
\begin{itemize}
            \item  When $d_s < 2$ there is no cutoff for 
             the corresponding lamplighter chain, whereas 
     \end{itemize}
     \begin{itemize}
             \item  if $d_s > 2$, such cutoff occurs at the
             threshold $a_N = \frac{1}{2} T_{\rm{cov}} (G^{(N)})$.  
     \end{itemize}
If the behavior of the lamplighter chain in the critical case 
$d_s=2$ 
is likewise universal, then it should be having a mixing cutoff at  
$a_N=T_{\rm{cov}} (G^{(N)})$ (as in the two-dimensional tori 
example from \cite{PR04}).

\subsection{Framework and main results}  \label{Subsec:Prel}
Given a countable, locally finite and  connected graph $G = (V(G), E(G))$, 
denote by $d (\cdot, \cdot) =d_G (\cdot, \cdot)$ the graph distance 
(with $d(x,y)$ the length of the shortest path between $x$ and $y$), 
and by $B(x,r) = B_G (x,r) := \{ y \in V(G) \mid d(x,y) \le r \}$ the 
corresponding ball of radius $r$ centered at $x$. A weighted graph 
is a pair $(G,\mu)$ with $\mu : V(G) \times V(G) \to [0, \infty )$ 
a conductance, namely a function $(x,y) \mapsto \mu_{xy}$ such that 
$\mu_{xy} = \mu_{yx}$ and $\mu_{xy} > 0$ if and only if $xy \in E(G)$. 
We use the notation $V(x,r) := \mu (B(x,r))$ and more generally
$\mu (A) := \sum_{x \in A} \mu_x$ for $A \subset V(G)$, where  
     \begin{align}  \label{eq:Weight}
\mu_x := \sum_{y :xy \in E(G)} \mu_{xy},  \qquad \text{ $ \forall  x \in V$}.  
     \end{align}

The discrete time random walk $X = \{ X_t \}_{t \ge 0}$ associated with the weighted graph $(G,\mu)$ is the Markov chain on $V(G)$ having the
transition probability 
       \begin{align*}
                P(x,y) :=   \frac{\mu_{xy}}{ \mu_x } \,.  
       \end{align*} 
Let $P_t(x,y) = P_t (x,y; G) := P_x (X_t = y)$ denote the 
distribution of $X_t$ with the corresponding heat kernel 
          \begin{align*}
                       p_t (x,y) := \frac{ P_t (x,y) }{ \mu_y }   \qquad 
                       \forall t \in \mathbb N \cup \{0\}
          \end{align*}
and Dirichlet form             
         \begin{align*}
                 \mathcal{E} (f,f)   & := \frac{1}{2}  \sum_{x,y \in V(G)} (f(x) - f(y) )^2 \mu_{xy}   = - \langle f, (P-I)f \rangle_{\mu},   
                 \quad   \text{for $f : V(G) \to \mathbb{R}$} ,
         \end{align*}
where $\langle f, g\rangle_{\mu} := \sum_{x} f(x) g(x) \mu (x)$.
The corresponding effective resistance 
$\ER (\cdot , \cdot )$ is given by  
         \begin{align*}
                \ER (A,B)^{-1} := \inf \{   \mathcal{E} (f,f)  \mid  f|_A = 1, f|_B = 0 \} , \quad   \text{for $A, B \subset V(G)$} .
          \end{align*}           
We also consider the lazy random walk 
$\tilde{X} = \{ \tilde{X}_t \}_{t \ge 0}$ on 
$(G,\mu)$, having the transition probability 
         \begin{align*}  
                  \tilde{P} (x,y) :=   \begin{cases}    \frac{1}{2} P(x,y),    &  \text{ if $x \neq y$,}  \\   \frac{1}{2},   &  \text{ if $x = y$}.  \end{cases}
         \end{align*} 
The Dirichlet form and heat kernel of $\tilde{X}$ are then, respectively
 $\tilde{\mathcal{E}}(f,f)=\frac{1}{2}\mathcal{E}(f,f)$ 
and    
\begin{align*}
\tilde{p}_t (x,y) 
:= \frac{ \tilde{P}_t (x,y) }{ \mu_y }   \qquad  \qquad  
\forall t\in \mathbb N\cup \{0\} \,.
\end{align*} 
We consider finite weighted graphs 
$\{ (G^{(N)}, \mu^{(N)} ) \}_{N \ge 1}$ with 
$\sharp V(G^{(N)}) \to \infty$. Using hereafter 
$\cdot^{(N)}$ for objects on $(G^{(N)}, \mu^{(N)})$ (e.g.
denoting by $\ER^{(N)}(\cdot,\cdot)$ the effective 
resistance on $(G^{(N)},\mu^{(N)})$), we make the following 
assumptions, which are standard 
in the study of sub-Gaussian heat kernel estimates (\abbr{sub-GHKE}) 
(c.f. \cite{Barlow17,Kumagai14}).   
\begin{Assumption} \label{Ass:Weight}
For some $1 \le d_f  < \infty$, $\ce, \cv  < \infty$, $p_0 > 0$ 
and all $N \ge 1$ we have
        \begin{enumerate}   \renewcommand{\labelenumi}{(\alph{enumi})}  
               \item Uniform ellipticity: \;\; $\ce^{-1} \le \mu_{xy}^{(N)} \le \ce$ \quad  $\forall xy \in E(G^{(N)})$.  

               \item  $p_0$-condition: \;\;  $\displaystyle  \frac{\mu^{(N)}_{xy} }{ \mu^{(N)}_x } \ge p_0$ \quad $\forall xy \in E(G^{(N)})$.     

               \item $d_f$-set condition: \;  $\cv^{-1}  r^{d_f} \le V^{(N)} (x,r) \le \cv  r^{d_f}$ \; $\forall x \in V(G^{(N)})$, $1 \le r \le 
               \Diam \{ G^{(N)} \} \to \infty$.   
        \end{enumerate}
\end{Assumption}

\begin{Assumption}[Uniform Parabolic Harnack Inequality]\label{Ass:UPHI}
For some $2 \le d_w < \infty$, 
$\CPHI <\infty$, $\cPHI \in (0,1]$ and all $N \ge 1$, 
whenever $u : [0, \infty) \times V(G^{(N)}) \to [0,\infty )$ satisfies 
              \begin{align}\label{eq:he-GN}
                      u (t+1, x)  - u (t,x)   = (P^{(N)} - I) u(t,x),  \qquad 
                      \forall t \in [0,4T], x \in B^{(N)} (x_0,2R) \,,  
              \end{align}
for some $x_0 \in V(G^{(N)})$, $R \le \cPHI  \Diam \{ G^{(N)} \}$ 
and $T \ge 2R$, $T \asymp R^{d_w}$, one also has that 
              \begin{align*}  
\max_{ \substack{ z \in B^{(N)}(x_0,R) \\ s \in [T, 2T]} }  \{u(s,z)\}  
                             \le  \CPHI   \min_{ \substack{ z \in B^{(N)} (x_0, R) \\ s \in [3T, 4T]} } \{ u(s,z) + u(s+1,z) \} .  
              \end{align*}  
\end{Assumption}

\begin{Remark}   \label{Rem:Card}
Thanks to the $p_0$-condition we have that $1 \ge \deg_{G^{(N)}}(x) p_0$, so 
the graphs $\{G^{(N)}\}$ are of uniformly bounded degrees 
          \begin{align*}
                     \sup_N   \sup_{x \in V(G^{(N)})} \{ \deg_{G^{(N)}} (x) \} < \infty.  
           \end{align*}
Together with the uniform ellipticity, this implies that 
for some $\tilde{c} < \infty$  
           \begin{align*}
                     \tilde{c}^{-1}   \le \mu_x^{(N)}  \le  \tilde{c}, 
                 \qquad  \qquad \forall N \ge 1, \;\; x \in V(G^{(N)}), 
           \end{align*}
           and thereby 
           \begin{align}   \label{eq:Card}
                       \tilde{c}^{-1} \, \sharp A  \le  \mu^{(N)} (A)  \le \tilde{c} \, \sharp A,  \qquad \forall N \ge 1, \;\;  A \subset V(G^{(N)}) \,. 
           \end{align}  
\end{Remark}

\medskip
To any finite underlying graph $G=(V,E)$ corresponds the 
wreath product $G^{\ast} = \mathbb{Z}_2 \wr G$ such that 
     \begin{align*}
              & V(G^{\ast}) =  \mathbb{Z}_2^V \times V ,  \\      
              & E(G^{\ast}) = \left\{ \{ (f,x), (g,y) \} \mid  \text{ $f=g$ and 
              $xy \in E$,  or $x=y$ and $f (v) =g (v)$ for $v \neq x$}  \right\}   
     \end{align*}
and we adopt throughout the convention of using $\bm{y} = (f,y)$ 
for the vertices of $\mathbb{Z}_2 \wr G$. The lazy random walk $\tilde{X}$ on 
$(G, \mu)$ induces the switch-walk-switch lamplighter chain, namely the
random walk $Y=\{ Y_t = (f_t, \tilde{X}_t) \}_{t \ge 0}$ on 
$\mathbb{Z}_2 \wr G$ whose transition probability is  
     \begin{align*}
             P^{\ast} ((f,x), (g,y)) = 
                   \begin{cases}
  \frac{1}{2} \tilde{P} (x,y) = \frac{1}{4},
     &  \text{ if $x=y$ and $f (v) = g(v)$ for any $v \neq x$,}  \\ 
                         \frac{1}{4}   \tilde{P} (x,y) = \frac{1}{8} P (x,y),            &  \text{ if $x \neq y$ and $f (v) = g(v)$ for any $v \neq x,y$,}  \\
                         0,                                                                     &  \text{ otherwise.}
                   \end{cases}
     \end{align*}
One way to describe the moves of the Markov chain $Y$ is as done 
before: first $Y$ switches the lamp of the current position,
then moves on $G$ according to $\tilde{P}$, and finally switches 
the lamp on vertex on which it landed. We denote by 
$Y^{(N)} = \{ Y^{(N)}_t = (f_t, \tilde{X}_t^{(N)} ) \}_{t \ge 0}$ 
the lamplighter chain on weighted graphs $(G^{(N)},\mu^{(N)})$, using
$P^{\ast} (\cdot, \cdot ; G)$ whenever we wish to emphasize 
its underlying graph. The invariant (reversible) distribution 
of each $X^{(N)}$, and its lazy version $\tilde X^{(N)}$, is clearly   
      \begin{align}  \label{eq:InvUnder}
               \pi^{(N)} (x )  = \frac{\mu_x^{(N)}}{ \mu^{(N)} (G^{(N)})}\,,  \qquad \forall  x \in V(G^{(N)})  
      \end{align}  
with the corresponding invariant distribution of $Y^{(N)}$ being  
     \begin{align*}  
               \pi^{\ast} ( \bm{y} ; G^{(N)} )  =  2^{-\sharp V(G^{(N)})}  \, \pi^{(N)} (y),  \qquad \text{ $\forall  \bm{y} = (f,y) \in V( \mathbb{Z}_2 \wr G^{(N)})$.}  
      \end{align*}  
   
\bigskip  

We next state our main result.

\begin{Theorem}  \label{Thm:Main}   
Consider lamplighter chains $Y^{(N)}$ whose underlying 
weighted graphs  
$\{(G^{(N)},\mu^{(N)})\}_{N \ge 1}$ 
satisfy Assumptions \ref{Ass:Weight}, \ref{Ass:UPHI}. 

\smallskip
\noindent
(a)  If $d_f < d_w$, then there is no cutoff for the 
         total variation mixing time of $Y^{(N)}$.  

\smallskip
\noindent
(b) If $d_f > d_w$, then the total variation 
         mixing time for $Y^{(N)}$ 
         admits cutoff at $a_N = \frac{1}{2} T_{\rm{cov}} (G^{(N)})$.
\end{Theorem}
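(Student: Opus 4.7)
The plan is to refine the Peres-Revelle sandwich \eqref{eq:LampMix}, which already constrains $\MT(\epsilon;\mathbb{Z}_2\wr G^{(N)})$ to the interval $[\tfrac{1}{2}\CTN,\,\CTN](1+o(1))$. The inputs beyond this sandwich come from the sub-Gaussian heat-kernel and effective-resistance estimates implied by Assumptions \ref{Ass:Weight}--\ref{Ass:UPHI}: one has $\ERN(x,y)\asymp d(x,y)^{d_w-d_f}$ (growing with separation when $d_f<d_w$) or $\ERN(x,y)$ uniformly bounded on macroscopic scales (when $d_f>d_w$). This dichotomy dictates whether the set $U_t$ of sites not yet visited by $\tilde{X}^{(N)}$ up to time $t$ looks like a sparse, approximately independent sample or is instead geometrically clustered, which in turn drives the presence or absence of cutoff.

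For part (b), $d_f>d_w$, I follow the approach of Miller-Peres \cite{MP12}, adapted from $\mathbb{Z}^d$ to the fractal setting. Set $t^\star=\tfrac{1}{2}(1+\epsilon)\CTN$. The core claims are (i) $|U_{t^\star}|$ is tightly concentrated, and (ii) conditionally on the trajectory of $\tilde{X}^{(N)}$ up to $t^\star$, the lamp bits on $U_{t^\star}$ are close in total variation to i.i.d.\ Bernoulli$(1/2)$. For (ii), the decisive ingredient is an approximate independence of first-hitting events at a family of well-separated target vertices, which in the transient regime follows from the uniformly bounded off-diagonal Green's function (equivalently, bounded effective resistance) together with a standard second-moment / chaining argument. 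Since the underlying $\tilde{X}^{(N)}$ mixes on a scale $\asymp\Diam\{G^{(N)}\}^{d_w}=o(\CTN)$, the lamp bits on the complement of $U_{t^\star}$ are already uniform, and (i)-(ii) together yield $\MT(\epsilon;\mathbb{Z}_2\wr G^{(N)})\le t^\star$. The lower bound from \eqref{eq:LampMix} is then refined, again via (i), to preclude mixing before $\tfrac{1}{2}(1-\epsilon)\CTN$, completing the cutoff at $\tfrac12\CTN$.

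For part (a), $d_f<d_w$, strong recurrence forces late points to be clustered: excursions between balls of radius $r$ take time $\asymp r^{d_w}$, so once the walk enters a ball it tends to fill it almost completely before leaving, and $|U_t|$ fails to concentrate. I would quantify this by showing that $|U_t|/\mathbb{E}[|U_t|]$ exhibits order-one fluctuations across a time window of width $\Theta(\CTN)$, using the growing resistance $\ERN$ to lower-bound the variance of local excursion counts. This spreads the decay of total-variation distance over a macroscopic window; concretely, I aim to exhibit $0<\epsilon_1<\epsilon_2<1$ and $\delta>0$ with $\MT(\epsilon_1;\mathbb{Z}_2\wr G^{(N)})\ge(1+\delta)\MT(\epsilon_2;\mathbb{Z}_2\wr G^{(N)})$ for all large $N$, which rules out cutoff.

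The principal obstacle is the fractal analog of the $\mathbb{Z}^d$ late-point decoupling arguments. The latter lean on translation invariance and explicit Green's function asymptotics that are unavailable here; instead the decoupling/coupling for (b) must be implemented through a dyadic scale decomposition of $G^{(N)}$ using the uniform sub-Gaussian kernel bounds of Assumptions \ref{Ass:Weight}--\ref{Ass:UPHI} to control pairwise correlations of hitting events, while calibrating the scale at which hitting times become effectively independent. For part (a) the complementary difficulty is to prove quantitative non-concentration of $|U_t|$, which requires sharp second-moment estimates on excursion counts that exploit recurrence rather than transient decay, and then to translate this non-concentration into a corresponding spread of the total-variation profile of the lamplighter chain.
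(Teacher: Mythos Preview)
For part (b) your approach is essentially the paper's: both invoke Miller--Peres \cite[Theorem~1.5]{MP12}. The paper treats that theorem as a black box and simply verifies its hypotheses --- uniform local transience via the Green-function decay of Proposition~\ref{Prop:Green}, the transience conditions of Assumption~\ref{Ass:Tran} via Proposition~\ref{Prop:MTE} and the $d_f$-set condition, and the uniform Harnack inequality (Assumption~\ref{Ass:Har}) as an immediate consequence of the \abbr{UPHI} hypothesis. Your sketch of re-running the decoupling argument via a dyadic scale decomposition is not needed; the existing Miller--Peres machinery applies once those conditions are checked.

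For part (a), however, your route differs substantially from the paper's, and is considerably harder. The paper does \emph{not} study the uncovered set $U_t$ or its fluctuations. Instead it sandwiches the total-variation distance itself between two explicit exponentials in $t/T_N$. The lower bound (Proposition~\ref{Prop:LBTV10}) comes from the event that the lazy walk stays confined to a ball of radius $\tfrac14 R_N$ up to time $t$; this has probability $\ge c_1^{-1}e^{-c_1 t/T_N}$ by the sub-Gaussian exit-time estimate (Corollary~\ref{Cor:HKEN}), and on that event a macroscopic region of lamps is untouched, forcing TV distance near $1$. The upper bound combines Proposition~\ref{Prop:LampTV} with the exponential cover-time tail $P_z(\SCTN > t)\le c_0 e^{-t/(c_0 T_N)}$ of Proposition~\ref{Thm:UCT10}, the latter proved by covering $V(G^{(N)})$ with $O(1)$ resistance balls (using $\ERN\asymp d^{d_w-d_f}$) and applying local-time modulus-of-continuity estimates. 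Since the resulting lower bound on $\MT(\epsilon;\mathbb{Z}_2\wr G^{(N)})/T_N$ diverges as $\epsilon\to 0$ while the upper bound stays bounded for $\epsilon\ge\tfrac12$, cutoff is ruled out in one line.

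Your proposed route --- second-moment non-concentration of $|U_t|$ and then a translation to spread of the TV profile --- would face a genuine difficulty: the lamplighter TV distance depends on the \emph{geometry} of $U_t$, not merely its cardinality, so fluctuations of $|U_t|$ do not straightforwardly yield the inequality $\MT(\epsilon_1)\ge(1+\delta)\MT(\epsilon_2)$ you aim for. The paper's confinement argument sidesteps this entirely, and also does not need the Peres--Revelle sandwich \eqref{eq:LampMix} as an input (you would first have to verify its hypotheses in the fractal setting).
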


Note that for countable, infinite weighted graph 
$(G,\mu)$, having $d_f < d_w$ (resp. $d_f > d_w$),
corresponds to a strongly recurrent (resp. transient), random walk 
$\tilde X$ in the sense of \cite[Definition 1.2]{BCK05} (see
\cite[Theorem 1.3, Proposition 3.5 and Lemma 3.6]{BCK05}). In Section \ref{Sec:qwExam} we provide 
a host of fractal graphs satisfying Assumptions \ref{Ass:Weight} and \ref{Ass:UPHI}, with
the Sierpinski gaskets and the two-dimensional Sierpinski carpets as
typical examples of Theorem \ref{Thm:Main}(a),  while
high-dimensional Sierpinski carpets with small holes serve as 
typical examples of Theorem \ref{Thm:Main}(b).     

\medskip  
In Section \ref{Sec:Pre}, we adapt to the setting of large 
finite weighted graphs, certain consequences of 
Assumptions \ref{Ass:Weight} and \ref{Ass:UPHI} which  
are standard for infinite graphs. In case $d_f<d_w$,
the relevant time scale for the cover time $\tau_{\rm{cov}} (G^{(N)})$ is 
shown there to be 
\begin{align}\label{eq:Time}
T_N :=  (R_N)^{d_w} \qquad \text{ where } \qquad R_N := \Diam \{ G^{(N)} \} \,.  
\end{align}
Applying in Section \ref{Sec:CT} results from Section \ref{Sec:Pre}
that apply for $d_f<d_w$, we derive the following uniform exponential tail 
decay for $\tau_{\rm{cov}} (G^{(N)})/T_N$, which is of  
independent interest.  
\begin{Proposition}\label{Thm:UCT10}  
If Assumptions \ref{Ass:Weight}, \ref{Ass:UPHI} hold 
with $d_f< d_w$, then for some $c_0$ finite and all $t$, $N$,
\begin{align}\label{eq:tcov-spread}
 \sup_{z \in V(G^{(N)})}  
\{ P_z ( \tau_{\rm{cov}} (G^{(N)}) > t) \} \le c_0 e^{-t/(c_0 T_N)} \,.
\end{align}  
\end{Proposition}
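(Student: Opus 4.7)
The plan is to reduce the exponential tail bound to a uniform-in-$N$ upper bound on the expected cover time, $M_N := \sup_{z \in V(G^{(N)})} E_z[\tau_{\rm{cov}}(G^{(N)})] \le C T_N$, and then promote this to exponential concentration via an iterated Markov / strong-Markov argument.

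The first step is to obtain $M_N \le CT_N$ uniformly in $N$ from the machinery of Section \ref{Sec:Pre}. The commute-time identity, combined with the effective-resistance bound $R_{\rm{eff}}^{(N)}(x,y) \le c\, d(x,y)^{d_w - d_f}$ (which follows in the strongly recurrent case from Assumptions \ref{Ass:Weight}, \ref{Ass:UPHI}) and the volume estimate $\mu^{(N)}(G^{(N)}) \asymp R_N^{d_f}$ (from Remark \ref{Rem:Card} and the $d_f$-set condition), yields $\max_{x,y} E_x[\tau_y] \le c' T_N$. Upgrading this to the analogous bound on the expected \emph{cover} time requires an input beyond the generic Matthews bound (which would cost an extraneous $\log |V(G^{(N)})| \asymp \log R_N$ factor): one uses that, in the strongly recurrent regime $d_s < 2$, within a time window of length $T_N$ the walk visits every prescribed vertex with probability bounded away from zero, uniformly in $N$. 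This is the fractal-specific ingredient supplied by Section \ref{Sec:Pre}.

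Once $M_N \le CT_N$ is in hand, set $s := 2 M_N$. Markov's inequality gives $\sup_z P_z(\tau_{\rm{cov}} > s) \le 1/2$. The key observation for iteration is that on the event $\{\tau_{\rm{cov}} > s\}$, if $U \subset V(G^{(N)})$ denotes the (random) set of unvisited vertices at time $s$, then the remaining cover time from $X_s$ is dominated by the full cover time started from $X_s$, whose conditional mean is therefore still at most $M_N$. Applying the strong Markov property at time $s$ and inducting,
\begin{equation*}
\sup_z P_z\bigl(\tau_{\rm{cov}}(G^{(N)}) > k s\bigr) \le \Bigl(\sup_w P_w\bigl(\tau_{\rm{cov}}(G^{(N)}) > s\bigr)\Bigr)^k \le 2^{-k}, \qquad k \ge 1\,.
\end{equation*}
For arbitrary $t \ge 0$ one then applies this with $k := \lfloor t/s \rfloor$ and absorbs constants to arrive at \eqref{eq:tcov-spread}.

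The main obstacle is the uniform mean bound $M_N \le CT_N$: discarding the logarithmic loss inherent in the classical Matthews inequality is the substantive use of strong recurrence here, and is what makes the estimate genuinely sharper than what one would get in the transient regime. The strong-Markov iteration that turns the mean bound into an exponential tail is entirely standard.
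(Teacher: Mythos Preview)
Your iteration via the strong Markov property is correct and matches how the paper concludes. The gap lies exactly where you flag it: the passage from $\max_{x,y} E_x[\tau_y] \le c\, T_N$ to $M_N \le C\, T_N$. The ingredient you invoke --- that within time $T_N$ the walk visits every prescribed vertex with probability bounded away from zero --- is just Markov's inequality applied to the hitting-time bound, giving $P_z(\tau_y \le 2c\,T_N) \ge 1/2$ for each \emph{fixed} $y$. To reach the cover time you must still handle all $y$ at once, and any union (or Matthews-type) argument over the $\sharp V(G^{(N)}) \asymp R_N^{d_f}$ vertices reinstates precisely the $\log R_N$ factor you want to discard. Section~\ref{Sec:Pre} contains no direct bound on $E_z[\tau_{\rm cov}]$, so as written your first step is not justified.

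The paper never bounds $E_z[\tau_{\rm cov}]$; it shows $\inf_z P_z(\tau_{\rm cov} \le c\, T_N) \ge 1/2$ directly, by replacing single vertices with balls in the \emph{resistance metric}. The key input is Croydon's modulus of continuity for local times (Lemma~\ref{Lem:UTELT20}): once the walk has accumulated normalized local time $\ge 2\epsilon$ at $x$ by time $S_N$ (probability $\ge 7/8$ by an elementary renewal estimate, Lemma~\ref{Lem:Loc10}), the local time is with high probability $\ge \epsilon$ throughout a resistance ball $B_R^{(N)}(x,\kappa)$ of uniform radius $\kappa > 0$, so that entire ball is covered (Proposition~\ref{Prop:Loc30}). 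Propositions~\ref{Prop:Covering} and~\ref{Prop:ERN} then cover $V(G^{(N)})$ by $L$ such resistance balls with $L$ \emph{independent of $N$}, so the union bound is over $L$ terms rather than $R_N^{d_f}$ vertices, and the log factor never enters. Your Markov iteration is then applied ball by ball (Corollary~\ref{Cor:Loc50}). The missing idea, in short, is to trade the union over vertices for a union over a bounded number of resistance balls, using local-time continuity to cover each ball in one stroke.
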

\noindent
Starting with all lamps off, 
namely at $Y_0 = \bm{x} := (\bm{0},x)$, 
on the event $\{ \sup_{0 \le s \le t}  d ( \tilde{X}_0, \tilde{X}_s )  \le  
\frac{1}{4} R_N \}$, 
all lamps outside $B^{(N)}(x,\frac{1}{4} R_N)$ 
are off at time $t$. Hence, then 
$\| P_{t}^{\ast}  ( \bm{x}, \cdot ; G^{(N)} ) - \pi^{\ast} (\cdot ; G^{(N)} ) \|_{\TV}$ is still far from $0$. 
Using this observation, we prove in Section \ref{Sec:Rec} 
the following uniform lower bound on the lamplighter chain 
distance from equilibrium at time $t \asymp T_N$.    
\begin{Proposition}\label{Prop:LBTV10}
If Assumptions \ref{Ass:Weight}, \ref{Ass:UPHI} hold,
then for some finite $c_1$, $N_1$, any $t$ and $N \ge N_1$,
\begin{align}\label{eq:lbd-tv}
\max_{\bm{x} \in V(\mathbb{Z}_2 \wr G^{(N)})} \,
\| P_{t}^{\ast}  ( \bm{x}, \cdot ; G^{(N)} ) - \pi^{\ast} (\cdot ; G^{(N)} ) \|_{\TV}  \ge   c_1^{-1} e^{- c_1 t/T_N} - \tilde{c} \, \cv R_N^{-d_f} \,.  
\end{align} 
\end{Proposition}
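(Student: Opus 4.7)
The proof strategy is indicated in the paragraph preceding the proposition. I would take the initial state $\bm{x} = (\bm{0}, x_0)$ with all lamps off and the walker at a vertex $x_0 \in V(G^{(N)})$ (to be chosen), and set $B := B^{(N)}(x_0, \eta R_N)$ for a suitably small fixed $\eta > 0$. The distinguishing event is
\[
D := \{\bm{y} = (f, y) \in V(\mathbb{Z}_2 \wr G^{(N)}) : f|_{V(G^{(N)}) \setminus B} \equiv 0\}.
\]
Because the switch-walk-switch dynamics only touches the lamp at the walker's current site, on the survival event $\{\tau_{B^c} > t\}$, where $\tau_{B^c} := \inf\{s \ge 0 : \tilde{X}_s^{(N)} \notin B\}$, we have $Y_t^{(N)} \in D$. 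By the elementary bound $\|\mu - \nu\|_{\TV} \ge \mu(D) - \nu(D)$, it then suffices to establish
\[
P_{\bm{x}}^{\ast}(Y_t^{(N)} \in D) \ge P_{x_0}(\tau_{B^c} > t) \ge c_1^{-1} e^{-c_1 t/T_N} \quad\text{and}\quad \pi^{\ast}(D) \le \tilde{c}\,\cv R_N^{-d_f}.
\]

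For the bound on $\pi^{\ast}(D)$, direct counting gives $\pi^{\ast}(D) = 2^{-(\sharp V(G^{(N)}) - \sharp B)}$; combining the $d_f$-set condition from Assumption \ref{Ass:Weight}(c) with Remark \ref{Rem:Card} yields $\sharp V(G^{(N)}) - \sharp B \ge c R_N^{d_f}$ provided $\eta$ is chosen small enough that the volume constants from the $d_f$-set bound beat $(\eta R_N)^{d_f}$; hence $\pi^{\ast}(D) \le 2^{-c R_N^{d_f}} \le \tilde{c}\,\cv R_N^{-d_f}$ for all $N \ge N_1$. The other, harder, bound reduces to the uniform-in-$t$ exponential lower estimate $P_{x_0}(\tau_{B^c} > t) \ge c_1^{-1} e^{-c_1 t/T_N}$ on the survival probability of the lazy walk in a ball of radius $\asymp R_N$, which is where Assumption \ref{Ass:UPHI} enters substantively.

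The main obstacle is establishing this exponential lower bound, for which I would appeal to the finite-ball heat kernel estimates adapted in Section \ref{Sec:Pre}. A clean route is spectral: let $\phi_1 > 0$ denote the principal Dirichlet eigenfunction of $B$ with eigenvalue $\lambda_1 \le C / T_N$ (the latter a Poincar\'e-type consequence of UPHI), so that the identity $E_{x_0}[\phi_1(\tilde{X}_t^{(N)}) \mathbf{1}_{\{\tau_{B^c} > t\}}] = (1-\lambda_1)^t \phi_1(x_0)$, combined with a Harnack-type lower bound $\phi_1(x_0) \ge c \|\phi_1\|_\infty$ for $x_0$ in the bulk of $B$, yields $P_{x_0}(\tau_{B^c} > t) \ge c(1-\lambda_1)^t \ge c' e^{-c'' t/T_N}$. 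A more elementary alternative is to iterate the Markov property at time scale $T_N$, using a uniform lower bound $P_y(\tau_{B^c} > T_N, \tilde{X}^{(N)}_{T_N} \in B') \ge q > 0$ for $y$ in an inner ball $B' \subset B$, itself a direct consequence of the sub-Gaussian heat kernel lower bounds implied by UPHI. Either way, the polynomial error $\tilde{c}\,\cv R_N^{-d_f}$ appearing in \eqref{eq:lbd-tv} is a loose surrogate for the actually exponentially small quantity $\pi^{\ast}(D)$, amply adequate for the proposition.
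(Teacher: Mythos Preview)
Your proposal is correct and follows the same overall architecture as the paper: start at $(\bm{0},x_0)$, pick a distinguishing set of lamp configurations that is rare under $\pi^{\ast}$ but certain to contain $Y_t^{(N)}$ whenever the lazy walk has not yet escaped a ball of radius $\asymp R_N$, and then lower-bound that survival probability by $c^{-1}e^{-ct/T_N}$.

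The one noteworthy difference is the choice of distinguishing event. You take $D=\{(f,y):f|_{V(G^{(N)})\setminus B}\equiv 0\}$ for a \emph{fixed} ball $B=B^{(N)}(x_0,\eta R_N)$, which makes $\pi^{\ast}(D)=2^{-(\sharp V(G^{(N)})-\sharp B)}$ exponentially small in $R_N^{d_f}$ and hence trivially below the polynomial error term. The paper instead uses the $x_0$-independent event $A^{\ast}_N=\{\exists\,y:\,f\equiv 0\text{ on }B^{(N)}(y,r_N)\}$ with $r_N\asymp(\log R_N)^{1/d_f}$, and controls $\pi^{\ast}(A^{\ast}_N)$ via a union bound over $y$; this is precisely where the specific constant $\tilde{c}\,\cv R_N^{-d_f}$ in the statement comes from. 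Your choice is arguably cleaner, while the paper's has the minor aesthetic advantage that the event does not depend on the starting vertex. For the survival bound itself, the paper has already packaged exactly what you need as Corollary~\ref{Cor:HKEN} (applied at $r=\frac{1}{4}R_N$, together with the obvious coupling showing the lazy walk's exit time dominates that of $X^{(N)}$), so you can simply cite it rather than rederive it via the spectral or iteration arguments you sketch.
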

\noindent
In Proposition \ref{Prop:LampTV} we bound the \abbr{lhs} of 
\eqref{eq:lbd-tv} by $\max_x P_x ( \tau_{\rm{cov}} (G^{(N)}) > t)$
provided $t/S_N$ is large (for $S_N$ of \eqref{eq:Not05}). 
Since $S_N  \asymp T_N$ when $d_f < d_w$,
contrasting Propositions \ref{Thm:UCT10} and \ref{Prop:LBTV10}
yields Theorem \ref{Thm:Main}(a) (c.f. Remark 
\ref{rem:mix} for information about 
$\MT(\epsilon; \mathbb{Z}_2 \wr G^{(N)})/\CT (G^{(N)})$ and lack of
concentration of $\tau_{\rm{cov}} (G^{(N)})/T_N$).

Propositions \ref{Prop:LBTV10} and 
\ref{Prop:LampTV} 
apply also when $d_w<d_f$, but in that case 
$\tau_{\rm{cov}} (G^{(N)}) \ge \sharp V(G^{(N)}) \gg T_N$, 
and the proof of Theorem \ref{Thm:Main}(b), provided in Section \ref{Sec:Tran}, 
amounts to verifying the sufficient 
conditions of \cite[Theorem 1.5]{MP12} 
for cutoff at $\frac{1}{2} T_{\rm{cov}} (G^{(N)})$. 
Indeed, the required 
uniform Harnack inequality follows from the 
\abbr{UPHI} of Assumption \ref{Ass:UPHI}, which 
as we see in Section \ref{Sec:qwExam} is more 
amenable to analytic manipulations than 
the Harnack inequality. 
(Yet, we note that quite recently 
stability of the (elliptic) Harnack inequalities is proved in \cite{BM18}.)  

\section{Cutoff in fractal graphs}   \label{Sec:qwExam} 
We provide here a few examples for which Theorem \ref{Thm:Main} applies, 
starting with the following. 
\begin{figure}[htbp]
  \begin{tabular}{ccc}
 \begin{minipage}{0.3\hsize}
  \begin{center}
   \includegraphics[width=10mm]{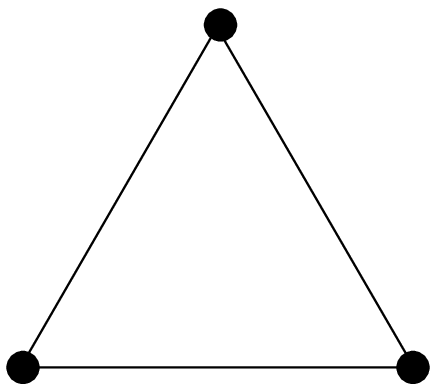}
  \end{center} 
 \end{minipage}
 \begin{minipage}{0.3\hsize}
  \begin{center}
   \includegraphics[width=20mm]{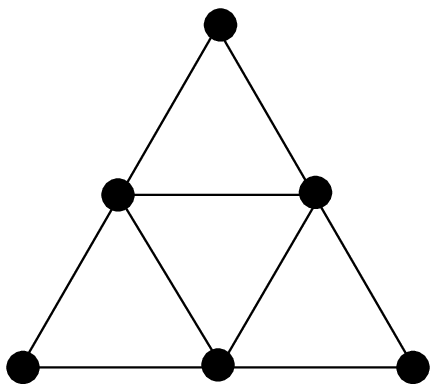}
  \end{center}
 \end{minipage} 
  \begin{minipage}{0.3\hsize}
  \begin{center}
   \includegraphics[width=30mm]{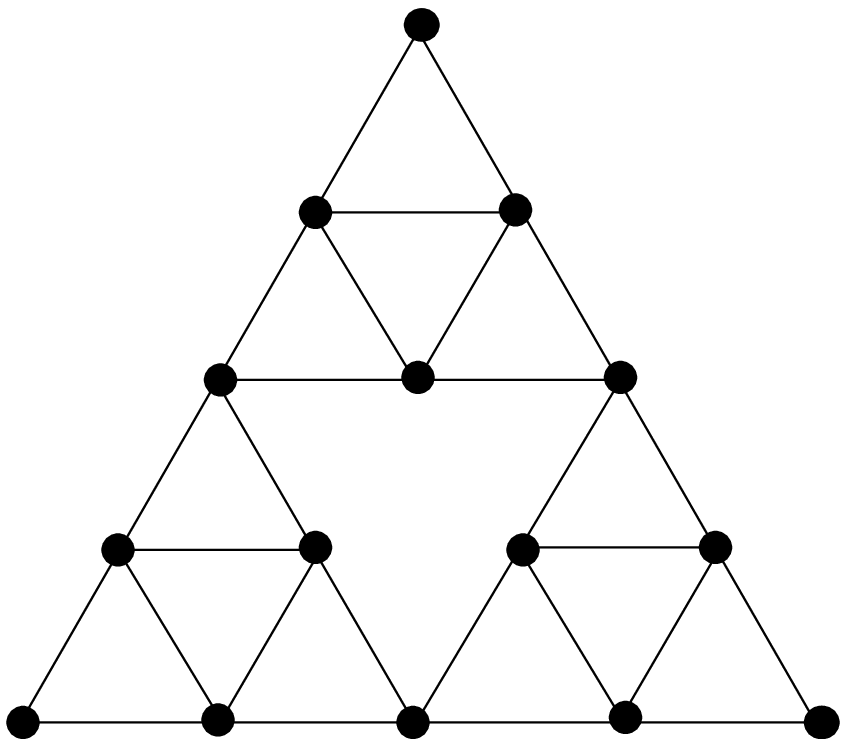}
  \end{center}
 \end{minipage} 
 \end{tabular}  
      \caption{ A sequence of the Sierpinski gasket graphs  ($G^{(0)}, G^{(1)}, G^{(2)}$ respectively).   }
\end{figure}

\begin{Example}[Sierpinski gasket graph in two dimension]\label{Ex:Gasket10}$~$ 
Let $G^{(0)}$ denote the equilateral triangle of side length $1$. That is,  
\begin{align*}
V(G^{(0)}) = \Big\{ x_0=(0,0),   x_1 = (1,0),  x_2 = \big( \frac{1}{2}, \frac{\sqrt{3} }{2} \big) \Big\}, 
\qquad  E(G^{(0)}) = \{ x_0 x_1,  x_0x_2,  x_1x_2 \}. 
\end{align*}      
Setting $\psi_i (x) := (x + x_i)/2$ for $i=0,1,2$, 
we define the graphs $\{G^{(N)}\}_{N \ge 1}$ via 
\[
V(G^{(N+1)}) =2 \cdot \Big(  \bigcup_{i=1}^3  \psi_i (V(G^{(N)}) )   \Big)~~
 \mbox{and}~~
E ( G^{(N+1)} ) =2 \cdot \Big(  \bigcup_{i=1}^3  \psi_i (E(G^{(N)}) )   \Big).
\] 
The limit graph $G = (V(G), E(G))$, where $V(G) = \cup_{N \ge 0} V(G^{(N)})$ 
and $E(G) = \cup_{N \ge 0} E(G^{(N)})$, is called the Sierpinski gasket graph. 
It is easy to confirm that if Assumption \ref{Ass:Weight}(a) holds for 
weight $\mu^{(N)}$ on $G^{(N)}$ then
such $\mu^{(N)}$ satisfies also Assumption \ref{Ass:Weight}(b) 
and Assumption \ref{Ass:Weight}(c) for $d_f = \log 3/\log 2$.
\end{Example}

We further prove in Section \ref{Subsec:pfSC} the following.
\begin{Proposition} \label{thm:SG12}
The weighted graphs $\{ (G^{(N)}, \mu^{(N)}) \}_{N \ge 0}$ 
of Example \ref{Ex:Gasket10} further satisfy Assumption \ref{Ass:UPHI}
with $d_w=\log 5/\log 2$. 
\end{Proposition}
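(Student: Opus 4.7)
The plan is to deduce the uniform parabolic Harnack inequality from established sub-Gaussian heat kernel estimates on the infinite Sierpinski gasket $G = \bigcup_N G^{(N)}$, then transfer these estimates to the finite approximations $G^{(N)}$ uniformly in $N$. On the infinite graph, two-sided sub-Gaussian bounds with volume exponent $d_f = \log 3/\log 2$ and walk exponent $d_w = \log 5/\log 2$ are classical (Jones 1996; Barlow--Bass 1999); under Assumption \ref{Ass:Weight}(a), standard comparison/rough-isometry arguments extend them to arbitrary uniformly elliptic weights. Combined with the volume doubling built into Assumption \ref{Ass:Weight}(c), the Grigor'yan--Telcs / Barlow--Bass--Kumagai equivalence then yields PHI on $G$ with walk exponent $d_w = \log 5/\log 2$.

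The remaining task is to transfer PHI from $G$ to $G^{(N)}$ with constants independent of $N$. The heat equation \eqref{eq:he-GN} is local: its validity on $B^{(N)}(x_0,2R)$ depends only on the weights attached to vertices of that ball. The sole vertices where the local structure of $G^{(N)}$ differs from $G$ are the three outer corners $\{2^N x_0, 2^N x_1, 2^N x_2\}$, which have degree $2$ in $G^{(N)}$ versus degree $4$ in $G$. Choosing $\cPHI$ small enough that any ball $B^{(N)}(x_0, 2R)$ with $R \le \cPHI \Diam \{ G^{(N)} \}$ contains at most one outer corner splits the verification into two cases. If no outer corner lies in the ball, then $P^{(N)}$ and $P$ agree there, so any solution $u$ on $B^{(N)}(x_0,2R)$ is also a solution on the corresponding ball in $G$, and PHI on $G$ applies verbatim. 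If exactly one outer corner $x_{\ast}$ lies in the ball, exploit the self-similar symmetry of the gasket: glue $G^{(N)}$ to an isomorphic copy of itself at $x_{\ast}$ (and only there), producing an enlarged graph $\widetilde G^{(N)}$ in which $x_{\ast}$ is an interior vertex of full degree $4$. A solution on $B^{(N)}(x_0,2R)$ extends symmetrically across $x_{\ast}$ to a solution of the discrete heat equation on the corresponding ball of $\widetilde G^{(N)}$, whose local geometry matches $G$, and PHI on $G$ applied to this extension delivers the required inequality for $u$.

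The main obstacle is the clean execution of the reflection argument at an outer corner: one must verify that the symmetric extension genuinely satisfies the discrete heat equation at $x_{\ast}$, which is true because the two neighbors of $x_{\ast}$ in the reflected copy contribute the same values as the original two, consistently accounting for the full degree after gluing. A perhaps cleaner alternative avoids the infinite graph altogether: verify directly on each $G^{(N)}$ a uniform Poincar\'e inequality $\lambda_1(B^{(N)}(x,R)) \asymp R^{-d_w}$ and a uniform cutoff Sobolev inequality. The Sierpinski gasket renormalization identity, by which three copies of $G^{(N)}$ assemble into $G^{(N+1)}$ with the Dirichlet form rescaling by factor $5/3$ and the invariant measure by $3$ per level, gives $\lambda_1 \asymp 5^{-N}$ and hence the required $R^{-d_w}$ scaling at all intermediate scales. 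Combined with volume doubling, the equivalence VD $+$ PI $+$ CSI $\Leftrightarrow$ PHI (Grigor'yan--Hu--Lau; Andres--Barlow) then yields Assumption \ref{Ass:UPHI} uniformly in $N$.
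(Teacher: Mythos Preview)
Your reflection approach is essentially the paper's. The paper reflects across both lines $\ell^{N,1}$, $\ell^{N,2}$ simultaneously, embedding $(G^{(N)},\mu^{(N)})$ into $G^{(N+1)}\subset G$, and is explicit about the one point your sketch leaves implicit: before invoking ``PHI on $G$'' one must extend $\mu^{(N)}$ to a uniformly elliptic weight $\mu'^{(N)}$ on all of $G$ (reflected on $G^{(N+1)}\setminus G^{(N)}$, unit weights beyond) and then apply the rough-isometry stability of PHI (Corollary~\ref{Cor:Stab20}) to obtain a Harnack constant depending only on $\ce$, $p_0$, $d_w$, hence uniform in $N$---which is precisely the content of Assumption~\ref{Ass:UPHI}.
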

In view of Proposition \ref{thm:SG12} and having $d_f<d_w$, we deduce
from Theorem \ref{Thm:Main}(a) that the total variation mixing time 
of the lamplighter chains of Example \ref{Ex:Gasket10}, admits no cutoff. 
 
\begin{Remark} For $d \ge 3$, the $d$-dimensional Sierpinski gasket graph 
is similarly defined, and by the same reasoning the corresponding 
lamplighter chains admit no 
mixing cutoff. In fact, one can deduce for a more general family 
of nested fractal graphs (see for instance \cite[Section 2]{HK04} 
for definition), that no cutoff applies. 
\end{Remark}

\begin{figure}[htbp]
  \begin{tabular}{ccc}
 \begin{minipage}{0.33\hsize}
  \begin{center}
   \includegraphics[width=20mm]{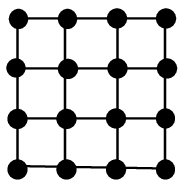}
  \end{center} 
 \end{minipage}
 \begin{minipage}{0.33\hsize}
  \begin{center}
   \includegraphics[width=30mm]{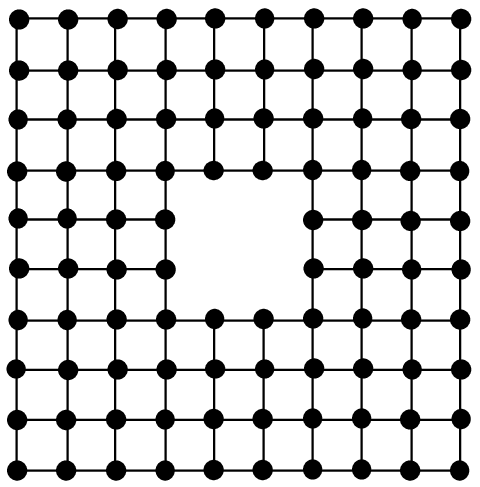}
  \end{center}
 \end{minipage} 
  \begin{minipage}{0.33\hsize}
  \begin{center}
   \includegraphics[width=40mm]{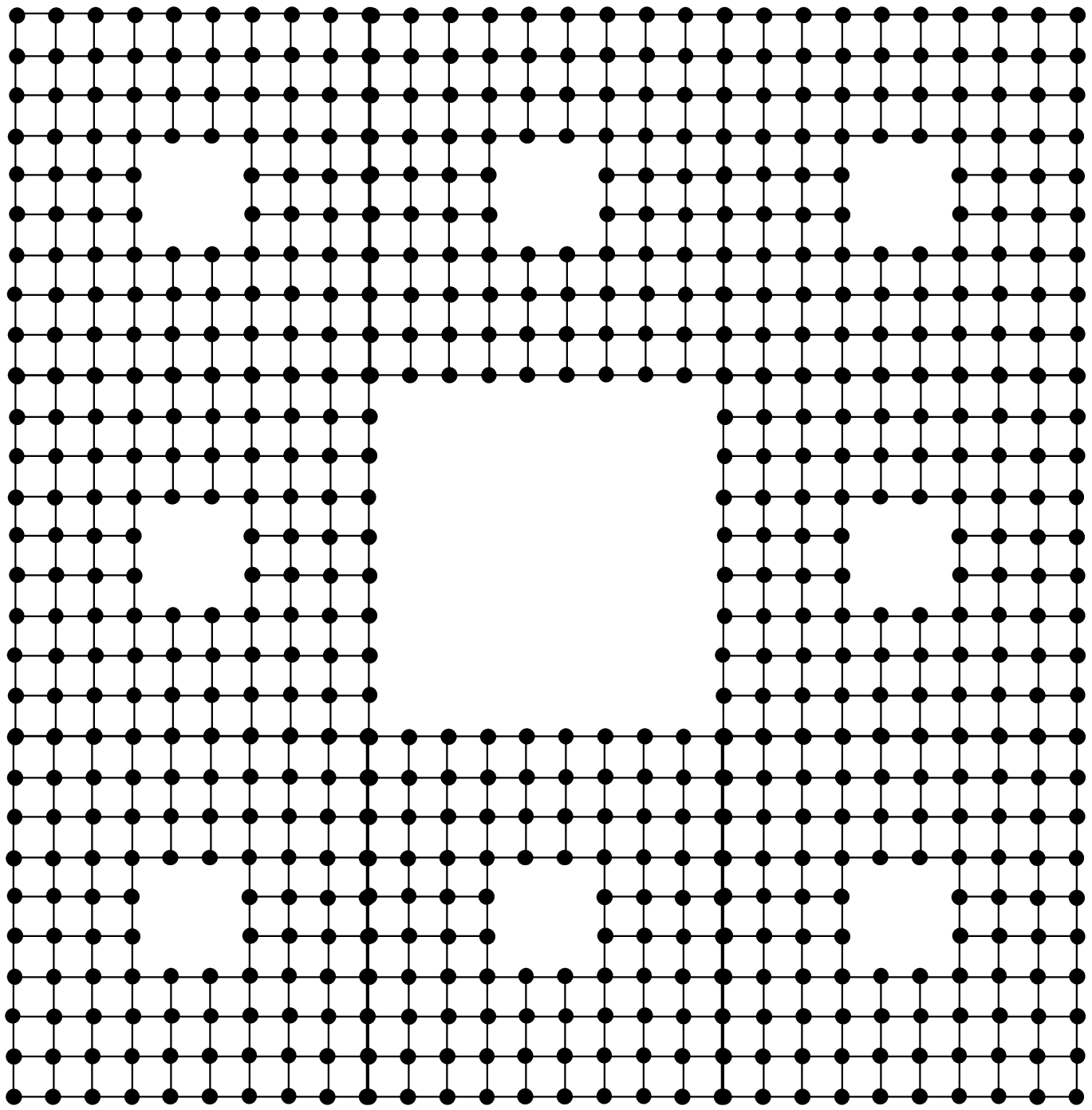}
  \end{center}
 \end{minipage} 
 \end{tabular}  
      \caption{ A sequence of the Sierpinski carpet graphs. }
\end{figure}

\begin{Example}[Sierpinski carpet graph]  \label{Ex:Carpet10}   
Fixing integers $L \ge 2$ and $K \in [L,L^d]$, partition the $d$-dimensional unit cube $H_0 = [0,1]^d$ into the  
collection  $Q := \{ \prod_{i=1}^d [ \frac{ (k_i -1) }{L} , \frac{ k_i  }{L} ]  \mid 1 \le k_i \le L \text{ for all } i \in \{ 1,2, \ldots , d \} \}$ of 
$L^d$ sub-cubes. Then fixing $L$-similitudes 
$\{ \psi_i, 1 \le i \le K\}$ of $H_0$ onto 
mutually distinct 
elements of $Q$, 
such that $\psi_1 (x): = L^{-1} x$, there exists a unique non-empty 
compact $F \subset H_0$ such that $F = \bigcup_{i=1}^K \psi_i (F)$. 
We call $F$ the generalized Sierpinski carpet if 
 the following four
conditions hold:

\smallskip
\noindent
(a) (Symmetry)  $H_1 := \bigcup_{i=1}^K \psi_i (H_0)$ is preserved by all  isometries of $H_0$.

\smallskip
\noindent
(b) (Connectedness) $\Int (H_1)$ is connected, and contains a path connecting the hyperplanes $\{ x_1 =0 \}$ and $\{ x_1 = 1\}$.

\smallskip
\noindent
(c) (Non-diagonality) 
If $\Int (H_1 \cap B)$ is nonempty for some $d$-dimensional cube 
$B \subset H_0$ which is the union of $2^d$ distinct elements of $Q$,
then $\Int (H_1 \cap B)$ is a connected set.

\smallskip
\noindent
(d) (Borders included) $H_1$ contains the line segment 
$\{ (x_1,0,\ldots,0) \mid 0 \le x_1 \le 1 \}$.     

\smallskip
\noindent

For a generalized Sierpinski carpet, let $V^{(0)}$ and $E^{(0)}$ denote the
the $2^d$ corners of $H_0$ and $d 2^{d-1}$ edges on the boundary of $H_0$ respectively, with 
\begin{align*}
V(G^{(N)}) := \bigcup_{i_1, i_2, \ldots, i_N=1}^K L^N \psi_{i_1, i_2, \ldots, i_N } (V^{(0)}), \qquad \qquad
                     V(G) := \bigcup_{N \ge 1} V(G^{(N)}) \,.    \\
E (G^{(N)}) :=  \bigcup_{i_1, i_2, \ldots, i_N=1}^K L^N \psi_{i_1, i_2, \ldots, i_N } (E^{(0)}), \qquad \qquad
                     E(G) := \bigcup_{N \ge 1} E(G^{(N)}) \,.    \\
  \end{align*}
Once again, it is easy to check that if Assumption \ref{Ass:Weight}(a) holds 
for weight $\mu^{(N)}$ on $G^{(N)}$, then
such $\mu^{(N)}$ satisfies also Assumptions \ref{Ass:Weight}(b) 
and \ref{Ass:Weight}(c) for $d_f = \log K/\log L$.
 \end{Example}  

We prove in Section \ref{Subsec:pfSC} the following.
\begin{Proposition}\label{thm:SC12}
For any generalized Sierpinski carpet, the 
weighted graphs $\{ (G^{(N)}, \mu^{(N)}) \}_{N \ge 0}$ 
of Example \ref{Ex:Carpet10} further satisfy Assumption \ref{Ass:UPHI}
for some finite $d_w = \log (\rho K)/\log L$. 
\end{Proposition}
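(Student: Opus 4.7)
The plan is to derive UPHI for the pre-Sierpinski carpet graphs $G^{(N)}$ from the Barlow--Bass theory of the generalized Sierpinski carpet, via the standard equivalence between sub-Gaussian heat kernel estimates (sub-GHKE$(d_w)$) and the parabolic Harnack inequality. The argument decomposes into two steps: (i) identifying the correct walk dimension $d_w$ through a resistance scaling theorem, and (ii) transferring the heat kernel estimates from the infinite limit graph to the finite approximations $G^{(N)}$ uniformly in $N$.

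The central input for (i) is the resistance scaling theorem of Barlow and Bass: under conditions (a)--(d), there is a unique constant $\rho = \rho(F) \ge 1$ such that the effective resistance $\rho_N := \ER^{(N)}(\{x_1 = 0\}, \{x_1 = 1\})$ across the $N$-th pre-carpet between its two opposite faces satisfies $\rho_N \asymp \rho^N$. Combined with the volume growth $\sharp V(G^{(N)}) \asymp K^N$ and the Einstein-type relation between mean exit time, volume and resistance, this pins down the walk dimension as
\begin{equation*}
d_w \;=\; \frac{\log (\rho K)}{\log L}.
\end{equation*}
Establishing this scaling requires all four conditions of Example \ref{Ex:Carpet10}: the symmetry condition (a) enables the Barlow--Bass shorting/cutting argument, the non-diagonality condition (c) is essential for the matching lower bound (naive series/parallel estimates only produce an upper bound of the desired form), and conditions (b), (d) enter to guarantee the requisite connectivity of crossing paths.

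For (ii), given the resistance scaling and Assumption \ref{Ass:Weight}, one obtains uniform sub-Gaussian heat kernel estimates on $G^{(N)}$ at all spatial scales $R \le \cPHI \Diam\{G^{(N)}\}$. Uniformity in $N$ comes from the self-similar construction, which makes all implied constants depend only on $F$ and $d$, combined with the margin $R \le \cPHI \Diam\{G^{(N)}\}$ in Assumption \ref{Ass:UPHI}, which keeps boundary effects of $G^{(N)}$ well away from the balls on which one applies PHI. Applying the standard graph-theoretic equivalence sub-GHKE$(d_w) \Leftrightarrow$ UPHI$(d_w)$ (see, e.g., \cite{Barlow17, Kumagai14}) then yields Assumption \ref{Ass:UPHI} with the stated $d_w$.

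The main obstacle is the resistance scaling $\rho_N \asymp \rho^N$ itself: this is the deepest analytic ingredient and it is carpet-specific, relying on the subtle Barlow--Bass symmetry/non-diagonality argument, and without it one would only have $d_w = d_f$ (i.e.\ diffusive behavior), which fails for generic carpets. Once that scaling is in hand, the remaining steps are essentially routine bookkeeping within the Barlow--Bass--Kumagai framework for sub-Gaussian heat kernels on graph approximations of fractals, with the minor technical caveat that one must verify that the truncation from the infinite graph $G$ to the finite graphs $G^{(N)}$ does not inflate the constants — this is handled by the $\cPHI$ margin together with the self-similar scaling of the carpet.
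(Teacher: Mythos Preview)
Your proposal correctly identifies the starting point --- the Barlow--Bass \abbr{HKE} on the infinite carpet graph $G$ with unit conductances, yielding (PHI($d_w$)) there via Theorem~\ref{Thm:Equiv} --- but it glosses over precisely the step that carries the real content of the proof. You write that the transfer from $G$ to the finite $G^{(N)}$ is ``handled by the $\cPHI$ margin together with the self-similar scaling,'' yet this is not a mechanism: a solution $u^{(N)}$ of the heat equation \eqref{eq:he-GN} on $(G^{(N)},\mu^{(N)})$ is \emph{not} a solution of the heat equation on the infinite graph, because the transition probabilities differ at boundary vertices of $G^{(N)}$. The paper resolves this by an explicit reflection construction: for each $N$ it extends $\mu^{(N)}$ to a weight $\mu'^{(N)}$ on all of $G$ (by reflecting across the faces of the copies of $G^{(N)}$ that build $G^{(N+1)}$, with suitable edge multiplicities on the reflection axes), and extends $u^{(N)}$ symmetrically to $\tilde u^{(N)}$. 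The point is that $\tilde u^{(N)}$ \emph{does} satisfy the heat equation for $(G,\mu'^{(N)})$ on the relevant cylinder, so that PHI on the infinite weighted graph applies and then restricts back to $B^{(N)}(y_0,R)$.

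A second gap is that the weights $\mu^{(N)}$ in the statement are arbitrary uniformly elliptic conductances, not the unit weights $\mu_{xy}\equiv 1$ underlying the Barlow--Bass estimates. Your resistance-scaling discussion pertains only to the latter. The paper bridges this by observing that the extended weighted graphs $\{(G,\mu'^{(N)})\}_N$ are uniformly rough isometric to the standard carpet $(G,\mu)$, and then invokes the stability of (VD)+(PI($d_w$))+(CS($d_w$)) under rough isometry (Theorem~\ref{Thm:Stab}, hence Corollary~\ref{Cor:Stab20}) to get PHI for $(G,\mu'^{(N)})$ with a constant independent of $N$. Without this stability step, your argument would at best establish UPHI for the simple random walk on the pre-carpets, not for general $\mu^{(N)}$ satisfying Assumption~\ref{Ass:Weight}(a).
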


Whereas directly verifying Assumption \ref{Ass:UPHI} is often difficult,
as shown in Section \ref{Sec:pfSGSC}, certain conditions from 
the research on \abbr{sub-GHKE} are equivalent to \abbr{PHI} and
more robust. Indeed those equivalent conditions are key 
to our proof of Propositions \ref{thm:SG12} and \ref{thm:SC12}.

In the context of Example \ref{Ex:Carpet10}, for carpets 
with central block of size $b^d$ removed (so 
$K=L^d - b^d$), for some $1 \le b \le L-1$, 
one always have $\rho > 1$ when $d=2$
(see \cite[\abbr{lhs} of (5.9)]{BB99}), 
hence by Theorem \ref{Thm:Main}(a) no cutoff
for the corresponding lamplighter chain. In contrast, 
from \cite[\abbr{rhs} of (5.9)]{BB99} we know that 
$\rho <1$ for high-dimensional carpets of small central 
hole (specifically, whenever $b^{d-1} < L^{d-1} - L$), 
so by Theorem \ref{Thm:Main}(b)
the corresponding 
lamplighter chains then 
admit cutoff at $a_N = \frac{1}{2} T_{\rm{cov}} (G^{(N)})$.

\subsection{Stability of heat kernel estimates and parabolic Harnack inequality}    \label{Sec:pfSGSC}

We recall here various stability results for Heat Kernel Estimates (\abbr{hke})
and Parabolic Harnack Inequalities (\abbr{PHI}), in case of 
a countably infinite weighted graph $(G,\mu)$. To this end, we assume 
\begin{itemize}
\item  \emph{Uniform ellipticity:} 
$\ce^{-1} \le \mu_{xy} \le \ce$  for some $\ce<\infty$ and 
            all $x y \in E(G)$,
\item  \emph{$p_0$-condition:} $\frac{\mu_{xy} }{\mu_x}  \ge p_0$ 
for some $p_0>0$ and all $x y \in E(G)$, 
    \end{itemize}  
and recall few relevant properties of such $(G,\mu)$.
\begin{Definition} Consider the following properties 
for $d_w \ge 2$ and $d_f \ge 1$:
 \begin{itemize}
       \item  (VD) There exists $\CD < \infty$  such that 
                    \begin{align*}
                                 V(x, 2r)   \le \CD V(x,r)  \qquad  \text{ for all $x \in V(G)$ and $r \ge 1$.}
                    \end{align*}   

       \item  (V($d_f$)) There exists $\CV < \infty$ such that 
                     \begin{align*}
\CV^{-1} r^{d_f}   \le  V(x,r)   \le \CV r^{d_f} \qquad  \text{ for all $x \in V(G)$ and $r \ge 1$.}
                    \end{align*}   

      \item  (CS($d_w$)) There exist $\theta >0$, $\CCS < \infty$ 
      and for each $z_0 \in V(G)$, $R \ge 1$ there exists 
      a cut-off function 
$\psi = \psi_{z_0,R} : V(G) \to \mathbb{R}$ such that:   

\smallskip\noindent
(a) $\psi (x) \ge 1$ when $d(x,z_0) \le R/2$, while $\psi (x) \equiv 0$ 
when $d(x,z_0) > R$, 
 
\smallskip\noindent
(b) $| \psi (x) - \psi (y) | \le \CCS  \left(d(x,y)/R\right)^{\theta}$,

\smallskip\noindent
(c) for any $z \in V(G)$, $f: B(z,2s) \to \mathbb{R}$ and $1 \le s \le R$ 
\begin{align*}
\sum_{x \in B(z,s)} f(x)^2 & 
\sum_{y \in V(G)} | \psi (x) - \psi (y) |^2 \mu_{xy} \\
&  \le  \CCS^2   \Big(  \frac{s}{R}  \Big)^{2\theta} 
                                        \Big(  \sum_{x,y \in B(z,2s)} |f(x) - f(y)|^2 \mu_{xy}  +  s^{-d_w}  \sum_{y \in B(z,2s)} f(y)^2  \mu_y \Big) .  
                                \end{align*}

     \item (PI($d_w$))  There exists $\CPI < \infty$ such that 
                \begin{align*}
                              \sum_{ x \in B(z,R) }  (f(x) - \bar{f}_{B(z,R)}  )^2  \mu_x  \le \CPI \, R^{d_w}  \sum_{x,y \in B(z,2R)} (f(x) - f(y))^2 \mu_{xy}
                \end{align*}
    for all $R \ge 1$, $x \in V(G)$ and $f : V(G) \to \mathbb{R}$, where $\bar{f}_{B(z,R)}  =\frac{1}{ V(z,R) } \sum_{x \in B(z,R)}  f(x) \mu_x$.

      \item (HKE($d_w$))  There exists $\CHK < \infty$ such that 
                 \begin{align*}  
                             p_t (x,y)  \le    \frac{ \CHK }{ V(x, t^{1 / d_w} ) } \exp \Big[  - \frac{1}{\CHK} \Big(  \frac{ d (x,y)^{d_w} }{ t }  \Big)^{1/(d_w -1) }  \Big] 
                  \end{align*}
               for all $x,y \in V(G)$ and $t \ge 0$, whereas 
                  \begin{align*}
                           p_t (x,y)  +  p_{t+1}  (x,y)  \ge    \frac{1}{\CHK V(x, t^{1 / d_w} )} \exp \Big[  - \CHK \Big(  \frac{ d (x,y)^{d_w} }{ t }  \Big)^{1/(d_w -1) }  \Big]
                  \end{align*}
                 for all $x,y \in V(G)$ and $t \ge d(x,y)$.  
  
     \item  (PHI($d_w$)) There exists $\CPHI < \infty$ such that if 
     $u : [0, \infty) \times V(G) \to [0,\infty )$ satisfies 
              \begin{align*}
                      u (t+1, x)  - u (t,x)   = (P - I) u(t,x),  \qquad 
                      \forall \, (t,x) \in [0,4T] \times B (x_0,2R)  
              \end{align*}
    for some $x_0 \in V(G)$, $T \ge 2R$ with $T \asymp R^{d_w}$, then, 
    for such $x_0,T,R$,
              \begin{align*}   
                       \max_{ \substack{ z \in B(x_0, R ) \\ s \in [T, 2T]} }  u(s,z )  
                             \le  \CPHI   \min_{ \substack{ z \in B (x_0,R ) \\ s \in [3T, 4T]} } \{ u(s,z) + u(s+1,z) \} .  
              \end{align*}  
 
  \end{itemize}
\end{Definition}

\begin{Theorem}[{\cite[Theorems 1.2, 1.5]{BB04}}] $~$  \label{Thm:Equiv}  
The following are equivalent for any uniformly elliptic, 
countably infinite 
$(G, \mu)$ satisfying the $p_0$-condition:  
          \begin{enumerate}  \renewcommand{\labelenumi}{(\alph{enumi})}
              \item  (VD), (PI($d_w$)) and (CS($d_w$)). 
              \item  (HKE($d_w$)).
              \item  (PHI($d_w$)).  
          \end{enumerate}          
  \end{Theorem}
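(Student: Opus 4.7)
The plan is to establish the cycle $(a) \Rightarrow (b) \Rightarrow (c) \Rightarrow (a)$.

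For $(a) \Rightarrow (b)$, I would first extract the on-diagonal upper bound $p_t(x,x) \le \CHK / V(x, t^{1/d_w})$ from a Nash/Faber--Krahn inequality derived by combining (VD) with the $d_w$-scaled (PI($d_w$)) via the Grigoryan--Saloff-Coste scheme. To upgrade to the sub-Gaussian off-diagonal bound I would run Davies' perturbation method with exponential weights $e^{\lambda \phi}$, but with (CS($d_w$)) playing the role that the chain rule plays in the Gaussian setting: it guarantees that the cutoff-induced extra energy scales like $s^{-d_w}$ times an $L^2$ mass, which is precisely what produces the exponent $(d(x,y)^{d_w}/t)^{1/(d_w-1)}$ after optimizing in $\lambda$. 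The matching lower bound then comes from a standard chaining argument: the on-diagonal upper bound combined with $p_{2t}(x,x) = \sum_y p_t(x,y)^2 \mu_y$ yields a near-diagonal lower bound $p_t(x,y) \gtrsim V(x,t^{1/d_w})^{-1}$ for $d(x,y) \le \eta t^{1/d_w}$, which is chained along $\lfloor d(x,y)/(\eta (t/k)^{1/d_w}) \rfloor$ intermediate midpoints via Chapman--Kolmogorov.

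The implication $(b) \Rightarrow (c)$ is the graph adaptation of Moser's argument: the two-sided (HKE($d_w$)) yields a parabolic mean value inequality and an oscillation decay at the correct space-time scale $T \asymp R^{d_w}$, which iterates into (PHI($d_w$)). For the reverse $(c) \Rightarrow (a)$, applying (PHI) to the caloric function $(t,z) \mapsto P_t(z,y)$ and to constants, then comparing successive scales, produces two-sided heat-kernel control from which (VD) follows by summing in $y$. A weak $L^2$-Poincar\'e inequality comes from the H\"older continuity of caloric functions provided by (PHI), and is bootstrapped to the strong (PI($d_w$)) via a Whitney-type covering of $B(z,R)$ together with a standard variance-decomposition/telescoping argument.

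The real obstacle, and the reason (CS) was isolated as a separate hypothesis in the first place, is deducing (CS($d_w$)) from (PHI($d_w$)). The difficulty is that (CS) demands cutoff functions whose energy exchange is controlled with the precise scaling $s^{-d_w}$ at \emph{every} intermediate radius $s \le R$, not merely at the outer scale as in the classical Gaussian theory. My plan is to construct $\psi_{z_0,R}$ as a truncation of the equilibrium potential for the annular resistance problem, whose scaling $\ER(B(z_0,r), B(z_0,2r)^c) \asymp r^{d_w-d_f}$ can be recovered from (PHI) together with (VD) and the Einstein relation. The H\"older property (b) should then follow from the regularity of harmonic functions under (PHI), while property (c) is obtained by partitioning $B(z,2s)$ into dyadic annuli and applying Harnack on each level set of $\psi$ to compare $|\psi(x)-\psi(y)|^2 \mu_{xy}$ against local $f$-differences plus the $s^{-d_w}$-weighted $L^2$ correction. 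Coordinating these bounds across all intermediate scales simultaneously is the technically delicate step where I would expect most of the real work to sit.
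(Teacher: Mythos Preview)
The paper does not prove this statement at all: Theorem~\ref{Thm:Equiv} is quoted verbatim from \cite[Theorems 1.2, 1.5]{BB04} and used as a black box, with only the remark that the constants in each implication depend only on $p_0$, $\ce$, $d_w$ and the assumed constants. There is therefore no ``paper's own proof'' to compare your proposal against.

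That said, your outline is a fair high-level sketch of the Barlow--Bass argument in \cite{BB04}. The cycle $(a)\Rightarrow(b)\Rightarrow(c)\Rightarrow(a)$, the use of Davies' method with (CS($d_w$)) replacing the chain rule to get the sub-Gaussian exponent, the chaining for the lower bound, and your identification of the derivation of (CS($d_w$)) from (PHI($d_w$)) as the genuinely hard step are all accurate. One caution: your proposed construction of the cutoff $\psi_{z_0,R}$ as a truncated equilibrium potential, together with the dyadic-annulus/level-set argument for property~(c), is more of a heuristic than a plan --- in \cite{BB04} this step is carried out through a rather delicate inductive construction across scales, and the ``coordinating these bounds across all intermediate scales'' that you flag at the end is not a detail but essentially the entire content of that paper. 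If you were actually asked to supply a proof here, the honest move would be to cite \cite{BB04} (and \cite{GT02} for the (b)$\Leftrightarrow$(c) direction), exactly as the present paper does.
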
 
Note that in each implication of Theorem \ref{Thm:Equiv} the resulting
values of $(\CD, \CPI,\theta,\CCS)$, $\CHK$ and $\CPHI$ 
depend only on $p_0$, $\ce$, $d_w$ and the assumed constants.
For example, in (PHI($d_w$))$ \Rightarrow $ (HKE($d_w$)), 
the value of $\CHK$ depends only on $\CPHI$, $p_0$, $\ce$ and $d_w$.  

\medskip
The stability of such equivalence involves the 
following notion of rough isometry (see \cite[Definition. 5.9]{HK04}). 
\begin{Definition}
Weighted graphs $(G^{(1)} , \mu^{(1)})$ and $(G^{(2)}, \mu^{(2)})$ are rough isometric if there exist $\CQI<\infty$ and 
a map $T : V^{(1)} \to V^{(2)}$ such that 
        \begin{align*}
         \CQI^{-1} \, d^{(1)} (x,y) - \CQI \le d^{(2)} (T(x),T(y))  & \le \CQI \, d^{(1)} (x,y) + \CQI \,,  \quad  & \forall x,y \in V^{(1)},   \\
                   d^{(2)} (x^{\prime}, T (V^{(1)})) & \le \CQI,    & 
                   \forall x^{\prime} \in V^{(2)},    \\  
                   \CQI^{-1} \, \mu_x^{(1)}   \le \mu_{T(x)}^{(2)}  & \le  \CQI \, \mu_x^{(1)},    & \forall x \in V^{(1)},  
        \end{align*}
where $d^{(i)}(\cdot,\cdot)$ and $V^{(i)}$ denote the graph distance and vertex set 
of $G^{(i)}$, $i=1,2$, respectively. Similarly, weighted graphs 
$\{ (G^{(N)}, \mu^{(N)} ) \}_{N}$  are uniformly rough isometric 
to a fixed, weighted graph $(G, \mu)$ if each 
$(G^{(N)}, \mu^{(N)})$ is rough isometric to $(G, \mu)$ 
for some $\CQI < \infty$ which does not depend on $N$.  
\end{Definition}  

Recall \cite[Lemma 5.10]{HK04}, that rough isometry is an 
equivalence relation. Further, (VD), (PI($d_w$)) and (CS($d_w$)) 
are stable under rough isometry. That is,
\begin{Theorem}[{\cite[Proposition 5.15]{HK04}}]   \label{Thm:Stab}
Suppose $(G^{(1)} , \mu^{(1)})$ and $(G^{(2)} , \mu^{(2)})$ 
have the $p_0$-condition and are rough isometric with constant $\CQI$. 
If $(G^{(1)} , \mu^{(1)})$ satisfies  (VD), (PI($d_w$)), (CS($d_w$)) 
with constants $(\CD,\CPI,\theta,\CCS)$, 
then so does $(G^{(2)},\mu^{(2)})$ with constants which depend only on 
     $(\CD,\CPI,\theta,\CCS)$, $d_w$, $p_0$ and $\CQI$. 
\end{Theorem}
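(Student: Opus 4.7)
The plan is to transfer each of (VD), (PI($d_w$)) and (CS($d_w$)) from $(G^{(1)},\mu^{(1)})$ to $(G^{(2)},\mu^{(2)})$ by pulling functions back along the rough isometry. Fix a choice of approximate inverse $S:V^{(2)}\to V^{(1)}$ satisfying $d^{(2)}(T(S(y)),y)\le \CQI$ (well defined by the covering property of $T$). Under the $p_0$-condition and uniform ellipticity, every ball of radius $O(\CQI)$ contains a uniformly bounded number of vertices and $\mu_x^{(i)}/\mu_y^{(i)}$ stays bounded whenever $d^{(i)}(x,y)=O(\CQI)$; combined with the measure comparison $\CQI^{-1}\mu_x^{(1)}\le \mu_{T(x)}^{(2)}\le \CQI\mu_x^{(1)}$, this means additive-$O(\CQI)$ errors in distances or summation ranges cost only multiplicative constants, which I absorb silently below.

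For (VD), given $x\in V^{(2)}$ and $r\ge 1$ pick $x'\in V^{(1)}$ with $d^{(2)}(T(x'),x)\le \CQI$. The distance distortion sandwiches $B^{(2)}(x,2r)$ and $B^{(2)}(x,r)$ between $T$-images of balls in $G^{(1)}$ centered at $x'$ whose radii differ by a factor $O(\CQI^2)$; iterating (VD) on $G^{(1)}$ a bounded number of times and applying measure comparison yields (VD) on $G^{(2)}$. For (PI($d_w$)), given $z\in V^{(2)}$, $R\ge 1$ and $f:V^{(2)}\to\mathbb{R}$, set $\tilde f:=f\circ T$ and apply the Poincar\'e inequality on $G^{(1)}$ to $\tilde f$ at $B^{(1)}(S(z),R')$ with $R'\asymp R/\CQI$. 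Measure comparison relates the variance term to $f$ on $B^{(2)}(z,R)$. The main technical ingredient is a chaining argument for the Dirichlet energy: for each edge $xy\in E(G^{(1)})$ fix once and for all a path in $G^{(2)}$ of length $\le C(\CQI)$ joining $T(x)$ to $T(y)$; writing $\tilde f(x)-\tilde f(y)$ as a telescoping sum along this path and applying Cauchy--Schwarz converts $\sum_{xy}(\tilde f(x)-\tilde f(y))^2\mu^{(1)}_{xy}$ into a controlled multiple of the Dirichlet energy of $f$ on $B^{(2)}(z,2R)$. The key combinatorial fact is that any edge $uv\in E(G^{(2)})$ appears in only a bounded number of these paths, by uniformly bounded degrees together with a uniform bound on $|\{x\in V^{(1)}:T(x)\in B^{(2)}(u,C(\CQI))\}|$.

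The most delicate step, and the one I expect to be the main obstacle, is (CS($d_w$)). Define $\psi^{(2)}(y):=\psi^{(1)}(S(y))$, where $\psi^{(1)}$ is the cutoff function on $G^{(1)}$ centered at $S(z_0)$ at a scale $R'\asymp R$ adjusted so that, under distortion by $\CQI$, the ``$d\le R/2$'' and ``$d>R$'' zones transfer to their analogues on $G^{(1)}$. Condition (a) is then immediate, and condition (b) follows from $d^{(1)}(S(x),S(y))\le \CQI d^{(2)}(x,y)+\CQI$ raised to the $\theta$-th power (the additive $\CQI$ is absorbed into a multiplicative constant). For inequality (c) one must convert three sums simultaneously: the double sum $\sum_x f(x)^2\sum_y|\psi^{(2)}(x)-\psi^{(2)}(y)|^2\mu^{(2)}_{xy}$, the Dirichlet energy of $f$, and the mass term $s^{-d_w}\sum f(y)^2\mu^{(2)}_y$. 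The first two are handled by the same chaining argument as for (PI), using condition (b) to control $\psi^{(2)}$-differences via $\psi^{(1)}$-differences along the chains; the mass term is immediate from measure comparison together with the (VD) just established. The hard part is carefully matching the radii $s, 2s, R/2, R$ through the distortion $\CQI$ so that the $(s/R)^{2\theta}$ factor on the right-hand side of (c) survives with constants depending only on $(\CD,\CPI,\theta,\CCS,d_w,p_0,\CQI)$.
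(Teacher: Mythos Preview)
The paper does not prove this theorem at all: it is stated as a citation of \cite[Proposition 5.15]{HK04} and used as a black box, so there is no ``paper's own proof'' to compare against. Your outline follows the standard route for such stability results (and is essentially what one finds in the cited reference): transfer (VD) via ball inclusions under the distance distortion, transfer (PI($d_w$)) by pulling back $f\mapsto f\circ T$ and converting Dirichlet energies via fixed short chains joining $T(x)$ to $T(y)$, and transfer (CS($d_w$)) by pushing forward the cutoff $\psi^{(1)}$ through an approximate inverse $S$. The key bookkeeping facts you invoke --- bounded degree from the $p_0$-condition, hence bounded edge-multiplicity in the chains, and the local comparability $\mu_x/\mu_y\in[p_0^k,p_0^{-k}]$ for $d(x,y)\le k$ --- are correct and suffice; note that you do not actually need uniform ellipticity here, only the $p_0$-condition, as the theorem statement reflects. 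Your identification of (CS($d_w$))(c) as the delicate step is accurate: matching the four radii $s,2s,R/2,R$ through the additive-plus-multiplicative distortion while retaining the $(s/R)^{2\theta}$ prefactor is where the work lies, and your sketch of handling the three sums separately is the right decomposition.
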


Combining Theorems \ref{Thm:Equiv} and \ref{Thm:Stab} 
we have the following useful corollary. 
 \begin{Corollary}  \label{Cor:Stab20}
Suppose uniformly elliptic weighted graphs $\{ (G^{(N)}, \mu^{(N)} ) \}_{N}$  
satisfy the $p_0$-condition and are uniformly rough isometric to 
some countably infinite uniformly elliptic  
$(G, \mu)$ that also has the $p_0$-condition. If
$(G, \mu)$ further satisfies (PHI($d_w$)), then so do 
$\{ (G^{(N)}, \mu^{(N)} ) \}_{N}$ with finite constant $\CPHI^{\prime}$ 
which is independent of $N$. 
  \end{Corollary}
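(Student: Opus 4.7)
The plan is to chain the two cited stability results in a straightforward three-step argument. First I would apply Theorem \ref{Thm:Equiv} on the base graph $(G,\mu)$: the assumed (PHI($d_w$)) yields (VD), (PI($d_w$)) and (CS($d_w$)) on $(G,\mu)$ with constants $(\CD,\CPI,\theta,\CCS)$ that depend only on $\CPHI$, $p_0$, $\ce$ and $d_w$.

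Next, since the rough-isometry constant $\CQI$ is uniform in $N$, Theorem \ref{Thm:Stab} transfers (VD), (PI($d_w$)) and (CS($d_w$)) from $(G,\mu)$ to each $(G^{(N)},\mu^{(N)})$ with new constants $(\CD^{\prime},\CPI^{\prime},\theta^{\prime},\CCS^{\prime})$ depending only on $(\CD,\CPI,\theta,\CCS)$, $d_w$, $p_0$, $\ce$ and $\CQI$, and hence independent of $N$. I would then apply the implication (a)$\Rightarrow$(c) of Theorem \ref{Thm:Equiv} to each $(G^{(N)},\mu^{(N)})$ individually, deducing (PHI($d_w$)) with a constant $\CPHI^{\prime}$ that depends only on $(\CD^{\prime},\CPI^{\prime},\theta^{\prime},\CCS^{\prime})$, $d_w$, $p_0$ and $\ce$, and hence is uniform in $N$.

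The one point requiring extra care, and what I expect to be the main (essentially only) obstacle, is that Theorems \ref{Thm:Equiv} and \ref{Thm:Stab} are formulated for countably infinite weighted graphs, while each $(G^{(N)},\mu^{(N)})$ is finite. I would handle this by observing that all four properties (VD), (PI($d_w$)), (CS($d_w$)) and (PHI($d_w$)) are local: they only constrain the graph structure inside balls of a prescribed radius. The proofs of both theorems accordingly restrict to such balls and carry through verbatim on a finite graph provided that the balls in question do not exceed the graph diameter — which is precisely the restriction $R \le \cPHI \Diam\{G^{(N)}\}$ already built into Assumption \ref{Ass:UPHI}. Tracking constants through each of the three steps then shows that $\CPHI^{\prime}$ depends only on $\CPHI$, $p_0$, $\ce$, $d_w$ and $\CQI$, completing the proof.
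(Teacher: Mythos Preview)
Your three-step chaining argument is exactly what the paper intends: it states the corollary as an immediate consequence of ``combining Theorems \ref{Thm:Equiv} and \ref{Thm:Stab}'' and gives no further proof.

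Your fourth paragraph, however, addresses a non-issue. A weighted graph that is rough isometric to a countably infinite graph must itself be infinite: if $V^{(1)}$ were finite then $T(V^{(1)})$ would have bounded diameter, and the condition $d^{(2)}(x',T(V^{(1)}))\le\CQI$ for all $x'\in V^{(2)}$ would force $V^{(2)}$ to be bounded as well. Hence each $(G^{(N)},\mu^{(N)})$ in the corollary is automatically countably infinite, and Theorems \ref{Thm:Equiv} and \ref{Thm:Stab} apply verbatim with no localization needed. Indeed, in the paper's actual applications (Propositions \ref{thm:SG12} and \ref{thm:SC12}) the corollary is invoked on the infinite weighted graphs $(G,\mu^{\prime(N)})$ constructed there, not on the finite graphs $G^{(N)}$ of Assumption \ref{Ass:Weight}; the passage back to the finite $G^{(N)}$ is handled separately via the reflection-extension argument.
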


\subsection{Proof of Propositions \ref{thm:SG12} and \ref{thm:SC12}}  \label{Subsec:pfSC}
 
\begin{proof}[Proof of Proposition \ref{thm:SG12}.] 
Recall that for random walks on the Sierpinski gasket, namely
$\mu_{xy} \equiv 1$ and the limit graph $G$ of Example \ref{Ex:Gasket10} 
(or its $d$-dimensional analog, $d \ge 3$),  
Jones \cite[Theorems 17,18]{Jones96} established (HKE($d_w$)),
 which by Theorem \ref{Thm:Equiv} implies that 
such $(G,\mu)$ must also satisfy (PHI($d_w$)).

\begin{figure}[htbp]
  \begin{tabular}{cc}
       \begin{minipage}{0.5\hsize}
            \begin{center}
                    \includegraphics[width=60mm]{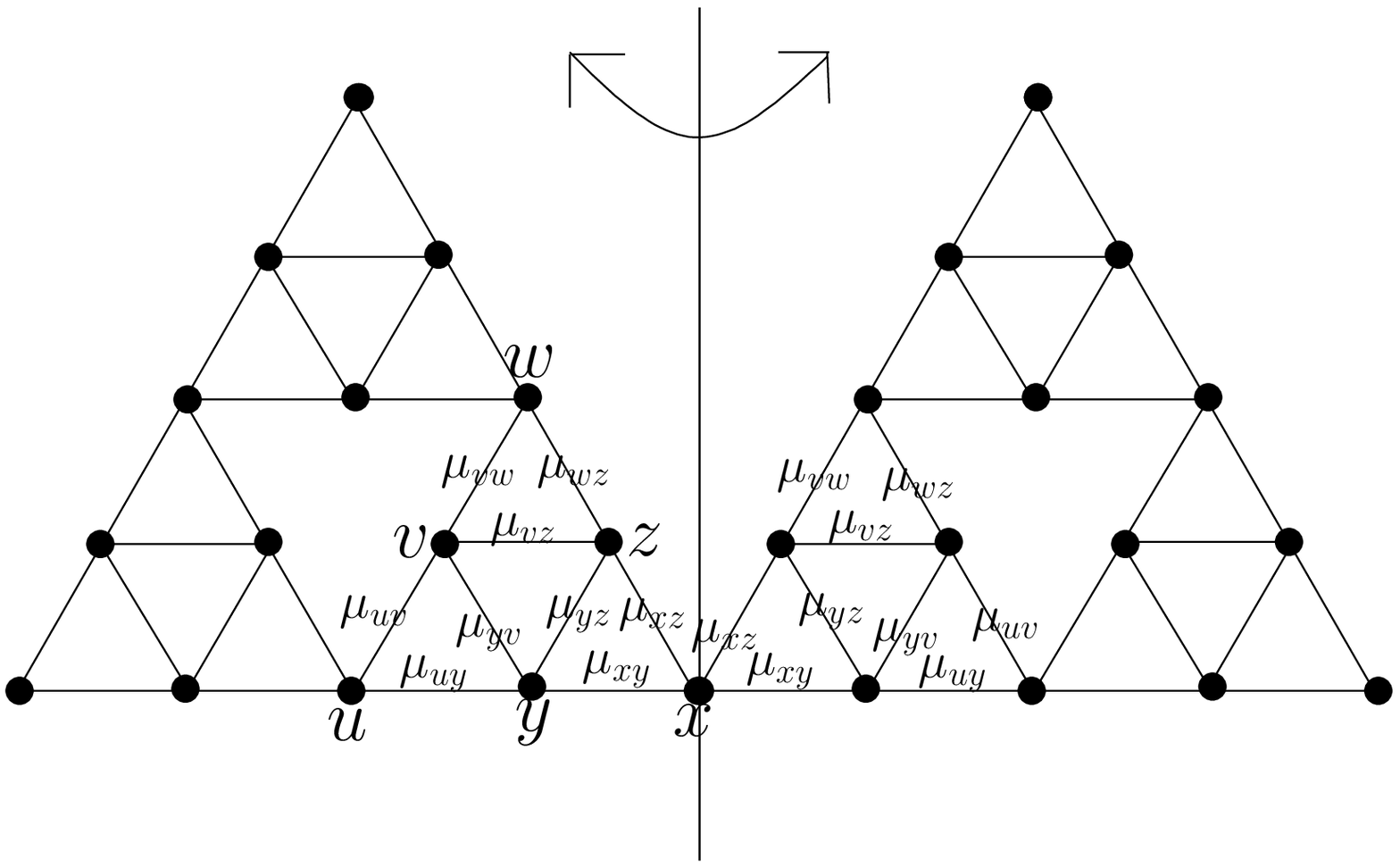}
            \end{center}
       \end{minipage} 
  \begin{minipage}{0.5\hsize}
           \begin{center}
                      \includegraphics[width=40mm]{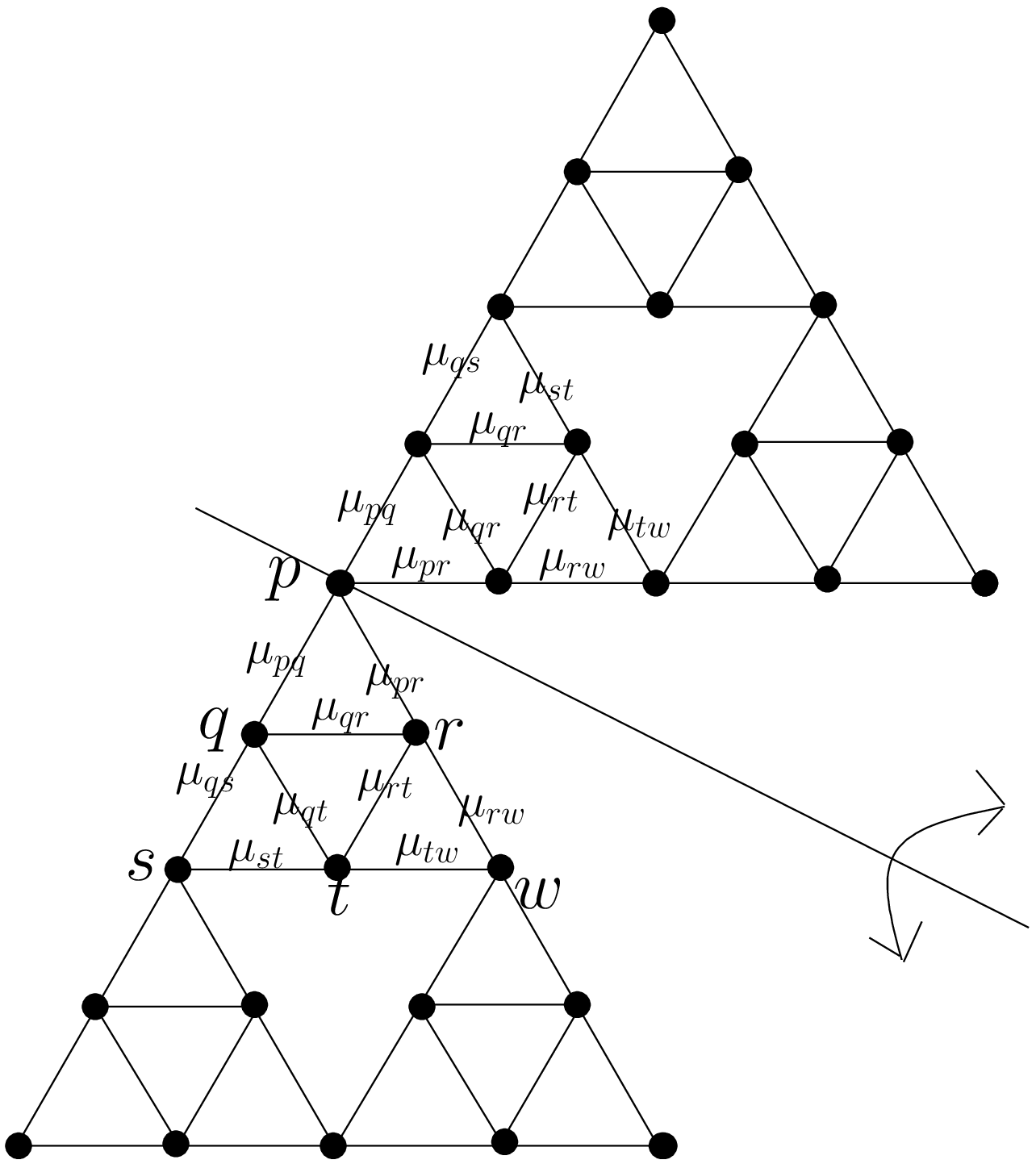}
           \end{center}
   \end{minipage} 
 \end{tabular}  
      \caption{ The construction of a weighted graph $(G^{(N+1)}, \mu^{(N+1)} )$ for a given  $(G^{(N)}, \mu^{(N)} )$. } 
       \label{Fig:Gasket30}
\end{figure}
                    
Proceeding to construct for each $N \ge 1$ a new 
weighted graph $(G, \mu^{\prime (N)})$, recall that 
$G^{(N+1)}$ consists of three copies $G^{(N,i)}$ of $G^{(N)}$, 
with $2^N x_i \in G^{(N,i)}$ for $i=0,1,2$. Note that 
$G^{(N,0)}=G^{(N)}$ whereas each 
$G^{(N,i)}$, $i=1,2$ is the reflection of $G^{(N,0)}$  
across a certain line $\ell^{N,i}$. Reflecting 
the weight $\mu^{(N,0)} := \mu^{(N)}$ on $G^{(N,0)}$, 
across $\ell^{N,i}$ yields weights $\mu^{(N,i)}$ on 
$G^{(N,i)}$, $i=1,2$ (see Figure \ref{Fig:Gasket30}). 
With $\{\mu^{(N,i)}, i=0,1,2\}$ forming a new 
weight on $G^{(N+1)} \subset G$, we thus set 
\begin{align*}   
\mu^{\prime (N)}_{xy}   : = 
                                 \begin{cases}
                                         \mu^{(N,i)}_{xy},  &  \text{ if $xy \in 
                                         E(G^{(N+1)})$,}   \\
                                          1 ,                 &  \text{otherwise}. 
                                  \end{cases}
                \end{align*}
Fixing a solution $u^{(N)} : [0, \infty) \times V(G^{(N)}) \to [0, \infty)$ 
of the heat equation \eqref{eq:he-GN} on the time-space 
cylinder of center $y_0 \in V(G^{(N)})$ and size
$2 R \le T \asymp R^{d_w}$, 
$R \le \frac{1}{4} R_N$,  
we extend $u^{(N)}(t,\cdot)$ to the non-negative function on $V(G)$ 
       \begin{align}    \label{eq:SGRef}
                       \tilde{u}^{(N)}  (t,x) := 
             \begin{cases}  
                   u^{(N)} (t,x),    & \text{ if $x \in V(G^{(N)})$},  \\   
                   u^{(N)} (t, x^{\prime}),  &  \text{ if $x \not\in V(G^{(N)})$ and $x^{\prime}$ are symmetric \abbr{w.r.t.} $\ell^{N,1}$ or $\ell^{N,2}$}, \\
                    0,    &  \text{ otherwise}. 
             \end{cases}
       \end{align}  
Having $R \le \frac{1}{4} R_N$
guarantees that $B_G (y_0,2R) \subseteq G^{(N+1)}$,
hence from our construction of $\mu^{\prime (N)}$ it follows that
$\tilde{u}^{(N)}(t,x)$ satisfy the heat equation 
corresponding to $(G,\mu^{\prime (N)})$ on the time-space cylinder
defined by $(y_0,R,T)$. Since $G$ has uniformly bounded degrees,
the weighted graphs $\{(G,\mu^{\prime (N)})\}_N$ satisfy a $p_0'$-condition 
(for some $p_0'>0$ independent of $N$). Further, $\{(G,\mu^{\prime (N)})\}_N$
are uniformly rough isometric 
to $(G, \mu)$ (thanks to the uniform ellipticity of $\mu^{(N)}$).
Hence, by Corollary \ref{Cor:Stab20}, for some $\CPHI' < \infty$,
which does not depend on $N$, nor on the specific 
choice of $y_0$, $R$ and $T$,
          \begin{align}  \label{eq:SGPHIN}
                   \max_{ \substack{ t \in [T,2T] \\ y \in B_G(y_0, R) } } \tilde{u}^{(N)} (t,y)  
                         \le \CPHI' \min_{ \substack{ t \in [3T,4T] \\ y \in B_G(y_0, R) } } \{ \tilde{u}^{(N)} (t,y) + \tilde{u}^{(N)} (t+1, y) \} .  
          \end{align} 
Since $\tilde{u}^{(N)}$ of \eqref{eq:SGRef} coincides with $u^{(N)}$ on 
$B^{(N)}(y_0,R) \subseteq B_G(y_0,R)$, replacing $\tilde{u}^{(N)}$ and
$B_G(y_0,R)$ in \eqref{eq:SGPHIN} by $u^{(N)}$ and $B^{(N)}(y_0,R)$, 
respectively, may only decrease its \abbr{lhs} and increase its \abbr{rhs}.
That is, \eqref{eq:SGPHIN} applies also for $u^{(N)}(\cdot,\cdot)$
and $B^{(N)}(y_0,R)$. This holds for all $N$ and any of the
preceding choices of $y_0,R,T$, yielding Assumption \ref{Ass:UPHI},
as stated.  
\end{proof}

\begin{proof}[Proof of Proposition \ref{thm:SC12}.]
Consider the random walk, namely $\mu_{xy} \equiv 1$, on 
a limiting graph $G$ that corresponds to a generalized 
Sierpinski carpet, as in Example \ref{Ex:Carpet10}. Clearly,
$(G,\mu)$ is uniformly elliptic and of uniformly bounded degrees
(so $p_0$-condition holds as well). Further, such random walk 
has properties (V($d_f$)) and (HKE($d_w$)), with 
$d_f = \log K / \log L \ge 1$ and 
$d_w = \log (\rho K)/\log L$ 
(see \cite{BB97}). In particular, by Theorem \ref{Thm:Equiv}
$(G,\mu)$ satisfies (PHI($d_w$)). With $G^{(N+1)}$  
consisting of $K$ copies of $G^{(N)}$, we 
extend the given weight $\mu^{(N)}$ on 
$G^{(N)}$ to a weight $\mu^{\prime (N)}$ on $G$. Specifically, 
the weight on the edges of the reflected part of $G^{(N)}$, 
as in Figure \ref{Fig:Carpet-Ref}, is $\mu^{\prime (N)}_{e} = K_e \mu^{(N)}_{e^{\prime}}$, where 
$K_{e} \in [1,K]$ is the number of overlaps of $e$, and 
$e^{\prime}$ is the edge which moves to $e$ by the reflection
(so in Figure \ref{Fig:Carpet-Ref}, we set $\mu^{\prime,(N)}_{e} = 2 \mu^{(N)}_e$ for each edge $e$ lying on a reflection axis). 
Taking $\mu^{\prime (N)}_{e} \equiv 1$ for all other $e \in E(G)$, 
the graphs $\{ (G, \mu^{\prime (N)}) \}_{N}$ are uniformly elliptic,
satisfy a $p_0'$-condition (for some $p_0'>0$ independent of $N$),
and are uniformly rough isometric to $(G, \mu)$. Thus, by 
Corollary \ref{Cor:Stab20} the
(PHI($d_w$)) holds for $\{ (G, \mu^{\prime (N)}) \}_{N}$ with a 
constant $\CPHI^{\prime}$ which does not depend on $N$. 
\begin{figure}[htbp]
  \begin{center}  
   \includegraphics[width=60mm]{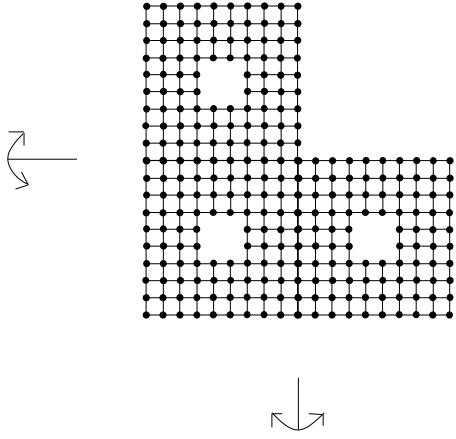}
  \end{center} 
      \caption{ An example of the reflection}   \label{Fig:Carpet-Ref}
\end{figure}
Fixing center $y_0 \in V(G^{(N)})$ and size parameters
$2 R \le T \asymp R^{d_w}$, 
$R \le \frac{1}{4} R_N$, we extend 
any given solution $u^{(N)} : [0,\infty) \times V(G^{(N)}) \to [0, \infty)$ 
of the heat equation \eqref{eq:he-GN} on the corresponding 
time-space cylinder, to the non-negative $\tilde{u}^{(N)} :
[0,\infty) \times V(G) \to [0,\infty)$, symmetrically 
along reflections, analogously to \eqref{eq:SGRef}. 
Since $R \le \frac{1}{4} \Diam \{G^{(N)}\}$ all edges
of $B_G (y_0,2R)$ not in $G^{(N)}$ are among those 
reflected to $G^{(N)}$, with our construction of $\mu^{\prime (N)}$ 
guaranteeing that $\tilde{u}^{(N)}(\cdot,\cdot)$ satisfy the heat equation 
on the corresponding time-space cylinder of $(G,\mu^{\prime (N)})$. 
Thanks to the (PHI($d_w$)) for $\{ (G, \mu^{\prime (N)}) \}_{N}$, 
we have \eqref{eq:SGPHIN}, and since
$\tilde{u}^{(N)}$ coincides with $u^{(N)}$ on 
$B^{(N)}(y_0,R) \subseteq B_G(y_0,R)$, 
the same applies when replacing $\tilde{u}^{(N)}$ and
$B_G(y_0,R)$ by $u^{(N)}$ and $B^{(N)}(y_0,R)$, 
respectively. As in our proof of Proposition \ref{thm:SG12}, this holds 
for all relevant values of $N$, $y_0$, $R$ and $T$, 
thereby establishing Assumption \ref{Ass:UPHI}.  
\end{proof}

\section{Random walk consequences of Assumptions \ref{Ass:Weight} and \ref{Ass:UPHI}}   \label{Sec:Pre}

We summarize here those consequences of Assumptions \ref{Ass:Weight} and \ref{Ass:UPHI} we need for Theorem \ref{Thm:Main}, 
starting with \abbr{sub-GHKE}, an upper bound on the 
uniform mixing times and a covering statement which are 
applicable for all values of $(d_f,d_w)$. Then, focusing in Section 
\ref{Subsec:Rec} on the case $d_f < d_w$, 
we control $R_{\rm{eff}}^{(N)} (\cdot, \cdot)$ and
relate it to $T_N$ of \eqref{eq:Time}, complemented in 
Section \ref{Subsec:Trans} by upper bounds on the Green functions, 
in case $d_f > d_w$. We provide only proof outlines since most 
of these results, and their proofs, are pretty standard.

\medskip\noindent
Our first result is the uniform \abbr{sub-GHKE} one has on
$\{(G^{(N)},\mu^{(N)})\}_{N \ge 1}$, up to time of order $T_N$.

\begin{Proposition}  \label{Prop:HKEN} 
Under Assumptions \ref{Ass:Weight} and \ref{Ass:UPHI}, for any
$\eta<\infty$, there exist 
$\cHK = \cHK (\eta ) < \infty$, such that for all $N$, any
$x,y \in V(G^{(N)})$ and $t \le \eta T_N$,
\begin{align}\label{eq:HKU}
                      p_t^{(N)} (x,y)  \le  \frac{\cHK}{t^{d_f/d_w}} 
\exp \Big[  -  \frac{1}{\cHK} \Big(  \frac{ d^{(N)} (x,y)^{d_w}}{t}  \Big)^{1/(d_w -1) }  
\Big] \,.
\end{align}
Further, for all $N$, any $x,y \in V(G^{(N)})$ and
$d^{(N)} (x,y) \le t \le \eta T_N$,
 \begin{align}\label{eq:HKL} 
 p_{t}^{(N)} (x,y)  + p_{t+1}^{(N)} (x,y) \ge  \frac{1}{\cHK t^{d_f/d_w}}   
 \exp \Big[  - \cHK \Big(  \frac{ d^{(N)} (x,y)^{d_w} }{ t }  \Big)^{1/(d_w -1) }  \Big] \,.  
 \end{align}
\end{Proposition}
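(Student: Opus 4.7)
The plan is to adapt the standard implication (PHI$(d_w)$) $\Rightarrow$ (HKE$(d_w)$) of Theorem \ref{Thm:Equiv} to the finite weighted graphs $(G^{(N)},\mu^{(N)})$, exploiting the local character of the PHI. The restriction $t\le \eta T_N=\eta R_N^{d_w}$ ensures that every space--time cylinder arising in the arguments has spatial radius $r\asymp t^{1/d_w}$ bounded by $\eta^{1/d_w}R_N$ and therefore lies within the regime $R\le \cPHI R_N$, $T\asymp R^{d_w}$ where Assumption \ref{Ass:UPHI} supplies a PHI with constants independent of $N$. All resulting constants will depend only on $(d_f,d_w,\ce,p_0,\cv,\CPHI)$ and on $\eta$.

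For the upper bound \eqref{eq:HKU}, I first derive the on-diagonal estimate $p_t^{(N)}(x,x)\le c\,t^{-d_f/d_w}$ by applying Assumption \ref{Ass:UPHI} to the nonnegative heat equation solution $(s,z)\mapsto p_s^{(N)}(x,z)$ on a cylinder of spatial radius $r=t^{1/d_w}$: this shows that the minimum of $p_t^{(N)}(x,\cdot)$ on $B^{(N)}(x,r)$ is at least $\CPHI^{-1} p_t^{(N)}(x,x)$, and combining with the conservation of mass $\sum_z P_t^{(N)}(x,z)=1$ and the lower volume bound $V^{(N)}(x,r)\ge \cv^{-1} t^{d_f/d_w}$ of Assumption \ref{Ass:Weight}(c) yields the bound. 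The off-diagonal exponential factor then follows from the classical Davies perturbation method---or equivalently from chaining the on-diagonal estimate through PHI along $n\asymp (d^{(N)}(x,y)^{d_w}/t)^{1/(d_w-1)}$ overlapping balls of radius $\asymp t^{1/d_w}$---exactly as in the Barlow--Bass infinite-graph proof.

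For the lower bound \eqref{eq:HKL}, the on-diagonal lower bound $p_t^{(N)}(x,x)\ge c\,t^{-d_f/d_w}$ comes from a Cauchy--Schwarz argument: the off-diagonal upper bound just obtained concentrates the mass of $p_t^{(N)}(x,\cdot)$ in $B^{(N)}(x,Ct^{1/d_w})$, whence $p_{2t}^{(N)}(x,x)=\sum_z (p_t^{(N)}(x,z))^2\mu_z^{(N)}$ is at least $c/V^{(N)}(x,Ct^{1/d_w})\ge c'\,t^{-d_f/d_w}$. Applying Assumption \ref{Ass:UPHI} to $(s,z)\mapsto p_s^{(N)}(z,y)$ then promotes this to the near-diagonal estimate $p_t^{(N)}(x,y)+p_{t+1}^{(N)}(x,y)\ge c\,t^{-d_f/d_w}$ whenever $d^{(N)}(x,y)\le c_1 t^{1/d_w}$. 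The general range $d^{(N)}(x,y)\le t$ is handled by the standard Barlow--Bass chain: cover a shortest geodesic by $n\asymp (d^{(N)}(x,y)^{d_w}/t)^{1/(d_w-1)}$ overlapping balls of radius $\asymp (t/n)^{1/d_w}$, split the time interval into $n$ equal pieces, and iterate the near-diagonal bound via Chapman--Kolmogorov so that the per-step multiplicative losses aggregate to the claimed exponential.

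The main obstacle is purely bookkeeping: one must verify at each step that every ball and cylinder invoked satisfies the admissibility conditions of Assumption \ref{Ass:UPHI}, namely $R\le \cPHI R_N$ and $T\asymp R^{d_w}$, so that the PHI applies with $N$-independent constants. This is precisely where the $\eta$-dependence of $\cHK$ enters: the number $n$ of chaining steps---and hence the number of times a bounded multiplicative loss accumulates---depends on $\eta$ through the ratio $t/T_N$, while the structural exponents $d_f/d_w$ and $1/(d_w-1)$ are determined entirely by $d_f$ and $d_w$ and are insensitive to $\eta$.
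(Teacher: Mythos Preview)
Your proposal is correct and follows essentially the same strategy as the paper's sketch: both adapt the standard implication (PHI$(d_w)$)$\Rightarrow$(HKE$(d_w)$) from the infinite-graph theory (Theorem~\ref{Thm:Equiv}) to the finite graphs $G^{(N)}$, using that the restriction $t\le\eta T_N$ keeps all relevant cylinders within the admissible range of Assumption~\ref{Ass:UPHI}. The only noteworthy difference is in the ordering of intermediate steps: the paper first establishes the near-diagonal lower bound for the \emph{killed} heat kernel (its \eqref{eq:KHKE}, following \cite{BGK12}) and uses it to obtain the exit-time tail needed for the off-diagonal upper bound, whereas you proceed directly from the on-diagonal upper bound to the full upper bound and only afterwards derive lower bounds via Cauchy--Schwarz; both orderings are standard and lead to the same result with $N$-independent constants depending on $\eta$.
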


\begin{proof} (Sketch:) This is a finite graph analogue of 
(PHI($d_w$))$ \Rightarrow $ (HKE($d_w$)) of Theorem \ref{Thm:Equiv}, which  
is standard for a 
countably infinite weighted graph (see 
\cite[Theorem 3.1, (ii) $\Rightarrow$ (i)]{GT02}). Such implication 
holds also for metric measure space with a 
local regular Dirichlet form, as 
\cite[Theorem 3.2 $(c^{\prime}) \Rightarrow (a'')$]{BGK12}, 
and we sketch below how to adapt the latter proof, 
specifically 
\cite[Sections 4.3 and 5]{BGK12}, to the finite graph setting. 
First note that for $t \le \eta T_N$ the derivation of the 
(near-)diagonal upper-bound \eqref{eq:HKU} (without the 
exponential term), follows as in the proof of \cite[Proposition 7.1]{Telcs01}. 
Setting $p_t^{(N,x,R)}$ for the heat kernel of the process 
killed upon exiting $B^{(N)}(x,R)$, upon 
adapting the arguments in \cite[Section 4.3.4]{BGK12}, one thereby
establishes the corresponding (near)-diagonal lower bound,  analogous to 
\cite[(4.63)]{BGK12}. Namely, showing that for some $\cPHI' \in (0,1)$ 
and $\cHK'=\cHK'(\eta')$ finite, any $\eta'<\infty$, 
all $N \ge 1$, $x \in V(G^{(N)})$ and $R \le \cPHI' \, R_N$,
if $\cHK' \, d^{(N)}(x,y) \le t^{1/d_w} \le \eta' R$, then 
\begin{align}  \label{eq:KHKE}
p_t^{(N,x,R)} (x,y) + p_{t+1}^{(N,x,R)}(x,y) \ge  \frac{1}{\cHK' t^{d_f /d_w}}  \,.
\end{align} 
Combining \eqref{eq:KHKE} and the (near-)diagonal upper bound, one then deduces
\eqref{eq:HKU} as done in \cite[Sections 4.3.5-4.3.6]{BGK12}. Similarly, 
by adapting the proof of \cite[Proposition 5.2(i) and (iii)]{BGK12}, 
the near-diagonal lower bound \eqref{eq:KHKE} yields the full lower-bound 
of \eqref{eq:HKL}. Since all these arguments involve only 
$\eta$ and the constants from Assumptions \ref{Ass:Weight}-\ref{Ass:UPHI},
we can indeed choose the constant $\cHK(\eta)$ 
in \eqref{eq:HKU}-\eqref{eq:HKL} independently of $N$.  
\end{proof}

Proposition \ref{Prop:HKEN} has the following immediate consequence.
\begin{Corollary}  \label{Cor:HKEN} 
Under Assumptions \ref{Ass:Weight} and \ref{Ass:UPHI} there 
exist $R_0$ and $c_2$ finite, such that 
for any $N \ge 1$, $x\in V(G^{(N)})$ and
$R_0 \le r \le R_N$
\begin{gather*}
P_x \big( \max_{0 \le j \le t} d^{(N)}(x,X_j^{(N)}) \le r \big)  
\ge c_2^{-1} \exp(-c_2 t/r^{d_w}) \,. 
\end{gather*} 
\end{Corollary}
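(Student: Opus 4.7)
The plan is to deduce the Corollary from Proposition \ref{Prop:HKEN}, specifically from the killed near-diagonal heat-kernel lower bound \eqref{eq:KHKE} established in its proof, combined with a Markov iteration. Fix a small constant $c_3>0$ and set $s_0 := \lfloor c_3 r^{d_w}\rfloor$ and $r_0 := \lfloor s_0^{1/d_w}/\cHK'\rfloor$, where $\cHK'$ denotes the constant appearing in \eqref{eq:KHKE}. I take $c_3$ small enough that $s_0^{1/d_w}\le \eta' r/2$ (so that \eqref{eq:KHKE} may be applied at any center $y\in B^{(N)}(x,r/2)$ with radius $r/2$, using $B^{(N)}(y,r/2)\subseteq B^{(N)}(x,r)$) and that $r_0\le r/8$; the threshold $R_0$ is then chosen so that $r\ge R_0$ guarantees $s_0\ge 1$ and $r_0\ge 4$. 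The range $r/2>\cPHI' R_N$ is handled by monotonicity, replacing $r$ by $2\cPHI' R_N$ and absorbing the loss into $c_2$.

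The core one-step estimate is the following. For $y\in B^{(N)}(x,r/2)$, monotonicity of the killed heat kernel together with \eqref{eq:KHKE} applied at center $y$ and radius $r/2$ yields
\[
p^{(N,x,r)}_{s_0}(y,z) + p^{(N,x,r)}_{s_0+1}(y,z) \;\ge\; \frac{1}{\cHK'\, s_0^{d_f/d_w}},\qquad z\in B^{(N)}(y,r_0).
\]
A short geometric argument then locates a sub-ball inside $B^{(N)}(y,r_0)\cap B^{(N)}(x,r/2)$: taking $z_0$ on a shortest $y$-to-$x$ geodesic at distance $\min(r_0/2,d^{(N)}(x,y))$ from $y$, one checks directly that $B^{(N)}(z_0,r_0/4)\subseteq B^{(N)}(y,r_0)\cap B^{(N)}(x,r/2)$. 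Summing the displayed inequality over this sub-ball and invoking Assumption \ref{Ass:Weight}(c) together with \eqref{eq:Card}, I obtain a universal constant $q>0$ with
\[
P_y\Bigl(\max_{0\le j\le s_0+1} d^{(N)}(x,X_j^{(N)})\le r,\;\; X^{(N)}_{\sigma}\in B^{(N)}(x,r/2)\text{ for some }\sigma\in\{s_0,s_0+1\}\Bigr) \;\ge\; q.
\]

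Iteration proceeds via the strong Markov property. Setting $\sigma_0:=0$ and, on the event that the one-step estimate succeeds at the $i$-th stage, letting $\sigma_{i+1}\in\{\sigma_i+s_0,\sigma_i+s_0+1\}$ be the first time thereafter at which the walker returns to $B^{(N)}(x,r/2)$, repeated application of the one-step estimate (each time with $X^{(N)}_{\sigma_i}$ playing the role of $y$) gives
\[
P_x\Bigl(\max_{0\le j\le k(s_0+1)} d^{(N)}(x,X_j^{(N)})\le r\Bigr)\;\ge\; q^k,\qquad k\ge 1.
\]
For general $t$: when $t\le s_0+1$ the bound is immediate provided $c_2\ge 1/q$; otherwise take $k=\lceil t/(s_0+1)\rceil$, so that $q^k\ge q\exp(-|\log q|\,t/(s_0+1))$, and since $s_0+1\asymp r^{d_w}$ this yields $P_x(\cdots)\ge c_2^{-1}\exp(-c_2 t/r^{d_w})$ for a suitable universal $c_2<\infty$.

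The main obstacle is the geometric content of the one-step estimate: securing a uniform lower bound $\mu^{(N)}(B^{(N)}(y,r_0)\cap B^{(N)}(x,r/2))\gtrsim r_0^{d_f}$ even for $y$ close to the boundary of $B^{(N)}(x,r/2)$, where one cannot simply use the $d_f$-set bound for $V^{(N)}(y,r_0)$. The geodesic construction of $z_0$ above guarantees room for a full ball of radius $r_0/4$ inside the intersection, at which point the $d_f$-set regularity (valid since $r_0/4\ge 1$) supplies the required volume bound. The remaining manipulations are routine applications of the Markov property.
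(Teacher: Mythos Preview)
Your argument is correct and follows the same overall architecture as the paper's: establish a uniform one-step estimate (the walk stays inside $B^{(N)}(x,r)$ for a time of order $r^{d_w}$ and lands back in $B^{(N)}(x,r/2)$ with probability bounded below), then iterate via the strong Markov property.  The difference lies in how the one-step bound is obtained.  The paper combines the full heat-kernel upper bound \eqref{eq:HKU} and lower bound \eqref{eq:HKL}, importing the finite-graph analogs of \cite[Lemmas 3.1, 3.4, 3.5]{KN16}; you instead feed the killed near-diagonal lower bound \eqref{eq:KHKE} directly into a volume estimate, together with a geodesic sub-ball construction to guarantee enough mass inside $B^{(N)}(y,r_0)\cap B^{(N)}(x,r/2)$.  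Your route is more self-contained and avoids the exit-time machinery, at the price of invoking a statement that sits inside the proof sketch of Proposition \ref{Prop:HKEN} rather than in its conclusion.

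One small imprecision: after $k$ successful stages you only control the walk up to time $\sigma_{k-1}+s_0+1\ge k s_0+1$, not $k(s_0+1)$ as displayed (since each $\sigma_{i+1}-\sigma_i$ may equal $s_0$).  This is harmless: replace $k(s_0+1)$ by $k s_0$ and take $k=\lceil t/s_0\rceil$, and the rest of your calculation goes through verbatim because $s_0\asymp r^{d_w}$.
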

\begin{proof} Using the same arguments as in the proof of \cite[Proposition 3.3]{KN16}, 
from \eqref{eq:HKU} and \eqref{eq:HKL} we get the finite graph analogs of 
\cite[Lemma 3.1]{KN16} and \cite[Lemma 3.4]{KN16}, respectively. Combining 
these bounds and the Markov property, as done in \cite[Lemma 3.5]{KN16}, 
results with the stated bound for $k [r^{d_w}] \le t < (k+1) [r^{d_w}]$.
All steps of the proof involve only our universal constants 
$\ce$, $\cv$, $p_0$, $\CPHI$, $\cPHI$, $\cHK$ and with 
$X_j^{(N)}$ confined to certain balls, having our \abbr{sub-HKE} 
restricted to $t \le \eta T_N$ is immaterial here.
\end{proof}

Another consequence of \eqref{eq:HKU} is 
the following upper bound on uniform mixing times.   
\begin{Proposition}  \label{Prop:MTE}
Suppose Assumptions \ref{Ass:Weight} and \ref{Ass:UPHI} hold.  
Then, for the invariant measures $\pi^{(N)}(\cdot)$ of \eqref{eq:InvUnder},
some finite $c(\cdot)$, all $N \ge 1$ and $\epsilon >0$, 
            \begin{align}\label{eq:Umix}
                     T_{\rm{mix}}^U (\epsilon; G^{(N)} ) 
:= \min  \Big\{  t \ge 0 \, \big|  \max_{x,y \in V(G^{(N)})} \Big| 
\frac{ P_x (\tilde{X}_t =y; G^{(N)})}{\pi^{(N)}(y)} - 1 \Big| \le \epsilon  \Big\}   
 \le c(\epsilon) T_N \, .  
\end{align} 
\end{Proposition}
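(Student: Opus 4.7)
My plan is to couple the uniform heat-kernel bounds of Proposition \ref{Prop:HKEN} with a spectral-gap / $L^2$-contraction argument, following the standard template that derives uniform mixing from two-sided \abbr{sub-GHKE}. Write $M_N := \mu^{(N)}(V(G^{(N)}))$; by Assumption \ref{Ass:Weight}(c) and Remark \ref{Rem:Card}, $M_N \asymp R_N^{d_f}$, and the bounds of Proposition \ref{Prop:HKEN} carry over to the lazy heat kernel $\tilde p^{(N)}$ up to a harmless rescaling of time and constants.

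\emph{Step 1 (spectral gap).} I would first establish a uniform spectral gap $\gamma_N \ge c/T_N$ for $\tilde P^{(N)}$, with $c$ independent of $N$. The natural route is a global Poincaré inequality at scale $R_N$: by the implication (HKE($d_w$)) $\Rightarrow$ (PI($d_w$)) of Theorem \ref{Thm:Equiv}, specialized to the finite graphs via Proposition \ref{Prop:HKEN} (with constants depending only on $\cHK$, $\ce$, $p_0$, and $d_w$), one obtains (PI($d_w$)) on balls $B^{(N)}(x_0,R)$ with $R \le \cPHI R_N$. Since $V(G^{(N)}) \subseteq B^{(N)}(x_0, R_N)$, applying this at radius $R_N$ controls the full variance of any $f : V(G^{(N)}) \to \mathbb{R}$, and Rayleigh--Ritz then yields $\gamma_N \ge c R_N^{-d_w} = c/T_N$.

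\emph{Step 2 (on-diagonal bound and spectral contraction).} Fix $\eta > 0$ and pick $s \asymp T_N$ with $2s \le \eta T_N$. From the on-diagonal case of (the lazy analogue of) \eqref{eq:HKU},
\[
\tilde p^{(N)}_{2s}(x,x) \le C_6 (2s)^{-d_f/d_w} \le C_7/M_N, \qquad \forall\, x \in V(G^{(N)}), \; N \ge 1.
\]
Since $\tilde P^{(N)}$ is reversible \abbr{w.r.t.} $\mu^{(N)}$ and, by laziness, has spectrum in $[0,1]$, there is an $L^2(\mu^{(N)})$-orthonormal eigenbasis $\{\phi_k\}$ with eigenvalues $1 = \beta_0 > \beta_1 \ge \cdots \ge 0$ and $\phi_0 \equiv M_N^{-1/2}$. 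For $t = 2s+r$, the spectral expansion $\tilde p^{(N)}_t(x,y) - 1/M_N = \sum_{k \ge 1} \beta_k^{2s+r} \phi_k(x) \phi_k(y)$, the bound $\beta_k \le \beta_1 = 1 - \gamma_N$ for $k \ge 1$, and Cauchy--Schwarz together give
\[
\bigl| \tilde p^{(N)}_t(x,y) - 1/M_N \bigr| \le (1-\gamma_N)^r \sqrt{A(x)A(y)},
\qquad A(z) := \tilde p^{(N)}_{2s}(z,z) - 1/M_N \le C_7/M_N.
\]
Multiplying by $M_N$ and using $P_x(\tilde X^{(N)}_t = y)/\pi^{(N)}(y) = M_N \,\tilde p^{(N)}_t(x,y)$ yields
\[
\max_{x,y \in V(G^{(N)})} \Bigl| \frac{P_x(\tilde X^{(N)}_t = y)}{\pi^{(N)}(y)} - 1 \Bigr| \le C_7 (1-\gamma_N)^r \le C_7 e^{-c r/T_N}.
\]
Taking $r := \lceil (T_N/c)\log(C_7/\epsilon)\rceil$ makes the right-hand side at most $\epsilon$, hence $T_{\rm{mix}}^U(\epsilon; G^{(N)}) \le 2s + r \le c(\epsilon) T_N$.

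\emph{Main obstacle.} The crucial point is the uniformity (in $N$) of the spectral-gap bound, which needs a (PI($d_w$))-type inequality on $G^{(N)}$ with $N$-independent constants. Since (PI($d_w$)) in Section \ref{Sec:pfSGSC} is formulated for infinite graphs, the cleanest route is to lift the \abbr{PI} from a countable rough-isometric model $(G,\mu)$ via Corollary \ref{Cor:Stab20} and the reflection constructions used in the proofs of Propositions \ref{thm:SG12} and \ref{thm:SC12}. An alternative route, bypassing $(G,\mu)$, is to derive $\gamma_N \ge c/T_N$ directly from a uniform Doeblin minorization $\tilde P^{(N)}_{t^*}(x,y) \ge c_3 \pi^{(N)}(y)$ at $t^* \asymp T_N$, which follows at once from \eqref{eq:HKL} since $d^{(N)}(x,y) \le R_N \le (t^*)^{1/d_w}$ once $t^* \ge T_N$. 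All remaining constants are $N$-independent because $\cHK(\eta)$ is.
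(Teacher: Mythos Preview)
Your argument is correct and reaches the same conclusion, but it proceeds differently from the paper. The paper bounds $T_{\rm{mix}}^U$ via the spectral-profile method of Goel--Montenegro--Tetali (Lemma~\ref{Lem:MTSP}): it first derives a Faber--Krahn inequality $\lambda_1^{(N)}(S) \ge \cFK \,\mu^{(N)}(S)^{-d_w/d_f}$ (Lemma~\ref{Lem:FKN}) from the on-diagonal upper bound \eqref{eq:HKU} alone, then plugs the resulting lower bound on the spectral profile $\tilde\Lambda^{(N)}(r)$ into the integral \eqref{eq:Umix-bd} to obtain $T_{\rm{mix}}^U(\epsilon) \le c(\epsilon) T_N$ directly, without ever isolating the spectral gap. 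You instead combine two separate ingredients---a spectral gap $\gamma_N \ge c/T_N$ (from the lower bound \eqref{eq:HKL} via Doeblin minorization) and the on-diagonal upper bound at time $2s \asymp T_N$---and glue them by the standard eigenfunction/Cauchy--Schwarz contraction.

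Both routes are sound here. The paper's method uses only the \emph{upper} heat-kernel estimate and is thus slightly more economical in its hypotheses; the spectral-profile machinery also generalizes more readily to settings where the volume growth is less regular than in Assumption~\ref{Ass:Weight}(c). Your approach is more elementary (no Faber--Krahn, no \abbr{GMT} integral), at the cost of invoking both the upper \emph{and} lower bounds of Proposition~\ref{Prop:HKEN}; since both are available, nothing is lost. Your caution about the \abbr{PI} route to the gap is warranted---Assumption~\ref{Ass:UPHI} only controls radii $R \le \cPHI R_N$, so a full-scale Poincar\'e inequality on $G^{(N)}$ is not immediate---and the Doeblin alternative you sketch is indeed the clean fix. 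One small point: \eqref{eq:HKL} bounds $p_t^{(N)} + p_{t+1}^{(N)}$ rather than $\tilde p_{t^*}^{(N)}$, so the minorization for the lazy kernel still relies on the ``lazy bounds carry over'' claim you asserted at the outset; this is routine via the binomial representation $\tilde p_t^{(N)} = \sum_s q_t(s) p_s^{(N)}$, but is worth making explicit.
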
  

For the proof of Proposition \ref{Prop:MTE}, 
consider the normalized Dirichlet forms of 
$\tilde X^{(N)}$ and $X^{(N)}$,
\begin{align*}  
\mathcal{E}_{\rm{norm} }^{(N)} (f, f)   &  := -  \langle f, (P^{(N)} - I) f \rangle_{ \pi^{(N)} },      \\    
\tilde{ \mathcal{E} }_{\rm{norm} }^{(N)} (f, f)   & := -  \langle f, (\tilde{P}^{(N)} - I) f \rangle_{ \pi^{(N)} } = \frac{1}{2}   \mathcal{E}_{\rm{norm} }^{(N)} (f, f) \,.     
\end{align*} 
Let
${\mathcal H}_0^{+} (S)  
:= \{ f: V(G^{(N)}) \to [0, \infty ) \mid f$ not a constant function, 
$\Supp \{ f \} \subseteq S \}$ for 
$S \subseteq V(G^{(N)})$ and define the spectral quantities 
\begin{align*}
\lambda^{(N)} (S)  :=  \inf \left\{   \frac{ \mathcal{E}_{\rm{norm} }^{(N)} (f, f) }{  \Var^{\pi^{(N)}} (f)  } 
                    \Biggm|  
                    f \in {\mathcal H}_0^{+} (S)  \right\},   \;\; 
\tilde{ \lambda }^{(N)} (S)  :=  \inf \left\{   \frac{ \tilde{ \mathcal{E} }_{\rm{norm} }^{(N)} (f, f)  }{  \Var^{\pi^{(N)}} (f)  }  
\Biggm| f \in {\mathcal H}_0^{+} (S)  \right\} = 
\frac{1}{2}  \lambda^{(N)} (S) \,.
\end{align*}  
Recall the following upper bound on uniform mixing times 
in terms of the corresponding spectral profile. 
\begin{Lemma}[{\cite[Corollary 2.1]{GMT06}}]   \label{Lem:MTSP}  
For 
$r \ge \pi_{\ast}^{(N)} :=  \inf_{x \in V(G^{(N)})} \{ \pi^{(N)} (x)\}$, let
\begin{align*}
                     \tilde{ \Lambda}^{(N)} (r) :=   \inf
                      \left\{  \tilde{\lambda}^{(N)} (S) \,|\,
\pi^{(N)} (S) \le r                      
                       \right\} \,.
\end{align*}
Then, for any $\epsilon >0$ and all $N$, 
   \begin{align}\label{eq:Umix-bd}
           T_{ \rm{mix} }^U (\epsilon ; G^{(N)} ) \le  
           \int_{4 \pi_{\ast}^{(N)} }^{4/\epsilon} \;\; \frac{4 \, dr}{r \tilde{\Lambda}^{(N)} (r) } \,. 
   \end{align} 
\end{Lemma}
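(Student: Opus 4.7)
The plan is to follow the spectral-profile method of Goel--Montenegro--Tetali. I work with the density
$f_t^{(x)}(y) := \tilde P^{(N)}_t(x,y)/\pi^{(N)}(y)$
of the lazy chain at time $t$ started from $x$, relative to its invariant measure. Since $\tilde P^{(N)}$ is self-adjoint on $L^2(\pi^{(N)})$, reversibility gives
$f_{2t}^{(x)}(y) - 1 = \langle f_t^{(x)} - 1, f_t^{(y)} - 1\rangle_{\pi^{(N)}}$,
and Cauchy--Schwarz then yields
\[
\max_{x,y\in V(G^{(N)})} \Big|\frac{\tilde P^{(N)}_{2t}(x,y)}{\pi^{(N)}(y)} - 1\Big| \;\le\; \max_x \|f_t^{(x)} - 1\|_{L^2(\pi^{(N)})}^2 \;=\; \max_x \big(f_{2t}^{(x)}(x)-1\big).
\]
Thus, if one shows that $v(t) := \|f_t^{(x)} - 1\|_{L^2(\pi^{(N)})}^2 \le \epsilon$ for every starting vertex $x$ at some time $t^\ast$, then $T_{\mathrm{mix}}^U(\epsilon; G^{(N)}) \le 2 t^\ast$, and the task reduces to controlling $v(t)$.

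Fix $x$ and write $f_t = f_t^{(x)}$. A one-step decrement inequality for $v$ comes from
\[
v(t+1) - v(t) \;=\; -\langle f_t, \big(I - (\tilde P^{(N)})^2\big) f_t\rangle_{\pi^{(N)}},
\]
together with the operator inequality $(\tilde P^{(N)})^2 \le \tilde P^{(N)}$, which holds because $\tilde P^{(N)} = (I + P^{(N)})/2$ is self-adjoint with spectrum in $[0,1]$. Since constants lie in the kernel of $\tilde{\mathcal{E}}_{\rm{norm}}^{(N)}$, this gives
\[
v(t+1) - v(t) \;\le\; -\tilde{\mathcal{E}}_{\rm{norm}}^{(N)}(f_t - 1, f_t - 1).
\]

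The heart of the argument is to lower bound this Dirichlet form by $v(t)$ times the spectral profile at the right scale. For a truncation level $c > 1$, the function $h_c := (f_t - c)_+$ is non-negative and supported on $S_c := \{y : f_t(y) > c\}$, with $\pi^{(N)}(S_c) \le 1/c$ by Markov (using $\int f_t\,d\pi^{(N)} = 1$). A standard contraction-under-truncation inequality for Dirichlet forms gives $\tilde{\mathcal{E}}_{\rm{norm}}^{(N)}(h_c, h_c) \le \tilde{\mathcal{E}}_{\rm{norm}}^{(N)}(f_t, f_t)$, and since $h_c \in \mathcal{H}_0^+(S_c)$ (assuming it is non-constant), monotonicity of $\tilde\Lambda^{(N)}$ in $r$ combined with the definition of $\tilde\lambda^{(N)}$ yields
\[
\tilde{\mathcal{E}}_{\rm{norm}}^{(N)}(f_t - 1, f_t - 1) \;\ge\; \tilde\Lambda^{(N)}(1/c)\,\Var^{\pi^{(N)}}(h_c).
\]
Choosing $c$ as a specific function of $v(t)$---tuned so that $1/c$ becomes the correct argument of $\tilde\Lambda^{(N)}$---and expanding $(f_t - c)^2 = h_c^2 + (c - f_t)_+^2$ while using $\int f_t\,d\pi^{(N)} = 1$ and the Cauchy--Schwarz bound $(\int h_c\,d\pi^{(N)})^2 \le \pi^{(N)}(S_c)\int h_c^2\,d\pi^{(N)}$, one checks algebraically that $\Var^{\pi^{(N)}}(h_c) \ge \alpha\, v(t)$ for an explicit constant $\alpha > 0$, whenever $v(t)$ exceeds a suitable threshold. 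This produces the key discrete differential inequality $v(t+1) - v(t) \le -\alpha\, v(t)\,\tilde\Lambda^{(N)}(\beta/v(t))$.

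By monotonicity of $\tilde\Lambda^{(N)}$, comparing with the ODE $\dot v = -\alpha v\,\tilde\Lambda^{(N)}(\beta/v)$ and separating variables gives, for the smallest $t$ with $v(t) \le \epsilon$, the bound $t \le \int_\epsilon^{v(0)} du/(\alpha u\,\tilde\Lambda^{(N)}(\beta/u))$. Substituting $r = \beta/u$ converts this into $\int_{\beta/v(0)}^{\beta/\epsilon} dr/(\alpha r\,\tilde\Lambda^{(N)}(r))$, and using $v(0) = \pi^{(N)}(x)^{-1} - 1 \le 1/\pi_*^{(N)}$, so $\beta/v(0) \ge \beta\pi_*^{(N)}$, together with $T_{\mathrm{mix}}^U(\epsilon;G^{(N)}) \le 2t$ from Step 1, produces the stated bound once the constants $\alpha, \beta$ are tracked to give exactly $4$ and $4$ in the integrand and lower limit. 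The main obstacle will be precisely the algebraic choice of $c$ and bookkeeping in Step 3: ensuring $\Var^{\pi^{(N)}}(h_c) \ge \alpha\, v(t)$ at the precise scale producing the factor $4$ in the statement, together with a careful handling of the regime where $v(t)$ is near $\epsilon$ (there the simple choice $c \propto 1/v$ would violate $c > 1$ and must be adjusted, but this regime is reached only right before termination and contributes negligibly to the integral).
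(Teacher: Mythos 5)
The paper does not prove this lemma at all: it is quoted verbatim from Goel--Montenegro--Tetali \cite[Corollary 2.1]{GMT06}, and your proposal is a faithful reconstruction of exactly that argument ($L^2$-reduction via reversibility and Cauchy--Schwarz, the one-step decrement using $(\tilde P^{(N)})^2 \le \tilde P^{(N)}$, the truncation lemma bounding the Dirichlet form below by $\tilde\Lambda^{(N)}(\cdot)$ times the variance, and the ODE comparison with the substitution $r=\beta/u$). Two small points of bookkeeping: you presumably mean $c\propto v$ (so that $1/c\propto 1/v$ is the argument of $\tilde\Lambda^{(N)}$), not $c\propto 1/v$; and the regime $v(t)\lesssim 1$ is not handled because it ``contributes negligibly'' --- it can dominate the integral when $\epsilon$ is small --- but rather because $\tilde\Lambda^{(N)}(r)$ is constant for $r\ge 1$ (equal to the spectral-gap quantity), so there the plain Poincar\'e bound $v(t+1)-v(t)\le -\tilde\Lambda^{(N)}(1)\,v(t)$ applies with no truncation, which is precisely what the upper part $\int_{1}^{4/\epsilon}$ of the stated integral encodes.
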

\noindent
Our next lemma controls the spectral profiles on the 
\abbr{rhs} 
of \eqref{eq:Umix-bd} 
en-route to Proposition \ref{Prop:MTE}.
\begin{Lemma}[Faber-Krahn inequality]  \label{Lem:FKN}
For any $N$ and $S \subseteq V(G^{(N)})$ let
\begin{align}\label{eq:Spec}  
\lambda_1^{(N)} (S)  :=   \inf \Big\{  \, \frac{ \mathcal{E}^{(N)} (f,f) }{ \| f \|_{L^2 (\mu^{(N)} )  }  }  
            \,\Big| \,
              f \in {\mathcal H}_0 (S)  \Big\}\,,  
\end{align}
where ${\mathcal H}_0 (S) 
:= \{ f : V(G^{(N)}) \to \mathbb{R} \mid \Supp \{f\} \subseteq S \}$.
If Assumptions \ref{Ass:Weight} and \ref{Ass:UPHI} hold, then for 
some $\cFK>0$ and all $N$, 
\begin{align}  \label{Prop:FKN02}
              \lambda_1^{(N)} (S)  \ge   \cFK \, \mu^{(N)} (S)^{-d_w / d_f}   
               \qquad   \forall S \subseteq V(G^{(N)})\,.        
\end{align}
\end{Lemma}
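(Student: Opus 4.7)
The plan is to deduce the Faber--Krahn inequality \eqref{Prop:FKN02} from a Nash-type inequality, itself obtained from the uniform on-diagonal heat kernel upper bound of Proposition \ref{Prop:HKEN}. Specializing \eqref{eq:HKU} to $y=x$ yields $p_t^{(N)}(x,x) \le \cHK t^{-d_f/d_w}$ uniformly in $x \in V(G^{(N)})$, $N \ge 1$ and $1 \le t \le \eta T_N$. By the standard symmetry/Cauchy--Schwarz argument $p_t^{(N)}(x,y)^2 \le p_t^{(N)}(x,x)p_t^{(N)}(y,y)$ together with $P_t f(x) = \langle p_t^{(N)}(x,\cdot), f\rangle_{\mu^{(N)}}$, one obtains an $L^1 \to L^\infty$ ultracontractivity bound $\|\tilde P_t^{(N)} f\|_\infty \le c\, t^{-d_f/d_w} \|f\|_{L^1(\mu^{(N)})}$ for $f: V(G^{(N)}) \to \mathbb{R}$ and $t$ in the same range (with the adjustment $\tilde p_t = 2^{-t}\sum \binom{t}{k}p_k$ for the lazy chain absorbed in the constant).

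Next I would combine this ultracontractivity with the energy-dissipation inequality $\|\tilde P_n^{(N)} f\|_{L^2(\mu^{(N)})}^2 \ge \|f\|_{L^2(\mu^{(N)})}^2 - n\, \mathcal{E}^{(N)}(f,f)$ (valid for the lazy semigroup since $I - \tilde P^{(N)2}  \preceq I - \tilde P^{(N)}$ in the spectral order) and optimize over $n$, in the manner of Nash's classical argument (see for instance \cite{GMT06} or the derivation in \cite[\S 4]{BGK12}). The outcome is a Nash inequality uniform in $N$: there is $C < \infty$ with
\begin{align*}
\|f\|_{L^2(\mu^{(N)})}^{2(1 + d_w/d_f)} \le C\, \mathcal{E}^{(N)}(f,f)\, \|f\|_{L^1(\mu^{(N)})}^{2 d_w/d_f}
\end{align*}
for every $f: V(G^{(N)}) \to \mathbb{R}$. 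Given $f \in \mathcal{H}_0(S)$, Cauchy--Schwarz yields $\|f\|_{L^1(\mu^{(N)})}^2 \le \mu^{(N)}(S)\, \|f\|_{L^2(\mu^{(N)})}^2$; substituting into the Nash inequality and rearranging gives $\mathcal{E}^{(N)}(f,f)/\|f\|_{L^2(\mu^{(N)})}^2 \ge \cFK \mu^{(N)}(S)^{-d_w/d_f}$, which is \eqref{Prop:FKN02}.

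The main obstacle is that our on-diagonal \abbr{HKE} holds only on $[1, \eta T_N]$, whereas Nash's optimization selects a time scale $t_{\ast} \asymp \mu^{(N)}(S)^{d_w/d_f}$ that a priori could exceed this range. The saving grace is Assumption \ref{Ass:Weight}(c): $\mu^{(N)}(S) \le \mu^{(N)}(V(G^{(N)})) \le \tilde c\, \cv R_N^{d_f}$, so $t_\ast \le c' R_N^{d_w} = c' T_N$ uniformly in $N$, and we may take $\eta$ large enough (depending only on the constants in Assumption \ref{Ass:Weight}) to cover this regime. For the extreme case $\mu^{(N)}(S) = O(1)$ where Nash's argument degenerates, one instead argues directly: any $f \in \mathcal{H}_0(S)$ is then supported on $O(1)$ vertices, and uniform ellipticity together with Remark \ref{Rem:Card} give $\mathcal{E}^{(N)}(f,f)/\|f\|_{L^2(\mu^{(N)})}^2 \ge c$, which matches $\cFK \mu^{(N)}(S)^{-d_w/d_f} \asymp 1$ in this regime.
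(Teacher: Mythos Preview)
Your approach---deducing Faber--Krahn from the on-diagonal bound \eqref{eq:HKU} via a Nash-type inequality, and using $\mu^{(N)}(S)\le \cv R_N^{d_f}$ to confine the relevant time scale to $[1,\eta T_N]$---is exactly the route the paper takes (it cites \cite[Theorem~5.4]{CG98} for precisely this argument, noting that only $r\le R_N$, hence $t\le \eta T_N$, is needed). Two small corrections: the spectral inequality you invoke should read $I-\tilde P^{(N)2}\preceq 2(I-\tilde P^{(N)})$ (since $1-\lambda^2\le 2(1-\lambda)$ for $\lambda\in[0,1]$; the direction you wrote is reversed), and the Nash inequality cannot literally hold for \emph{every} $f$ on a finite graph (constants violate it), so either restrict to $f\in\mathcal{H}_0(S)$ with $S\subsetneq V(G^{(N)})$ or, more cleanly, bypass the global Nash statement by plugging $n\asymp \mu^{(N)}(S)^{d_w/d_f}$ directly into the combination of ultracontractivity and energy dissipation.
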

\begin{proof}(Sketch) For countably infinite $(G,\mu)$ satisfying 
the $p_0$-condition, such Faber-Krahn inequality is a standard 
consequence of (V($d_f$)) and the on-diagonal (HKE($d_w$)) upper bound. 
Indeed, its proof in \cite[Theorem 5.4]{CG98}, while written for $d_w=2$, 
is easily adapted to any $d_w > 0$, upon suitably adjusting various exponents
(e.g. taking $\nu=d_w/d_f$ and 
$r=t^{1/d_w}$, c.f. the discussion in 
\cite[Proposition 5.1]{GT01}). To get \eqref{Prop:FKN02} one
instead relies on \eqref{eq:HKU} 
at $y=x$, and on Assumption \ref{Ass:Weight}, noting that
all steps of the 
proof involve only the universal 
$d_f$, $d_w$, $p_0$, $\ce$, $\cv$ and $\cHK$. Further, 
following the proof of \cite[Theorem 5.4]{CG98} it now 
suffices to take only $r \le R_N$,
hence $t \le \eta \, T_N$ for some fixed $\eta<\infty$.
\end{proof}

\medskip
\begin{proof}[Proof of Proposition \ref{Prop:MTE}]
Recall that $\mu^{(N)}(S) = \pi^{(N)}(S) \mu^{(N)} (V(G^{(N)}))$.
By \eqref{eq:Spec} we further have that 
$\lambda^{(N)}(S) \ge \lambda_1^{(N)} (S)$ for any
choice of $S$ and $N$, hence Lemma \ref{Lem:FKN} results with 
\begin{equation}\label{eq:spec-prof-bd}
\tilde{\Lambda}^{(N)}(r) \ge 
\frac{\cFK}{2} \Big[ r \, \mu^{(N)}(V(G^{(N)})) \Big]^{-d_w/d_f} \,.
\end{equation}
By the assumed $d_f$-set condition, 
$\mu^{(N)}(V(G^{(N)})) \le \cv \, \Diam\{G^{(N)}\}^{d_f}$.
Thus, combining \eqref{eq:Umix-bd} and \eqref{eq:spec-prof-bd}
yields the bound 
\[
T_{\rm{mix}}^U (\epsilon ; G^{(N)} ) \le \frac{8}{\cFK} \frac{d_f}{d_w} 
\Big(\frac{4 \, \cv}{\epsilon}\Big)^{d_w/d_f} \, T_N \,,
\]
as claimed. \end{proof}

We conclude with a very useful covering property. 
\begin{Proposition}      \label{Prop:Covering}
Assumption \ref{Ass:Weight} 
implies that for any $\eta \in (0,1]$, there exist $L = L(\eta,d_f,
\tilde{c} \, \cv) < \infty$ such that each $G^{(N)}$ can be covered by $L$ balls $\{ B^{(N)} (x_i ,\eta \, R_N) \}_{i=1}^{L}$ of $V(G^{(N)})$.  
\end{Proposition}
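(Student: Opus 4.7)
The plan is a standard greedy volume/packing argument, leveraging the $d_f$-set condition of Assumption \ref{Ass:Weight}(c) together with the fact that $\Diam\{G^{(N)}\}=R_N$ makes the whole graph itself a ball of radius $R_N$.

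\medskip\noindent
\textbf{Step 1 (greedy construction).} Pick $x_1\in V(G^{(N)})$ arbitrarily, and inductively, having chosen $x_1,\dots,x_k$, if $V(G^{(N)})\not\subseteq\bigcup_{i\le k}B^{(N)}(x_i,\eta R_N)$, select $x_{k+1}$ outside that union. Since $V(G^{(N)})$ is finite, the procedure terminates at some $L=L(N)$; by construction $\{B^{(N)}(x_i,\eta R_N)\}_{i=1}^{L}$ covers $V(G^{(N)})$, and $d^{(N)}(x_i,x_j)>\eta R_N$ for all $i\neq j$. The triangle inequality then shows that the half-radius balls $\{B^{(N)}(x_i,\eta R_N/2)\}_{i=1}^{L}$ are pairwise disjoint.

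\medskip\noindent
\textbf{Step 2 (volume comparison, main case).} Assume first that $\eta R_N/2\ge 1$. Applying Assumption \ref{Ass:Weight}(c) to each $B^{(N)}(x_i,\eta R_N/2)$ yields
\[
\mu^{(N)}\bigl(B^{(N)}(x_i,\eta R_N/2)\bigr)\ \ge\ \cv^{-1}(\eta R_N/2)^{d_f},
\]
whereas, because $B^{(N)}(x,R_N)=V(G^{(N)})$ for any $x$, the same assumption gives
\[
\mu^{(N)}\bigl(V(G^{(N)})\bigr)\ \le\ \cv\,R_N^{d_f}.
\]
Summing the disjoint lower bounds and using the global upper bound,
\[
L\cdot \cv^{-1}(\eta R_N/2)^{d_f}\ \le\ \cv R_N^{d_f},
\]
so $L\le \cv^{2}(2/\eta)^{d_f}$, a bound depending only on $\eta$, $d_f$, and $\cv$.

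\medskip\noindent
\textbf{Step 3 (small-$N$ case).} If $\eta R_N/2<1$, then $R_N\le 2/\eta$, and by the $d_f$-set condition together with \eqref{eq:Card} in Remark \ref{Rem:Card},
\[
\sharp V(G^{(N)})\ \le\ \tilde{c}\,\mu^{(N)}\bigl(V(G^{(N)})\bigr)\ \le\ \tilde{c}\,\cv\,R_N^{d_f}\ \le\ \tilde{c}\,\cv\,(2/\eta)^{d_f},
\]
so we cover $V(G^{(N)})$ by singletons, giving an $L$ of the same form.

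\medskip\noindent
Combining the two regimes produces a uniform bound $L=L(\eta,d_f,\tilde{c}\,\cv)<\infty$, as claimed. No step is truly delicate; the only mild obstacle is bookkeeping the regime $\eta R_N/2<1$, for which Assumption \ref{Ass:Weight}(c) does not directly apply to the small balls, handled via Step 3.
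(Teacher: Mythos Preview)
Your proof is correct and follows essentially the same packing/volume argument as the paper: a greedy construction yielding disjoint half-radius balls (which the paper simply cites from \cite[Lemma 6.2(a)]{Barlow17}), followed by the $d_f$-set volume comparison to bound $L$. The only differences are cosmetic---you work directly with $\mu^{(N)}$ rather than passing through cardinality via \eqref{eq:Card}, and you explicitly separate out the regime $\eta R_N/2<1$, which the paper glosses over (though its cardinality-based bound remains valid there since each half-radius ball still contains at least one vertex).
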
   
\begin{proof} Covering $V(G^{(N)})$ by a single ball of radius 
$R_N$, thanks to \eqref{eq:Card} and 
the assumed $d_f$-set condition  
$\sharp V(G^{(N)}) \le \tilde{c} \, \cv (R_N)^{d_f}$. Further, $G^{(N)}$ can be covered
by $L$ balls $B^{(N)} (x_i ,\eta R_N)$ such that $\{B^{(N)}(x_i,\eta R_N/2)\}$ 
are disjoint (e.g. \cite[Lemma 6.2(a)]{Barlow17}).  
Consequently, $L (\tilde{c} \, \cv)^{-1} (\eta R_N/2)^{d_f} \le \sharp V(G^{(N)})$
and we conclude that $L \le (\tilde{c} \, \cv)^2 (2/\eta)^{d_f}$ for all $N$, as claimed.
\end{proof}

\subsection{Strongly recurrent case: $d_f< d_w$ }  \label{Subsec:Rec}
A consequence of Assumptions \ref{Ass:Weight}, \ref{Ass:UPHI} 
for $d_f < d_w$ is the following relation between 
the resistance metric and the graph distance.
\begin{Proposition}  \label{Prop:ERN} 
Suppose Assumptions \ref{Ass:Weight}, \ref{Ass:UPHI} and $d_f < d_w$. 
Then, for some $\creff$ finite, all $N \ge 1$ and any $x,y \in V(G^{(N)})$,
     \begin{align} \label{eq:ERN} 
              \creff^{-1} \, d^{(N)} (x,y)^{d_w - d_f} \le 
              R_{\rm{eff}}^{(N)} (x,y)  \le  \creff \, d^{(N)} (x,y)^{d_w - d_f} \,.
     \end{align}      
\end{Proposition}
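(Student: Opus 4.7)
My strategy is to sandwich $\ERN(x,y)$ between values of the Green function $g_B^{(N)}$ for the walk killed on exiting a ball $B$ of radius $r\asymp d := d^{(N)}(x,y)$, and then to extract the two-sided diagonal estimate $g_B^{(N)}(x,x)\asymp r^{d_w-d_f}$ from the sub-Gaussian heat-kernel bounds in Proposition \ref{Prop:HKEN}.

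For the lower bound, take $r:=\lfloor d/2\rfloor$ and $B:=B^{(N)}(x,r)$, so that $y\in B^c$. Since short-circuiting only decreases effective resistance, $\ERN(x,y)\geq \ERN(x,B^c)=g_B^{(N)}(x,x)$, and the pointwise inequality $g_B^{(N)}(x,\cdot)\leq g_B^{(N)}(x,x)$ combined with $E_x[\tau_{B^c}]=\sum_{z\in B}g_B^{(N)}(x,z)\mu_z^{(N)}$ yields
\[
g_B^{(N)}(x,x)\;\geq\;\frac{E_x[\tau_{B^c}]}{\mu^{(N)}(B)}.
\]
Summing the exponential tail $P_x(\tau_{B^c}>t)\geq c_2^{-1}e^{-c_2 t/r^{d_w}}$ supplied by Corollary \ref{Cor:HKEN} gives $E_x[\tau_{B^c}]\geq c\,r^{d_w}$, while Assumption \ref{Ass:Weight}(c) and Remark \ref{Rem:Card} yield $\mu^{(N)}(B)\leq \tilde{c}\,\cv\,r^{d_f}$, producing $\ERN(x,y)\geq c'\,d^{d_w-d_f}$. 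Values of $d$ below the $R_0$ threshold of Corollary \ref{Cor:HKEN} are absorbed into $\creff$ via the trivial lower bound $\ERN(x,y)\geq 1/\mu_x^{(N)}\geq \tilde{c}^{-1}$, itself obtained from the test function $\mathbf{1}_{\{x\}}$ and Remark \ref{Rem:Card}.

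For the upper bound I treat two regimes. In the main regime $d\leq R_N/8$, set $B:=B^{(N)}(x,4d)\subseteq V(G^{(N)})$ and plug $h:=g_B^{(N)}(\cdot,x)-g_B^{(N)}(\cdot,y)$, extended by zero off $B$, into the variational definition of $\ERN(x,y)$. The defining identity $\mathcal{E}^{(N)}(g_B^{(N)}(\cdot,z),f)=f(z)$ for $f$ supported on $B$ gives
\[
\mathcal{E}^{(N)}(h,h)\;=\;h(x)-h(y)\;=\;g_B^{(N)}(x,x)+g_B^{(N)}(y,y)-2g_B^{(N)}(x,y),
\]
hence $\ERN(x,y)\leq 2\max_{z\in\{x,y\}}g_B^{(N)}(z,z)$. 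Splitting the sum $g_B^{(N)}(z,z)\mu_z^{(N)}=\sum_{t\geq 0}P_z(X_t^{(N)}=z,\,\tau_{B^c}>t)$ at $T\asymp d^{d_w}$, using the diagonal part of \eqref{eq:HKU} for $t\leq T$ (valid since $T\leq \eta T_N$) and an iterated-Markov bound $P_z(\tau_{B^c}>t)\leq Ce^{-t/(Cr^{d_w})}$ (itself a consequence of Corollary \ref{Cor:HKEN} via a standard subdivision argument) for $t>T$, yields $g_B^{(N)}(z,z)\leq Cd^{d_w-d_f}$. In the residual regime $R_N/8<d\leq R_N$ the ball $B^{(N)}(x,4d)$ no longer fits in $V(G^{(N)})$; I instead invoke the commute-time identity $2\ERN(x,y)\mu^{(N)}(V(G^{(N)}))=E_x[\tau_y]+E_y[\tau_x]$, bound each hitting time by $cT_N$ via Proposition \ref{Prop:MTE}, and use $\mu^{(N)}(V(G^{(N)}))\geq cR_N^{d_f}$ to conclude $\ERN(x,y)\leq CR_N^{d_w-d_f}\leq C'd^{d_w-d_f}$.

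The main technical obstacle is the diagonal upper bound $g_B^{(N)}(z,z)\leq Cr^{d_w-d_f}$: Proposition \ref{Prop:HKEN} furnishes the heat-kernel upper bound only for $t\leq \eta T_N$, which forces the short-time/long-time splitting above, and uniformity of the resulting constants in $N$ hinges on the fact that the relevant exit-time scale $d^{d_w}$ sits strictly inside the valid window precisely when $d\leq R_N/8$. The two-regime split in the upper bound is itself necessary since the Green-function route degenerates as $d$ approaches $R_N$, and the commute-time detour then rescues the desired estimate only thanks to the sharp $T_N$-scale uniform mixing bound of Proposition \ref{Prop:MTE}.
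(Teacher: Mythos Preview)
Your lower bound argument is correct and close in spirit to the paper's route via the property (SRL$(d_w)$) of \cite{BCK05}. The upper bound, however, contains a genuine gap. Plugging a single test function $h$ into the variational formula $\ERN(x,y)=\sup_f (f(x)-f(y))^2/\mathcal{E}(f,f)$ can only produce a \emph{lower} bound on $\ERN(x,y)$: your computation $\mathcal{E}^{(N)}(h,h)=h(x)-h(y)$ is correct, but it yields $\ERN(x,y)\ge h(x)-h(y)$, not $\le$. Equivalently, $g_B(x,x)+g_B(y,y)-2g_B(x,y)$ is the effective resistance between $x$ and $y$ in the network with $B^c$ shorted to a single vertex, and by Rayleigh shorting can only \emph{decrease} resistance, so again only $\ERN(x,y)\ge g_B(x,x)+g_B(y,y)-2g_B(x,y)$ follows. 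The associated current $\theta(u,v)=(h(u)-h(v))\mu_{uv}$ is not a valid unit $x$--$y$ flow in $G^{(N)}$ either (it has extra sources/sinks along the exterior boundary of $B$), so Thomson's principle does not rescue the argument.

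Your residual-regime detour is likewise problematic: Proposition~\ref{Prop:MTE} bounds only the uniform mixing time, and deducing $E_x[\tau_y]\le cT_N$ from it is, via the very commute-time identity you invoke, equivalent to the resistance upper bound $\ERN(x,y)\le c R_N^{d_w-d_f}$ you are trying to prove---so the reasoning is circular. The paper takes a different path for the upper bound: it first derives the Poincar\'e inequality (PI$(d_w)$) from the heat-kernel estimates of Proposition~\ref{Prop:HKEN}, following \cite[Lemma~2.3(ii), Proposition~4.2(1)]{BCK05}. A Poincar\'e-type inequality supplies a lower bound on $\mathcal{E}(f,f)$ valid for \emph{every} admissible $f$, which is precisely what is needed to upper-bound $\ERN(x,y)$.
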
  
\begin{proof} (Sketch:) For a single infinite weighted graph this 
is a well known consequence of (HKE($d_w$)), see for example 
\cite[Theorem 1.3]{BCK05}. In our setting, the upper bound on 
$R_{\rm{eff}}^{(N)}$ is derived from Proposition \ref{Prop:HKEN}
by going via (PI($d_w$)), as done in the proof of 
\cite[Lemma 2.3(ii), Proposition 4.2(1)]{BCK05}. The 
corresponding lower bound in \eqref{eq:ERN} is proved
as in \cite[Proposition 4.2(2)]{BCK05}, by showing instead 
the property (SRL$(d_w)$) (see remark at \cite[bottom of Pg. 1650]{BCK05}).
As in Proposition \ref{Prop:HKEN}, all steps use only 
constants from Assumptions \ref{Ass:Weight}--\ref{Ass:UPHI}
and require our \abbr{sub-HKE} only at $t \le \eta_0 T_N$.
Hence, we end with finite $\creff$ which is independent of $N$.  
\end{proof}

The following corollary of 
Proposition \ref{Prop:ERN} is immediate.
\begin{Corollary}  \label{Cor:Scale}
Suppose Assumptions \ref{Ass:Weight} and \ref{Ass:UPHI} hold for some 
$d_f < d_w$ and let
     \begin{align}   \label{eq:Not05}
             r( G^{(N)} ) := \max_{x,y \in V(G^{(N)}) }  \{ R_{\rm{eff} }^{(N)} (x,y) \},  \qquad  
             S_N := \mu^{(N)} (V(G^{(N)})) r (G^{(N)}) . 
     \end{align}
Then, for some finite $c_\star$  
      \begin{align*}    
 c_\star^{-1} T_N  \le S_N  \le c_\star T_N,   \qquad   \forall N \ge 1 \, . 
     \end{align*}
\end{Corollary}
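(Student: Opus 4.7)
The plan is to write $S_N$ as a product of two factors, each of which we show to be comparable to a suitable power of $R_N$, and then multiply.

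First I would bound $r(G^{(N)})$. By Proposition \ref{Prop:ERN}, for every pair $x,y \in V(G^{(N)})$ we have
\[
\creff^{-1} d^{(N)}(x,y)^{d_w - d_f} \le R_{\rm eff}^{(N)}(x,y) \le \creff \, d^{(N)}(x,y)^{d_w - d_f}.
\]
Since $d^{(N)}(x,y) \le R_N$ always, taking the max gives $r(G^{(N)}) \le \creff R_N^{d_w - d_f}$. Conversely, by definition of $R_N$ there exist $x_\star,y_\star \in V(G^{(N)})$ with $d^{(N)}(x_\star,y_\star) = R_N$, so $r(G^{(N)}) \ge \creff^{-1} R_N^{d_w - d_f}$. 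Here the hypothesis $d_f < d_w$ is what makes the exponent $d_w - d_f$ positive so that the dependence on $R_N$ is monotone.

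Next I would bound $\mu^{(N)}(V(G^{(N)}))$. Fixing any basepoint $x_0 \in V(G^{(N)})$ we have $V(G^{(N)}) \subseteq B^{(N)}(x_0,R_N)$, so the $d_f$-set condition in Assumption \ref{Ass:Weight}(c) yields
\[
\mu^{(N)}(V(G^{(N)})) \le V^{(N)}(x_0,R_N) \le \cv R_N^{d_f}.
\]
For the matching lower bound, the same $d_f$-set condition (applied at radius $R_N$, which is permitted since $1 \le R_N \le \Diam\{G^{(N)}\}$) gives
\[
\mu^{(N)}(V(G^{(N)})) \ge V^{(N)}(x_0,R_N) \ge \cv^{-1} R_N^{d_f}.
\]

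Combining the two pairs of estimates,
\[
\creff^{-1}\cv^{-1} R_N^{d_f} \cdot R_N^{d_w - d_f} \;\le\; S_N \;\le\; \creff \, \cv \, R_N^{d_f} \cdot R_N^{d_w - d_f},
\]
which is exactly $(\creff \cv)^{-1} T_N \le S_N \le (\creff \cv) T_N$ by the definition $T_N = R_N^{d_w}$, so setting $c_\star := \creff \cv$ finishes the proof. Since both $\creff$ (from Proposition \ref{Prop:ERN}) and $\cv$ (from Assumption \ref{Ass:Weight}) are universal in $N$, so is $c_\star$. There is no genuine obstacle here; the statement is essentially a bookkeeping consequence of Proposition \ref{Prop:ERN} together with the $d_f$-set condition, and the only point deserving attention is ensuring that the lower bound on $r(G^{(N)})$ is witnessed by a pair realizing the diameter, which is immediate from the definition of $R_N$.
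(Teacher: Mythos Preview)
Your proof is correct and follows the same approach as the paper: the paper's own proof simply notes that $\mu^{(N)}(G^{(N)}) \asymp R_N^{d_f}$ by the $d_f$-set condition and $r(G^{(N)}) \asymp R_N^{d_w-d_f}$ by Proposition~\ref{Prop:ERN}, then multiplies. You have merely expanded these two $\asymp$-statements into explicit two-sided bounds, which is entirely appropriate.
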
 
\begin{proof} By our $d_f$-set condition 
$\mu^{(N)} (G^{(N)}) \asymp (R_N)^{d_f}$, 
whereas $r(G^{(N)}) \asymp  (R_N)^{d_w - d_f}$, 
thanks to Proposition \ref{Prop:ERN}. With 
$T_N := (R_N)^{d_w}$ we are thus done.
\end{proof}

\subsection{Transient case: $d_f > d_w$ }  \label{Subsec:Trans}
When $d_f>d_w$, Proposition \ref{Prop:MTE} and  \eqref{eq:HKU} 
yield the following decay rate of the Green functions.   
 \begin{Proposition}   \label{Prop:Green}
    Suppose  Assumptions \ref{Ass:Weight}, \ref{Ass:UPHI} and 
   $d_f > d_w$. Then, for some $\cgr(\cdot)$ finite, 
   any $\epsilon>0$ and finite $N$,
             \begin{align}   \label{eq:Green}
                    \tilde{g}^{(N)} (x,y) : =  \sum_{t=0}^{T_{\rm{mix}}^U (\epsilon ; G^{(N)} ) }  \tilde{p}_t^{(N)} (x,y) \le \cgr(\epsilon) 
                    \, d^{(N)} (x,y)^{d_w -d_f},  
                           \qquad  \forall y \ne x \in V(G^{(N)}) \, . 
            \end{align}
\end{Proposition}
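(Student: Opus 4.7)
The plan is to exploit the sub-Gaussian upper bound \eqref{eq:HKU} of Proposition \ref{Prop:HKEN} together with the uniform mixing-time estimate $T_{\mathrm{mix}}^U(\epsilon; G^{(N)}) \le c(\epsilon) T_N$ from Proposition \ref{Prop:MTE}. Set $r := d^{(N)}(x,y) \ge 1$ and $t_0 := r^{d_w}$. Since the time index in the Green sum is capped at $c(\epsilon) T_N$, Proposition \ref{Prop:HKEN} applies with $\eta = c(\epsilon)$ throughout the range of summation, and the same form of upper bound (with possibly adjusted constants) holds for the lazy heat kernel $\tilde p_t^{(N)}$: either via the binomial time-change $\tilde X_t^{(N)} \stackrel{d}{=} X_{N_t}^{(N)}$ and standard concentration of $N_t \sim \mathrm{Bin}(t,1/2)$, or directly by noting that Assumption \ref{Ass:UPHI} is inherited by the lazy chain. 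I would then split $\tilde g^{(N)}(x,y) = \Sigma_{\mathrm{short}} + \Sigma_{\mathrm{long}}$ at $t = t_0$.

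For $\Sigma_{\mathrm{short}}$ (the range $1 \le t \le t_0$), I retain the full exponential factor and compare the sum with an integral; after the change of variable $s = t/t_0$,
\[
\Sigma_{\mathrm{short}} \le C \int_0^{t_0} t^{-d_f/d_w} \exp\Bigl[-c_\star (t_0/t)^{1/(d_w-1)}\Bigr] dt = C \, t_0^{\,1-d_f/d_w} \int_0^1 s^{-d_f/d_w} \exp\bigl[-c_\star s^{-1/(d_w-1)}\bigr] ds.
\]
Since $d_w \ge 2$, the exponential factor annihilates the singularity at $s = 0$, and the inner integral is a finite constant depending only on $d_f$, $d_w$, and $\cHK$. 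Substituting $t_0 = r^{d_w}$ yields $\Sigma_{\mathrm{short}} \le C_1 \, r^{d_w - d_f}$.

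For $\Sigma_{\mathrm{long}}$ (the range $t_0 < t \le T_{\mathrm{mix}}^U$), the exponential factor in \eqref{eq:HKU} is $\le 1$, so it suffices to bound
\[
\Sigma_{\mathrm{long}} \le \cHK \sum_{t > t_0} t^{-d_f/d_w} \le C \int_{t_0}^{\infty} t^{-d_f/d_w} dt = \frac{C}{d_f/d_w - 1} \, t_0^{\,1-d_f/d_w} = C_2 \, r^{d_w-d_f},
\]
where the improper integral converges precisely because $d_f > d_w$, i.e.\ $d_f/d_w > 1$. Summing the two pieces gives \eqref{eq:Green} with $\cgr(\epsilon) = C_1 + C_2$, the $\epsilon$-dependence entering only through $c(\epsilon)$ feeding into $\cHK = \cHK(c(\epsilon))$. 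The main (and only mildly technical) obstacle is verifying that the sub-Gaussian bound transfers cleanly from $p_t^{(N)}$ to $\tilde p_t^{(N)}$ with constants uniform in $N$; once this is in hand, the rest is essentially a calculus estimate and the transience condition $d_f > d_w$ is used in the single critical place that makes the tail sum $\sum_{t \ge t_0} t^{-d_f/d_w}$ summable to order $t_0^{\,1-d_f/d_w}$.
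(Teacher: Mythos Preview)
Your argument is correct and follows the paper's route: invoke Proposition~\ref{Prop:MTE} to cap the sum at $\eta T_N$ with $\eta=c(\epsilon)$, apply the sub-Gaussian upper bound \eqref{eq:HKU}, and use $d_f/d_w>1$ so that $\sum_t t^{-d_f/d_w}\exp[-\cHK^{-1}(r^{d_w}/t)^{1/(d_w-1)}]$ converges and is of order $r^{d_w-d_f}$. The paper sidesteps what you flag as the ``only mildly technical obstacle'' by the identity $\tilde p_t^{(N)}(x,y)=\sum_s q_t(s)\,p_s^{(N)}(x,y)$ with $q_t(s)=\binom{t}{s}2^{-t}$ and $\sum_{t\ge 0}q_t(s)=2$, which gives $\tilde g^{(N)}(x,y)\le 2\sum_{s=1}^{T_{\rm mix}^U}p_s^{(N)}(x,y)$ in one line---so no lazy-chain heat kernel bound is ever needed.
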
 
\begin{proof} Clearly 
$\tilde{p}_t^{(N)}(x,y)=\sum_{s} q_t(s) p_s^{(N)}(x,y)$
with $q_t(s)$ the probability that a 
Binomial$(t,1/2)$ equals $s$. Consequently,   
$\tilde{g}^{(N)} (x,y) \le 2 g^{(N)} (x,y)$ 
(since $\sum_{t} q_t(s) =2$). We further 
replace $T_{\rm{mix}}^U (\epsilon ; G^{(N)})$ 
in \eqref{eq:Green} by $\eta T_N$, for 
$\eta:=c(\epsilon)$ of Proposition \ref{Prop:MTE}. 
Hence, from \eqref{eq:HKU} for 
some $\cHK=\cHK(\eta)$, all $N$ and $x \ne y$, 
\[ 
\tilde{g}^{(N)}(x,y) \le 2 \cHK \sum_{t=1}^{\infty} 
t^{-d_f/d_w} 
\exp \Big[  -  \cHK^{-1} \Big(  \frac{ d^{(N)} (x,y)^{d_w}}{t}  \Big)^{1/(d_w -1) }  
\Big] \,.
\]
Since $d_f/d_w>1$, the series on the \abbr{rhs} converges 
(even when $d^{(N)}(x,y)=0$), and it is easy to further 
bound it by $\cgr' \, d^{(N)} (x,y)^{d_w -d_f}$ for some 
$\cgr'=\cgr'(\cHK)$ finite, as we claim in \eqref{eq:Green}.
\end{proof}

\section{Cover time: Proof of Proposition \ref{Thm:UCT10}}   \label{Sec:CT}

We recall $S_N$, $r(G^{(N)})$ of \eqref{eq:Not05} and 
use the following notations for $x,y \in V(G^{(N)})$, $r \in [0,1]$, 
\begin{equation*}
\TERN  (x,y) := \frac{ \ER^{(N)} (x,y) }{ r(G^{(N)}) } \in [0,1],   \qquad   B_R^{(N)} (x,r) : = \{ y  \in V(G^{(N)}) \mid \TERN (x,y ) \le r \}\,.  
\end{equation*}
We show in Lemma \ref{Lem:Loc10} that for some $\epsilon'>0$, with 
positive probability, during its first $S_N$ steps, 
a random walk on $G^{(N)}$ makes at least $\epsilon' \, r(G^{(N)})$ visits 
to the starting point. Combining this with the modulus 
of continuity of the relevant local times (of Lemma \ref{Lem:UTELT20}),
we show in Proposition \ref{Prop:Loc30} and Corollary \ref{Cor:Loc50} 
that for some $\kappa>0$, with positive probability, by time 
$4 S_N$ a (small) ball $B^{(N)}_R (x,\kappa)$ is covered by the
random walk trajectory. In view of Propositions \ref{Prop:Covering}
and \ref{Prop:ERN}, if in addition $d_f<d_w$, then for some 
$L=L(\kappa,\creff)$ finite and all $N$, the set $V(G^{(N)})$ is covered 
by some $\{ B^{(N)}_R (z_i, \kappa) \}_{i=1}^L$.
Proposition \ref{Thm:UCT10} then follows by using this fact,
the Markov property and having 
$S_N \asymp T_N$ (see Corollary \ref{Cor:Scale}).

\medskip
We now implement the details of the preceding proof strategy.
\begin{Lemma}  \label{Lem:Loc10}
Under Assumptions \ref{Ass:Weight} and \ref{Ass:UPHI}, 
there exists $\epsilon>0$ such that
        \begin{align}\label{eq:Loc10-bd}
        \max_{N \ge 1} \max_{x \in V(G^{(N)})}
                 P_x \left(  \widehat{L}^{(N)}_{S_N} (x)  \le 2 \epsilon  \right)  
                            \le \frac{1}{8} \,, 
                            \quad \qquad 
 \widehat{L}_t^{(N)}(x) := \frac{1}{r(G^{(N)}) \mu_x^{(N)}}  \sum_{s=0}^{t-1}  1_{x} ( X_s^{(N)} ) \,.           
        \end{align}   
 \end{Lemma}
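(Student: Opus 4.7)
The denominator $r(G^{(N)})\mu_x^{(N)}$ in the definition of $\widehat{L}^{(N)}_{S_N}(x)$ is precisely the one that renders its expectation of order unity, so I expect the lemma to follow from a one-line first-moment (Markov) computation applied to the cumulative return time to $x$, with no appeal to the sub-Gaussian heat-kernel bounds of Proposition \ref{Prop:HKEN}. My plan is the following.

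Starting from $X_0^{(N)}=x$, set $\sigma_0:=0$ and for $k\ge 1$ let $\sigma_k:=\inf\{t>\sigma_{k-1}:X_t^{(N)}=x\}$ denote the time of the $(k+1)$-th visit to $x$. By the strong Markov property, the increments $\{\sigma_k-\sigma_{k-1}\}_{k\ge 1}$ are i.i.d.\ under $P_x$ with the law of the first return time $\tau_x^{+}$, and Kac's lemma for irreducible reversible chains gives $E_x[\tau_x^{+}] = 1/\pi^{(N)}(x) = \mu^{(N)}(V(G^{(N)}))/\mu_x^{(N)}$. Recalling $S_N=\mu^{(N)}(V(G^{(N)}))\,r(G^{(N)})$ from \eqref{eq:Not05}, this yields
\[
E_x[\sigma_k] \;=\; \frac{k\,\mu^{(N)}(V(G^{(N)}))}{\mu_x^{(N)}} \;=\; \frac{k\,S_N}{\mu_x^{(N)}\,r(G^{(N)})}\,.
\]

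Setting $N_t^{(N)}(x):=\sum_{s=0}^{t-1} 1_x(X_s^{(N)})$ and $k:=\lfloor 2\epsilon\,\mu_x^{(N)}\,r(G^{(N)})\rfloor$, I observe that $\{\widehat{L}^{(N)}_{S_N}(x)\le 2\epsilon\} = \{N_{S_N}^{(N)}(x)\le k\} = \{\sigma_k \ge S_N\}$, the last identity because $N_t^{(N)}(x)\ge k+1$ iff $\sigma_k<t$. Markov's inequality then gives
\[
P_x\bigl(\widehat{L}^{(N)}_{S_N}(x) \le 2\epsilon\bigr) \;\le\; \frac{E_x[\sigma_k]}{S_N} \;\le\; 2\epsilon\,,
\]
so choosing $\epsilon:=1/16$ yields the claimed bound $1/8$. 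The only edge case is $k=0$, which forces $2\epsilon\,\mu_x^{(N)}r(G^{(N)})<1$; but since $X_0=x$ we have $N_{S_N}^{(N)}(x)\ge 1>2\epsilon\,\mu_x^{(N)}r(G^{(N)})$, so the event $\{\widehat{L}^{(N)}_{S_N}(x)\le 2\epsilon\}$ is vacuous.

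Because the argument uses only reversibility (via Kac's lemma) and the defining relation $S_N=\mu^{(N)}(V(G^{(N)}))\,r(G^{(N)})$, there is no substantive obstacle here. In particular, the heavier analytic input of Section \ref{Sec:Pre} (sub-Gaussian heat-kernel estimates, Faber-Krahn, $\ldots$) is not needed at this step; it will be required only for the modulus-of-continuity estimate on the normalized local times (Lemma \ref{Lem:UTELT20}) and for the ensuing covering of small resistance balls (Proposition \ref{Prop:Loc30}).
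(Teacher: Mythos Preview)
Your proof is correct and follows essentially the same approach as the paper: both rewrite the event $\{\widehat{L}^{(N)}_{S_N}(x)\le 2\epsilon\}$ as $\{\sigma_k\ge S_N\}$ for $k=\lfloor 2\epsilon\,\mu_x^{(N)} r(G^{(N)})\rfloor$, invoke Kac's formula $E_x[\tau_x^+]=\mu^{(N)}(G^{(N)})/\mu_x^{(N)}$, and apply Markov's inequality to get the bound $2\epsilon$, concluding with $\epsilon\le 1/16$. Your treatment of the edge case $k=0$ and your remark that no heat-kernel input is needed here are accurate additions.
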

\begin{proof} Recall that the successive times  
in which the walk $X^{(N)}_t$ re-visits $x=X^{(N)}_0$, 
form a partial sum, whose i.i.d. $\mathbb{N}$-valued increments 
$\{\eta_x^{(N)}(i)\}_{i \ge 1}$ have mean
\[
E_x \big[\eta_x^{(N)} \big] = \frac{1}{\pi^{(N)}(x)} =  
\frac{ \mu^{(N)} (G^{(N)} ) }{ \mu_x^{(N)}} \,.
\] 
Setting $m_x^{(N)}:=[2 \epsilon \, \mu_x^{(N)} \, r (G^{(N)})]$ we 
thus have by Markov's inequality that
\begin{align*}
P_x \left( \widehat{L}_{S_N}^{(N)} (x) \le  2 \epsilon  \right)  
& = P_x  \Big(  \sum_{i=1}^{m_x^{(N)}} \eta_x^{(N)} (i) \ge  S_N  \Big)    
    \le  \frac{m_x^{(N)}}{S_N}  E_x \big[ \eta_x^{(N)} \big]
                    \le  2 \epsilon \,, 
\end{align*} 
yielding  \eqref{eq:Loc10-bd} when $\epsilon \le 2^{-4}$. 
\end{proof}

\medskip
With our graphs having uniform volume growth, \cite[Theorem 1.4]{C15}
applies here, giving the following modulus of continuity result.
\begin{Lemma}   \label{Lem:UTELT20}   
Suppose Assumptions \ref{Ass:Weight} and \ref{Ass:UPHI}.
Then, for $\varphi(\kappa) := \sqrt{\kappa (1 + |\log \kappa|)}$ we have that 
   \begin{align*}
\Delta (\lambda) := 
\sup_{\kappa \in (0,1], N \ge 1} \sup_{z \in V(G^{(N)})}
 P_z \Big( 
 \max_{t \le S_N}  \max_{\substack{x,y \in V(G^{(N)}) \\ \TERN  (x,y) \le \kappa }}
 |  \widehat{L}^{(N)}_{t} (x)  - \widehat{L}_{t}^{(N)}  (y) | 
                        \ge \lambda \varphi(\kappa)  \Big) 
                        \to 0 \,, \; \hbox{ as } \; \lambda \to \infty \,.
   \end{align*}
\end{Lemma}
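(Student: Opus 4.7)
The plan is to reduce the statement to a direct application of the general modulus of continuity result \cite[Theorem 1.4]{C15}, which bounds $P_z(\max_{t \le \cdot}\max_{R_{\rm{eff}}(x,y) \le \cdot}|L_t(x)-L_t(y)|\ge \cdots)$ for random walks on resistance-regular graphs in terms of their resistance metric and invariant measure. The key is to verify that the hypotheses of that theorem can be checked for every $(G^{(N)},\mu^{(N)})$ with constants that are independent of $N$, and to track the scaling carefully so that the time horizon $S_N$ and the normalization by $r(G^{(N)})\mu_x^{(N)}$ in $\widehat{L}^{(N)}_t$ fit the framework.

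The first step is to collect the uniform geometric inputs. Assumption \ref{Ass:Weight}(c) together with Remark \ref{Rem:Card} gives uniform Ahlfors $d_f$-regularity (hence uniform volume doubling) with constants depending only on $\cv, \tilde c, d_f$. Proposition \ref{Prop:HKEN} supplies uniform sub-Gaussian heat kernel bounds up to time $\eta T_N$, which, combined with Corollary \ref{Cor:Scale}, covers the horizon $S_N \asymp T_N$ relevant to the lemma. Proposition \ref{Prop:ERN} then yields the uniform comparison $\ER^{(N)}(x,y) \asymp d^{(N)}(x,y)^{d_w-d_f}$, which translates resistance-metric balls to graph-metric balls with constants independent of $N$. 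These are exactly the inputs required to run Croydon's argument, which derives the $\sqrt{\kappa(1+|\log \kappa|)}$ modulus through a chaining argument against the Dynkin--Ray-Knight Gaussian field on the resistance metric space; concentration estimates for the relevant Gaussian increments have variance controlled by $\ER^{(N)}(x,y)$, producing the $\varphi$ rate.

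The second step is to check the normalization. The factor $r(G^{(N)})\mu_x^{(N)}$ is precisely the natural one: it rescales the raw occupation time so that the expected value at $x$ after $S_N$ steps is an order one multiple of $1$ (by the computation in Lemma \ref{Lem:Loc10}), and it matches the scale under which the Ray--Knight correspondence with the Gaussian free field becomes dimensionless in terms of $\TERN$. After this rescaling, Croydon's theorem gives a bound whose tail is uniform in $z$, $t\le S_N$, and the resistance scale $\kappa \in (0,1]$; taking the supremum over $N$ causes no harm because every constant introduced along the way depends only on the universal parameters $d_f,d_w,\ce,\cv,p_0,\CPHI,\cPHI$.

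The main obstacle I expect is bookkeeping rather than any new idea: one must ensure that passing from the single-graph statement in \cite{C15} to a statement uniform in $N$ does not require any property (such as two-sided global heat-kernel estimates valid for \emph{all} $t$) beyond what Section \ref{Sec:Pre} provides up to time $\eta T_N$. Since the event in $\Delta(\lambda)$ only involves $t\le S_N$ and the chaining only samples resistance scales $\le r(G^{(N)})$, the estimates from Proposition \ref{Prop:HKEN} and Proposition \ref{Prop:ERN} suffice, and Croydon's bound can be invoked verbatim to yield the claimed vanishing of $\Delta(\lambda)$ as $\lambda\to\infty$.
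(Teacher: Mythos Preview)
Your approach is essentially the same as the paper's: the paper simply remarks that uniform volume growth makes \cite[Theorem 1.4]{C15} applicable and states the lemma without further argument. One minor caveat is that your verification invokes Proposition~\ref{Prop:ERN} and Corollary~\ref{Cor:Scale}, which require $d_f<d_w$, whereas the lemma as stated assumes only Assumptions~\ref{Ass:Weight} and~\ref{Ass:UPHI}; this is harmless since the lemma is used only in the strongly recurrent regime, but the paper's one-line justification via uniform volume growth avoids this restriction.
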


\noindent
Combining Lemmas \ref{Lem:Loc10} and \ref{Lem:UTELT20} yields
the following uniform lower bound on the minimum 
over $y \in B_R^{(N)}(x,\kappa)$, of the
normalized local time at $y$ during the first $4 S_N$ moves of 
the random walker. 
\begin{Proposition}   \label{Prop:Loc30}
Under Assumptions \ref{Ass:Weight} and \ref{Ass:UPHI},
for some positive $\epsilon, \kappa$ 
        \begin{align}   \label{eq:Loc31}
              \inf_{N \ge 1}  \inf_{x,z \in V(G^{(N)})} 
               P_z \Big(  \min_{y \in B_R^{(N)} (x,\kappa)}
               \{ \widehat{L}_{4 S_N }^{(N)} (y) \} \ge \epsilon  \Big) \ge \frac{1}{2}\,.  
        \end{align} 
\end{Proposition}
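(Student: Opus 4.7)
The strategy is to combine the local-time lower bound at the starting point (Lemma \ref{Lem:Loc10}) with the modulus of continuity of Lemma \ref{Lem:UTELT20}, first to handle the case $z = x$, and then to pass to arbitrary $z$ via the strong Markov property at $\tau_x := \inf\{t \ge 0 : X_t^{(N)} = x\}$.

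First I fix $\epsilon > 0$ as in Lemma \ref{Lem:Loc10}, choose $\lambda < \infty$ so that $\Delta(\lambda) \le 1/8$, and then pick $\kappa \in (0,1]$ small enough that $\lambda \varphi(\kappa) \le \epsilon$. Lemma \ref{Lem:Loc10} gives $P_x(\widehat{L}_{S_N}^{(N)}(x) \ge 2\epsilon) \ge 7/8$, whereas with this choice of $\lambda$ and $\kappa$, Lemma \ref{Lem:UTELT20} provides an event of $P_x$-probability at least $7/8$ on which $|\widehat{L}_t^{(N)}(y) - \widehat{L}_t^{(N)}(x)| \le \epsilon$ for all $t \le S_N$ and $y \in B_R^{(N)}(x,\kappa)$. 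A union bound then yields the baseline estimate
\[
P_x\Big(\min_{y \in B_R^{(N)}(x,\kappa)} \widehat{L}_{S_N}^{(N)}(y) \ge \epsilon\Big) \ge \tfrac{3}{4}.
\]
To handle general $z$, I apply the strong Markov property at $\tau_x$: on $\{\tau_x \le 3 S_N\}$, the increment $\widehat{L}_{\tau_x + S_N}^{(N)}(y) - \widehat{L}_{\tau_x}^{(N)}(y)$ has, conditionally on $\tau_x$, the same law as $\widehat{L}_{S_N}^{(N)}(y)$ under $P_x$. Monotonicity of $\widehat{L}^{(N)}$ in $t$ then gives
\[
P_z\Big(\min_{y \in B_R^{(N)}(x,\kappa)} \widehat{L}_{4 S_N}^{(N)}(y) \ge \epsilon\Big) \ge \tfrac{3}{4} \cdot P_z(\tau_x \le 3 S_N).
\]

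The main obstacle is therefore to bound $P_z(\tau_x \le 3 S_N)$ from below by $2/3$ uniformly in $z, x, N$. By the commute-time identity and Corollary \ref{Cor:Scale}, $E_z[\tau_x] \le 2\,\ERN(z,x)\,\mu^{(N)}(V(G^{(N)})) \le 2 S_N$, but a plain application of Markov's inequality yields only $P_z(\tau_x \le 3 S_N) \ge 1/3$. I would close this gap either by invoking the standard exponential-tail estimate for hitting times of reversible Markov chains (which under our commute-time bound gives geometric decay at scale $S_N$, equivalently obtained by iterating the strong Markov property from $\max_{z'} P_{z'}(\tau_x > 4 S_N) \le 1/2$), or, alternatively, by first hitting the slightly larger target $B_R^{(N)}(x,\kappa/2)$ and then transferring the local-time lower bound from the hitting location to all of $B_R^{(N)}(x,\kappa)$ through a second application of Lemma \ref{Lem:UTELT20} with the continuity radius enlarged from $\kappa$ to $3\kappa/2$. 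After a possible readjustment of numerical constants, either route supplies the required hitting-time bound and hence the claimed probability lower bound of $1/2$.
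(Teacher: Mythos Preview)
Your approach is exactly the paper's: Step~1 (the case $z=x$, yielding probability $\ge 3/4$) and Step~2 (strong Markov at $\tau_x$, combined with a Markov-inequality bound on $P_z(\tau_x > 3S_N)$) match the paper line for line. The only discrepancy is the ``obstacle'' you flag, which is not real.

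The commute-time identity \eqref{eq:TVest20} reads
\[
E_x[\tau_z^{(N)}] + E_z[\tau_x^{(N)}] = \ERN(z,x)\,\mu^{(N)}(G^{(N)}),
\]
with no factor of $2$ (that factor appears only for the \emph{lazy} walk, see \eqref{eq:TVest10b}, whereas $\widehat{L}^{(N)}$ and $\tau_x$ here refer to the non-lazy walk $X^{(N)}$). Since both expectations on the left are nonnegative and $\ERN(z,x) \le r(G^{(N)})$, one gets $E_z[\tau_x^{(N)}] \le S_N$, not $2S_N$. Markov's inequality then gives
\[
P_z(\tau_x^{(N)} \ge 3S_N) \le \tfrac{1}{3}, \qquad \text{so} \qquad P_z(\tau_x^{(N)} \le 3S_N) \ge \tfrac{2}{3},
\]
and $\tfrac{2}{3} \cdot \tfrac{3}{4} = \tfrac{1}{2}$ closes the argument immediately. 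Your proposed exponential-tail iteration and the enlarged-ball detour are therefore unnecessary. (Corollary~\ref{Cor:Scale} is also not needed here; only the definition of $S_N$ is used.)
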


\begin{proof}  
{\sf Step 1.}  Taking $\epsilon>0$ as in Lemma \ref{Lem:Loc10}, we
 first show that for some $\kappa>0$,      
        \begin{align*}
              \inf_{N \ge 1}  \inf_{x \in V(G^{(N)})} 
               P_x \Big(  \min_{y \in B_R^{(N)}(x,\kappa)} 
                        \left\{  \widehat{L}_{S_N}^{(N)} (y) \right\}   
                        \ge \epsilon \,\, \Big)  
                            \ge \frac{3}{4} \,.
        \end{align*} 
To this end considering
Lemma \ref{Lem:UTELT20} for $\lambda<\infty$ such that
$\Delta (\lambda) < 2^{-3}$
and $\kappa >0$ such that $\lambda \varphi(\kappa) \le \epsilon$,
 we obtain that, for all $N$ and any $z \in V(G^{(N)})$,
      \begin{align*}
               P_z \Big( \max_{x,
              y \in B_R^{(N)}(x,\kappa)}   
             \Big\{ |\widehat{L}_{S_N }^{(N)} (x) - \widehat{L}_{S_N
             }^{(N)} (y) | \Big\} \ge \epsilon \Big)  \le \frac{1}{8}\,.
      \end{align*}
Consequently, by Lemma \ref{Lem:Loc10},
    \begin{align*} 
           \frac{7}{8} \le  
           P_x \Big(  \widehat{L}_{ S_N }^{(N)} (x) 
          \ge 2 \epsilon \Big) & \le \frac{1}{8}  
                    +  P_x  \Big(  \widehat{L}_{ S_N }^{(N)} (x)  \ge 2 \epsilon,     
                             \max_{
              y \in B_R^{(N)}(x,\kappa)}   
            \Big\{ |\widehat{L}_{ S_N }^{(N)} (x) - \widehat{L}_{ S_N }^{(N)} (y) |  \Big\} \le \epsilon \Big)    \\  
           & \le \frac{1}{8}
                    +  P_x  \Big(  \min_{y \in B_R^{(N)}(x,\kappa)} 
                       \Big\{ \widehat{L}_{S_N}^{(N)} (y)   
                        \Big\} \ge \epsilon \Big)\,,     
    \end{align*} 
thereby completing Step 1.  

\noindent{\sf Step 2.} Turning to prove \eqref{eq:Loc31} when $z \ne x$, 
let $\tau_x^{(N)} := \inf \{ t \ge 0 \mid X_t^{(N)} = x \}$  
denote the first hitting time of $x \in V(G^{(N)})$ by the random walk.
Recall the commute time identity (see \cite[Proposition 10.6]{LPW08}), 
that for any $N$ and $x \ne z$ in $V(G^{(N)})$,
\begin{align}\label{eq:TVest20}
E_x \big[  \tau^{(N)}_z \big] + 
E_z \big[  \tau^{(N)}_x \big] =
\ERN (z,x)  \mu^{(N)}(G^{(N)}) \,.
\end{align}
Hence,  
\begin{align}\label{eq:Loc34}
P_z \left( \tau_x^{(N)} \ge 3 S_N \right)  
& \le   \frac{1}{3 \, S_N} E_z \big[  \tau^{(N)}_x \big] 
\le \frac{1}{3} 
          \end{align}             
so by the strong Markov property
at $\tau_x^{(N)}$, we see that for any $z \in V(G^{(N)})$, 
      \begin{align*}
           P_z  \Big( 
          \min_{y \in B_R^{(N)}(x,\kappa)} 
          \{ \widehat{L}^{(N)}_{4 S_N} (y) \} \ge \epsilon
                         \Big)       
          &  \ge   \sum_{t=0}^{3 S_N} 
          P_z  \Big( \min_{y \in B_R^{(N)}(x,\kappa)}
           \{ \widehat{L}^{(N)}_{4 S_N} (y) - 
           \widehat{L}^{(N)}_{t} (y) \}  \ge \epsilon  \,, \; 
         \tau^{(N)}_x = t \Big) \\    
          &   =   
          \sum_{t=0}^{3 S_N} P_z  \big(   
          \tau^{(N)}_x =t) P_x \Big( 
          \min_{y \in B_R^{(N)}(x,\kappa)}
          \Big\{ \widehat{L}^{(N)}_{4 S_N - t} (y) 
          \Big\} \ge \epsilon 
                          \Big)      \\
          & \ge
          P_z \left(  \tau^{(N)}_x \le 3 S_N \right)  
P_x  \Big(  
                    \min_{y \in B_R^{(N)}(x,\kappa)} 
                       \Big\{ \widehat{L}^{(N)}_{S_N} (y)   
                       \Big\} \ge \epsilon
                     \Big)        
             \ge \frac{1}{2} \,,    
      \end{align*}
by combining Step 1 and \eqref{eq:Loc34}.  
\end{proof}

Denoting the range of the random walk 
by ${\mbox{Range}}^{(N)}_t := \{ X_0^{(N)},  X_1^{(N)}, \ldots,  
X_{t-1}^{(N)} \}$, 
we have the following consequence of Proposition \ref{Prop:Loc30}.
\begin{Corollary}  \label{Cor:Loc50}
If Assumptions \ref{Ass:Weight} and \ref{Ass:UPHI} hold, 
then for some $\kappa>0$ and any $t$,     
\begin{align*}
\sup_{N \ge 1} \sup_{x,z \in V(G^{(N)})} 
P_z \left(  {\mbox{Range}}_{t}^{(N)}   \not\supseteq B_R^{(N)} (x, \kappa)   \right) \le 2^{1 - t/(4 S_N)} \,.
\end{align*}
\end{Corollary}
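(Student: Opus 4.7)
The plan is to deduce Corollary \ref{Cor:Loc50} from Proposition \ref{Prop:Loc30} through two elementary steps: first converting the positive-local-time event into a covering event for a single time block of length $4 S_N$, and then iterating via the Markov property to get geometric decay in the number of blocks.

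First, I would fix the same $\epsilon>0$ and $\kappa>0$ as in Proposition \ref{Prop:Loc30}. The key observation is that since $\widehat{L}_t^{(N)}(y) = (r(G^{(N)}) \mu_y^{(N)})^{-1} \sum_{s<t} 1_y(X_s^{(N)})$ is a positive multiple of the number of visits to $y$, the event $\{\min_{y \in B_R^{(N)}(x,\kappa)} \widehat{L}_{4 S_N}^{(N)}(y) \ge \epsilon\}$ is contained in $\{\text{Range}_{4 S_N}^{(N)} \supseteq B_R^{(N)}(x,\kappa)\}$. Consequently Proposition \ref{Prop:Loc30} yields the one-block bound
\[
\sup_{N \ge 1} \sup_{x,z \in V(G^{(N)})} P_z\bigl( \text{Range}_{4 S_N}^{(N)} \not\supseteq B_R^{(N)}(x,\kappa) \bigr) \le \tfrac{1}{2}\,.
\]

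Next I would iterate. Write $k := \lfloor t/(4 S_N) \rfloor$ and partition $[0,t)$ into $k$ consecutive blocks of length $4 S_N$ (plus a remainder). For the event that $B_R^{(N)}(x,\kappa)$ is \emph{not} covered by time $t$ to occur, it must fail to be covered during each of the $k$ blocks. Conditioning on the position $X_{4 j S_N}^{(N)}$ at the start of the $(j+1)$-st block and applying the Markov property together with the one-block bound uniformly in the starting point gives inductively
\[
\sup_{N \ge 1}\sup_{x,z \in V(G^{(N)})} P_z\bigl( \text{Range}_t^{(N)} \not\supseteq B_R^{(N)}(x,\kappa) \bigr) \le 2^{-k} \le 2^{1 - t/(4 S_N)}\,,
\]
which is exactly the bound claimed.

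There is no real obstacle here; the conceptual content was already packed into Proposition \ref{Prop:Loc30}. The only thing to be careful about is that the one-block bound is \emph{uniform} in the starting vertex $z$, which is precisely what allows the Markov-property iteration to telescope into a geometric factor, and it is this uniformity that motivates the slightly awkward $2^{1-t/(4S_N)}$ form (the ``$+1$'' in the exponent absorbing the floor).
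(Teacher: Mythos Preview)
Your proof is correct and follows essentially the same approach as the paper: derive the one-block covering bound $P_z(\mathrm{Range}_{4S_N}^{(N)} \not\supseteq B_R^{(N)}(x,\kappa)) \le \tfrac12$ from Proposition \ref{Prop:Loc30}, then iterate via the Markov property over $k=\lfloor t/(4S_N)\rfloor$ blocks and use monotonicity of the range. The paper's presentation is slightly terser but the logic is identical.
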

\begin{proof} Taking $\kappa>0$ as in 
Proposition \ref{Prop:Loc30}, we have that 
for all $N$ and $x, z \in V(G^{(N)})$,
             \begin{align*}
P_z  \left(   {\mbox{Range}}_{4 S_N}^{(N)} \supseteq B_R^{(N)} (x,\kappa) \right)  \ge \frac{1}{2} \,. 
\end{align*}  
Applying
the Markov property at times $\{4 i S_N\}$ for $i=1,\ldots,k-1$, it follows that 
\begin{align*}
P_z  \left(   {\mbox{Range}}_{4 k S_N }^{(N)}   \not\supseteq B_R^{(N)} (x, \kappa)   \right)  \le 2^{-k} 
           \end{align*}  
and we are done, since $t \mapsto {\mbox{Range}}^{(N)}_t$ is non-decreasing. 
\end{proof}

\begin{proof}[Proof of Proposition \ref{Thm:UCT10}]    
From Proposition \ref{Prop:ERN}, if 
$\creff^2 \, \eta^{d_w-d_f} \le \kappa$, then for any
$N$ and $x \in V(G^{(N)})$,
\[
B^{(N)}(x,\eta \, R_N) \subseteq B^{(N)}_R(x,\kappa) \,.
\]
Setting such $\eta=\eta(\creff,\kappa)>0$ we deduce from 
Proposition \ref{Prop:Covering} that for any $\kappa>0$ there
exist $L=L(\kappa)$ finite
and $x_1, \ldots , x_L \in V(G^{(N)})$, such that for all $N$,
\begin{align*}
V(G^{(N)} )  = \bigcup_{i=1}^L  B_R^{(N)} (x_i,\kappa) \,.  
\end{align*}
We embed the walk $X^{(N)}_s$ within the sample path $s \mapsto \tilde{X}^{(N)}_s$ 
of its lazy counterpart, such that the number of steps $M_t$ made 
by the lazy walk during the first $t$ steps of $\{X^{(N)}_s\}$ 
is the sum of $t$ i.i.d. Geometric($1/2$) variables, 
which are further independent of $\{X^{(N)}_s\}$. Since the
range of the lazy random walk at time $M_t$ is then
$\mbox{Range}^{(N)}_{t}$, we have for any $t$, $N$ and $z \in V(G^{(N)})$ 
\begin{align*} 
P_z  \left( \tau_{\rm{cov}} (G^{(N)}) > 3 t  \right)     
&  \le P (M_t > 3 t) + 
 \sum_{i=1}^L  P_z  \left(  {\mbox{Range}}_{t}^{(N)} \not\supseteq B_R^{(N)} (x_i, \kappa)   \right)  \,.
\end{align*}
By Cramer-Chernoff bound, the first term on the \abbr{rhs} is at
most $\theta^t$ for some $\theta<1$. With $L=L(\kappa)$ independent of $N$, $z$, and
$S_N \le c_\star T_N$ (see Corollary 
\ref{Cor:Scale}), we thus reach 
\eqref{eq:tcov-spread} upon choosing $\kappa>0$ 
as in Corollary \ref{Cor:Loc50} and 
$c_0 \ge 2L(\kappa)+1$ such that $e^{-3/c_0} \ge \max(\theta,2^{-1/(4c_\star)})$. 
\end{proof}

\section{Lamplighter mixing: Theorem 
\ref{Thm:Main} and Proposition \ref{Prop:LBTV10}}   \label{Sec:Rec}
  
\begin{proof}[Proof of Proposition \ref{Prop:LBTV10}]
\abbr{wlog} we may and do assume that $\bm{x_0} = (\bm{0},x_0)$ for 
some $x_0 \in V(G^{(N)})$. Let 
       \begin{align*}
A^{\ast}_N := \Big\{ (f,x) \in  V( \mathbb{Z}_2 \wr G^{(N)}) \; \big| \;
                           \exists y \in V(G^{(N)}) 
                         \text{ such that } f(b) \equiv 0, \quad     
 \forall b \in B^{(N)} (y, r_N) 
                        \Big\} \,,
       \end{align*}
where taking $r_N := \lceil (2d_f \tilde{c} \, \cv  \log_2 R_N)^{1/d_f} \rceil$ 
we have thanks to \eqref{eq:Card} and the $d_f$-set condition, that 
\[
\sharp B^{(N)}(y,r_N) \ge \tilde{c}^{-1} V^{(N)}(y,r_N) \ge 
(\tilde{c} \, \cv)^{-1} (r_N)^{d_f} \ge 2 d_f \log_2 R_N \,.
\]
By the same reasoning
$\sharp  V(G^{(N)}) \le \tilde{c} \, \cv (R_N)^{d_f}$, 
so for the invariant distribution
$\pi^{\ast} ( \cdot; G^{(N)})$ of 
the lamplighter chain $Y^{(N)}$ on $\mathbb{Z}_2 \wr G^{(N)}$
\begin{equation}\label{eq:LBTV14}
\pi^{\ast} (A^{\ast}_N ; G^{(N)})   \le   \sum_{y \in V(G^{(N)})} 
2^{-\sharp B^{(N)}(y,r_N)} \le
\tilde{c} \, \cv (R_N)^{-d_f} \,. 
\end{equation}
Part of our $d_f$-set condition is having $R_N \to \infty$, so 
there exists $N_1$ finite such that $R_0 \le r_N \le \frac{1}{4} R_N$ 
for $R_0$ of Corollary \ref{Cor:HKEN} and any $N \ge N_1$.  
Since $\max_y \{ d^{(N)}(x_0,y)\} \ge \frac{1}{2} R_N$ for any 
$x_0 \in V(G^{(N)})$, whenever $N \ge N_1$ the event
\[
\tilde{\Gamma}^{(N)}_t :=  \Big\{  \max_{0 \le s \le t} d(\tilde{X}_0^{(N)}, \tilde{X}_s^{(N)}) \le \frac{1}{4} R_N \Big\}
\]
implies that $\{ Y_t^{(N)} \in A^{\ast}_N \}$. Consequently, for any 
such $N$ we have by \eqref{eq:LBTV14}
that 
\begin{align} \label{eq:LBTV12}
                \max_{\bm{x} \in V(\mathbb{Z}_2 \wr G^{(N)}) }  \| P_t^{\ast}  ( \bm{x}, \cdot ; G^{(N)} ) - \pi^{\ast} (\cdot ; G^{(N)}) \|_{\TV}  
                  & \ge      P_{ \bm{x}_0}^{\ast}  ( Y_t^{(N)} \in A^{\ast}_N ; G^{(N)}) - \pi^{\ast} (A^{\ast}_N; G^{(N)})    \notag    \\
               &    \ge       P_{x_0} (\tilde{\Gamma}^{(N)}_t;G^{(N)}) - 
\tilde{c} \, \cv (R_N)^{-d_f} \, . 
       \end{align}
Let $c_1 := 4^{d_w} c_2$ for $c_2<\infty$ of Corollary \ref{Cor:HKEN}.
Then, by Corollary \ref{Cor:HKEN} at  
$r=\frac{1}{4} R_N$, we have for all $N \ge N_1$ 
\begin{align*} 
             & P_{x_0} \big(  
             \tilde{\Gamma}^{(N)}_{t}; G^{(N)}
              \big)  
                  \ge  P_{x} \Big(   \max_{0 \le s \le t} 
                  d^{(N)} ( X_0^{(N)}, X^{(N)}_s)  \le \frac{1}{4}  R_N \Big)      
                \ge c_1^{-1} e^{- c_1 t/T_N} \,, 
       \end{align*}
which together with \eqref{eq:LBTV12} completes the proof.
\end{proof}

As shown next, at $t \gg S_N$ the lazy walk is near 
equilibrium (in total variation), and the total 
variation distance of $P^{\ast}_t (\bm{x}, \cdot; G^{(N)})$ 
from its equilibrium law
is then 
controlled by the tail probabilities of $\SCT (G^{(N)})$.
\begin{Proposition}   \label{Prop:LampTV} 
For any $t$, weighted graphs $(G^{(N)},\mu^{(N)})$
and $\bm{x} \in V(\mathbb{Z}_2 \wr G^{(N)})$,
\begin{align}
 \| P_t^{\ast}  ( \bm{x} , \cdot ; G^{(N)} ) - \pi^{\ast} (\cdot ;G^{(N)}) \|_{\TV}      
 & \le P_x (\SCT (G^{(N)}) > t) 
 + \| \tilde{P}_t (x, \cdot ; G^{(N)}) - \pi ( \cdot ; G^{(N)}) \|_{\TV}  
  \notag \\
& \le P_x (\SCT (G^{(N)}) > t) + \frac{\sqrt{S_N}}{2 \sqrt{t}}  \,.
\label{eq:tv-walk}
\end{align}  
\end{Proposition}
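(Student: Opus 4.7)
The plan is to prove the two inequalities in \eqref{eq:tv-walk} separately: the first via the switch-walk-switch structure of the lamplighter chain, and the second via an $L^2$ contraction estimate tied to the effective resistance.

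For the first inequality I would exploit the observation that on the event $\{\SCTN \le t\}$, every vertex of $V(G^{(N)})$ has been visited by the base walk, so the lamp at each site has received at least one independent Bernoulli$(1/2)$ switch. Consequently, conditionally on $\{\SCTN \le t\}$ and on the trajectory $(\tilde X^{(N)}_0,\ldots,\tilde X^{(N)}_t)$, the configuration $f_t$ is uniformly distributed on $\mathbb Z_2^{V(G^{(N)})}$ and independent of $\tilde X^{(N)}_t$. Splitting on $\{\SCTN \le t\}$ versus $\{\SCTN > t\}$,
\[
P_t^{\ast}(\bm{x},(f,y);G^{(N)}) = 2^{-\sharp V(G^{(N)})} P_x(\tilde X^{(N)}_t = y,\,\SCTN \le t) + P_x(\tilde X^{(N)}_t = y,\, f_t=f,\,\SCTN > t),
\]
so subtracting $\pi^{\ast}(f,y;G^{(N)}) = 2^{-\sharp V(G^{(N)})}\pi^{(N)}(y)$ and summing $|\cdot|$ over $(f,y)$ via the triangle inequality, the $\{\SCTN > t\}$ contributions aggregate to at most $P_x(\SCTN > t)$, while the $\{\SCTN \le t\}$ contributions collapse (after summing in $f$) to $\|\tilde P_t(x,\cdot;G^{(N)}) - \pi^{(N)}\|_{\TV}$, yielding the first bound.

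For the second inequality I would reduce the total variation distance to an on-diagonal heat kernel bound. Writing $\tilde P_t^{(N)}(x,y) - \pi^{(N)}(y) = \mu_y^{(N)}[\tilde p_t^{(N)}(x,y) - 1/\mu^{(N)}(V(G^{(N)}))]$, Cauchy–Schwarz with respect to $\mu^{(N)}$ combined with the identity $\sum_y \mu_y^{(N)}\tilde p_t^{(N)}(x,y)^2 = \tilde p_{2t}^{(N)}(x,x)$ (Chapman–Kolmogorov plus symmetry of $\tilde p^{(N)}$) gives
\[
\|\tilde P_t(x,\cdot;G^{(N)}) - \pi^{(N)}\|_{\TV}^2 \le \tfrac14\,\mu^{(N)}(V(G^{(N)}))\bigl[\tilde p^{(N)}_{2t}(x,x) - 1/\mu^{(N)}(V(G^{(N)}))\bigr],
\]
so the task reduces to proving $\tilde P^{(N)}_{2t}(x,x) - \pi^{(N)}(x) \le \pi^{(N)}(x)S_N/t$. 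I would combine two classical facts for lazy reversible chains: (i) since the spectrum of $\tilde P^{(N)}$ lies in $[0,1]$, the spectral expansion shows $\tilde P^{(N)}_{2t}(x,x) - \pi^{(N)}(x)$ is nonnegative and nonincreasing in $t$, so by averaging $2t[\tilde P^{(N)}_{2t}(x,x) - \pi^{(N)}(x)] \le \sum_{s\ge 0}[\tilde P^{(N)}_s(x,x) - \pi^{(N)}(x)] = \pi^{(N)}(x)\,\tilde{\mathbb E}_{\pi^{(N)}}[\tau_x^{(N)}]$, the last equality being the Kemeny identity for the fundamental matrix; and (ii) the commute time identity \eqref{eq:TVest20} applied to the non-lazy walk, together with the fact that lazy hitting times equal twice the non-lazy ones, yields $\tilde{\mathbb E}_{y}[\tau_x^{(N)}] \le 2\mu^{(N)}(V(G^{(N)}))\ERN(x,y) \le 2S_N$ for every $y$, and hence $\tilde{\mathbb E}_{\pi^{(N)}}[\tau_x^{(N)}] \le 2S_N$. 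Putting (i) and (ii) together gives the required bound.

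The main delicate point I expect is careful bookkeeping of the factor of $2$ introduced by laziness. This factor appears both in the lazy version of the commute time identity (each lazy step is a no-op with probability $1/2$) and in the averaging $2t\cdot(\cdot) \le \sum_{s=0}^{2t-1}(\cdot)$, and they must line up exactly in order to reproduce the advertised constant $\sqrt{S_N}/(2\sqrt t)$ rather than some larger multiple. Otherwise every step is standard.
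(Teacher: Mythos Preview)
Your approach matches the paper's: the first inequality via splitting on $\{\SCTN \le t\}$, and the second via the commute time identity. One small bookkeeping slip in your first part: after summing in $f$, the ``$\SCTN \le t$'' contribution is $\sum_y |P_x(\tilde X^{(N)}_t = y,\,\SCTN \le t) - \pi^{(N)}(y)|$, which is not literally $2\|\tilde P_t(x,\cdot)-\pi^{(N)}\|_{\TV}$ since the event $\{\SCTN\le t\}$ is still inside. The paper sidesteps this by using the $[\,\cdot\,]_+$ representation of total variation, so that $[P_x(\tilde X^{(N)}_t=y,\,\SCTN\le t)-\pi^{(N)}(y)]_+ \le [P_x(\tilde X^{(N)}_t=y)-\pi^{(N)}(y)]_+$; alternatively one more triangle inequality in your version gives the same bound. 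For the second inequality the paper simply quotes \cite[Lemma 4.1]{NP08} for $\|\tilde P_t(x,\cdot)-\pi^{(N)}\|_{\TV}^2 \le (8t)^{-1}\max_z E_z[\tilde\tau^{(N)}_x]$ and then bounds the hitting time by $2S_N$ via \eqref{eq:TVest20}; your Cauchy--Schwarz plus spectral monotonicity plus fundamental-matrix identity is precisely a proof of that lemma, with $\tilde E_{\pi^{(N)}}[\tau^{(N)}_x]$ in place of the maximum, so the constants line up as you anticipated.
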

\begin{proof} Using the uniform 
(invariant) distribution of lamp configurations at
$t \ge \SCT (G^{(N)})$, yields  
\begin{align*}   
\| P_t^{\ast}  ( \bm{x} , \cdot ; G^{(N)} ) - \pi^{\ast} (\cdot ;G^{(N)}) \|_{\TV}  
              & \le \quad
 \sum_{ \bm{y} \in V(\mathbb{Z}_2 \wr G^{(N)}) }  P_{\bm{x}}^{\ast} (Y_t^{(N)} = \bm{y}, \SCT (G^{(N)}) > t)
\notag \\ 
& \quad + \sum_{ \bm{y} \in V(\mathbb{Z}_2 \wr G^{(N)}) }  [P_{\bm{x}}^{\ast} (Y_t^{(N)} = \bm{y}, t \ge \SCT (G^{(N)})) -  \pi^{\ast} (\bm{y} ; G^{(N)} ) ]_+  \notag \\
& \le P_{x} (\SCT (G^{(N)}) > t) +   \sum_{y \in V(G^{(N)})} [ P_x ( \tilde{X}_t^{(N)} = y) -  \pi^{(N)} (y) ]_+  \,.
\end{align*} 
Applying the definition of total variation 
distance for $\tilde{X} = \{ \tilde{X}_t \}_{t \ge 0}$ yields 
the first inequality in \eqref{eq:tv-walk}. Next, let
$\tilde{\tau}_x^{(N)} := \min \{ t \ge 0 \mid \tilde{X}_t^{(N)} = x \}$. 
By the embedding of $X^{(N)}$ within $\tilde{X}^{(N)}$
(as in the proof of Proposition \ref{Thm:UCT10}), 
and the commute time identity (see \eqref{eq:TVest20}), 
we have that for all $N$ and $x, z \in V(G^{(N)})$   
\begin{align}\label{eq:TVest10b}
E_z \big[ \tilde{\tau}^{(N)}_x \big] = 2 E_z \big[  \tau^{(N)}_x \big] \le 2 S_N \,.
\end{align}
While proving \cite[Lemma 4.1]{NP08}, it shown that 
for all $N$, $t$ and $x \in V(G^{(N)})$,   
\begin{align}\label{eq:TVest10}
\Big(\| \tilde{P}_t (x, \cdot ; G^{(N)}) - \pi ( \cdot ; G^{(N)}) \|_{\TV}\Big)^2   
\le  \frac{1}{8 t} \max_{z \in V(G^{(N)})}  
                         \big\{ E_z [  \tilde{\tau}^{(N)}_x] \big\} 
\end{align}
and we get the second inequality in \eqref{eq:tv-walk} by 
combining \eqref{eq:TVest10b} and \eqref{eq:TVest10}. 
\end{proof}   

\subsection{The strongly recurrent case: $d_f < d_w$}  \label{Sec:Recsub}

For $d_f<d_w$ we get Theorem \ref{Thm:Main}(a) by 
combining the lower bounds of Proposition \ref{Prop:LBTV10} 
with the upper bounds of Propositions \ref{Thm:UCT10} and 
\ref{Prop:LampTV}. 
\begin{proof}[Proof of Theorem \ref{Thm:Main}(a).]
Since $R_N \to \infty$, we deduce from
Proposition \ref{Prop:LBTV10} that for any $\epsilon \in (0,1)$,
\begin{equation}\label{eq:lbd-tmix}
\liminf_{N \to \infty} \Big\{ 
\frac{\MT (\epsilon ; \mathbb{Z}_2 \wr G^{(N)})}{T_N} \Big\} \ge 
- c_1^{-1} \log (c_1 \epsilon) \,.
\end{equation}
In contrast, with $S_N \le c_\star T_N$ and 
$\gamma=\gamma(\epsilon)$ denoting the unique solution of 
\begin{equation*} 
\epsilon = c_0 e^{-\gamma/c_0} + \frac{\sqrt{c_\star}}{2\sqrt{\gamma}} \,,
\end{equation*}
we get 
from Propositions \ref{Thm:UCT10} and \ref{Prop:LampTV} that 
\begin{equation}\label{eq:ubd-tmix}
\limsup_{N \to \infty} \Big\{ 
\frac{\MT (\epsilon ; \mathbb{Z}_2 \wr  G^{(N)})}{T_N} \Big\} \le \gamma(\epsilon) \,.
\end{equation}
The \abbr{rhs} of \eqref{eq:lbd-tmix} blows up as $\epsilon \to 0$, 
while the \abbr{rhs} of \eqref{eq:ubd-tmix} is uniformly bounded 
above for $\epsilon \in [\frac{1}{2},1]$. Hence, there can be no 
cutoff for these lamplighter chains. 
\end{proof}
\begin{Remark}\label{rem:mix}
In view of Proposition \ref{Thm:UCT10}, here
$\MT(\epsilon; \mathbb{Z}_2 \wr G^{(N)})/\CT (G^{(N)}) \gg 1$ for small 
$\epsilon$. From Section \ref{Sec:CT} 
we also learn that, when $d_f<d_w$, the lamplighter chains 
have no mixing cutoff 
mainly 
because the laws of
$\tau_{\rm{cov}} (G^{(N)})/\CT(G^{(N)})$
do not concentrate as $N \to \infty$ (unlike
the transient case of $d_f>d_w$).
\end{Remark}

\subsection{The transient case: $d_f > d_w$}      \label{Sec:Tran}

As mentioned before, in case $d_f>d_w$, we establish the cutoff 
for total-variation mixing time of the lamplighter chains by
verifying that our weighted graphs 
$\{ (G^{(N)},\mu^{(N)})  \}_{N \ge 1}$ satisfy the 
sufficient conditions from \cite[Theorem 1.5]{MP12}.  To this end,  
recall the uniform mixing times $\MT^U (G^{(N)})$ and 
Green functions $\tilde{g}^{(N)}(\cdot,\cdot)$ that correspond
to $\epsilon=\frac{1}{4}$ in \eqref{eq:Umix} and
\eqref{eq:Green}, respectively. In \cite{MP12},
uniformly elliptic, finite weighted graphs 
$\{(G^{(N)},\mu^{(N)}) \}_{N \ge 1}$ are called 
\emph{uniformly locally transient} if for all $N$,
\begin{align*}   
g (x,A;G^{(N)}) := \sum_{y \in A} \tilde{g}^{(N)} (x,y)  
\le \rho (d^{(N)} (x,A), \Diam \{A\} ) \,,\qquad  
\forall x \in V (G^{(N)}), A \subseteq V(G^{(N)}) \,,
\end{align*} 
where $\rho : \R_+ \times \R_+ \to \R_+$ is such that 
$\rho(r,s) \downarrow 0$ as $r \to \infty$, for each fixed $s$.
Further setting 
\begin{align*}
\bar{ \Delta } (G)  := \max_{x \in V}  \{ \mu_x  \}, \quad  \underline{\Delta} (G) := \min_{x \in V}  \{ \mu_x \},  
\quad  \Delta (G) := \frac{ \bar{ \Delta } (G) }{  \underline{\Delta} (G) }  \,,
\end{align*}  
the following two assumptions are made in \cite{MP12}. 
\begin{Assumption}[Transience] $~$ \label{Ass:Tran}
The finite weighted graphs $\{ (G^{(N)}, \mu^{(N)}) \}_{N \ge 1}$ are such 
that for any fixed $r<\infty$, as $N \to \infty$,
\begin{enumerate}  \renewcommand{\labelenumi}{(\alph{enumi})} 
\item $\mu^{(N)} (G^{(N)})  \to \infty$.  
\item $\sup_{N} \{ \Delta (G^{(N)}) \} < \infty$.    
\item $\sup_{x} \{ \log V^{(N)} (x,r) \} = o (\log \mu^{(N)}  (G^{(N)}))$. 
\item $ \MT^U (G^{(N)})  (\bar{\Delta} (G^{(N)}))^r = o ( \mu^{(N)}  (G^{(N)}))$. 
\end{enumerate}  
\end{Assumption}
\begin{Assumption}[Uniform Harnack inequalities] $~$  \label{Ass:Har}
  For some $C( \alpha) < \infty$ and all  
  $N,r \ge 1$, $\alpha >1$,
  $x \in V(G^{(N)})$, if $h(\cdot)$ is a positive $\mu^{(N)}$-harmonic 
  on $B^{(N)} (x,\alpha r)$, then
          \begin{align*}
                  \max_{y \in B^{(N)} (x,r) } \{ h(y) \} \le  
                  C(\alpha) \min_{ y \in B^{(N)} (x,r) } \{ h (y) \} \,.  
         \end{align*}
\end{Assumption}
We next prove Theorem \ref{Thm:Main}(b) by relying on the following 
restatement of \cite[Theorem 1.5]{MP12}.
\begin{Theorem}\label{Thm:MP}
If uniformly locally transient  
$\{ (G^{(N)}, \mu^{(N)} ) \}_{N \ge 1} $ satisfy 
Assumptions \ref{Ass:Tran} and \ref{Ass:Har}, 
then the lamplighter chains $\{Y^{(N)}\}_{N \ge 1}$ 
have cutoff at the threshold $ \frac{1}{2} \CT (G^{(N)})$.
\end{Theorem}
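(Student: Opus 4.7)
The statement is a restatement of \cite[Theorem 1.5]{MP12}, so the plan is to reproduce the two-sided strategy of Miller--Peres: prove separately the matching bounds
\[
(1-o(1))\,\tfrac12 \CT(G^{(N)}) \;\le\; \MT(\epsilon;\mathbb{Z}_2\wr G^{(N)}) \;\le\; (1+o(1))\,\tfrac12\CT(G^{(N)}), \qquad \forall \epsilon\in(0,1),
\]
which together with the monotonicity of $\MT(\epsilon;\cdot)$ in $\epsilon$ delivers cutoff at the claimed threshold. The four pieces of information at our disposal are uniform local transience (with its Green-function bound $\rho(\cdot,\cdot)$), the volume/mixing control of Assumption~\ref{Ass:Tran}, the bounded conductance ratio $\Delta(G^{(N)})$, and the uniform elliptic Harnack inequality of Assumption~\ref{Ass:Har}.

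For the lower bound, the plan is to use the unvisited set as a distinguishing statistic. Starting from $\bm{x}_0=(\bm 0,x_0)$ and setting $t_N=(\tfrac12-\delta)\CT(G^{(N)})$, consider $U_{t_N}:=V(G^{(N)})\setminus\mathrm{Range}_{t_N}^{(N)}$; every lamp in $U_{t_N}$ is deterministically off under $P_{t_N}^{\ast}(\bm x_0,\cdot\,;G^{(N)})$. By a first/second moment calculation on hitting times of well-separated vertices, in which correlations are controlled by the uniform local transience bound $g(x,A;G^{(N)})\le\rho(d^{(N)}(x,A),\Diam\{A\})$ and the Harnack inequality is used to replace individual hitting probabilities by capacity ratios, one shows $|U_{t_N}|\to\infty$ in probability at a rate determined by $\delta$. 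Under $\pi^{\ast}(\cdot;G^{(N)})$ the lamp values are independent uniform bits, so conditionally on $|U_{t_N}|=m$ the number of off-lamps at positions in $U_{t_N}$ is $\mathrm{Binomial}(m,\tfrac12)$, whereas under $P^{\ast}_{t_N}$ it equals $m$. The gap $\Theta(m)$ dwarfs the stationary deviation $\Theta(\sqrt{m})$, so the total-variation distance stays bounded away from $0$, giving the lower bound.

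The matching upper bound is the main obstacle and the technical heart of \cite{MP12}. At time $t_N=(\tfrac12+\delta)\CT(G^{(N)})$, Assumption~\ref{Ass:Tran}(d) ensures that $\tilde X^{(N)}$ itself is close to $\pi^{(N)}$, so what must be shown is that conditionally on the trajectory the joint law of the lamps is within $o(1)$ in total variation of uniform on $\mathbb Z_2^{V(G^{(N)})}$. The plan is to reduce this, via a coupling with a sequence of stationary excursions between widely separated time points, to the two statements: (i) the unvisited set $U_{t_N}$ has the same distribution as a random subset chosen by independently including each $x\in V(G^{(N)})$ with probability matching that under $\pi^{\ast}$, up to $o(1)$ in TV; and (ii) for visited vertices the parity of the number of visits is essentially uniform and approximately independent across vertices. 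Both rely crucially on the product-form estimate
\[
P(\text{$A\subset V(G^{(N)})$ is not hit in a window of length $\asymp\CT$}) = \prod_{a\in A}\bigl(1-p_a\bigr)\bigl(1+o(1)\bigr),
\]
which in turn is driven by the uniform local transience (to bound cross-terms $g(a,A\setminus\{a\};G^{(N)})$ uniformly) and the Harnack inequality (to produce equivalences between local capacities and local Green functions, stabilising the $p_a$'s). Combining (i) and (ii) with Proposition~\ref{Prop:LampTV} applied at $t_N$ closes the upper bound, and hence yields cutoff at $\tfrac12\CT(G^{(N)})$.
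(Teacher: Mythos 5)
First, a point of reference: the paper does not actually prove this statement. Theorem \ref{Thm:MP} is a verbatim restatement of \cite[Theorem 1.5]{MP12}, accompanied only by the remark that the Miller--Peres derivation, written there for lazy \abbr{srw}, goes through for uniformly elliptic weighted graphs under the modified Assumptions \ref{Ass:Tran} and \ref{Ass:Har}. Your proposal therefore attempts more than the paper does, namely to reconstruct the two-sided Miller--Peres argument; your upper-bound outline (reduce to showing that the uncovered set at time $(\frac12+\delta)\CT(G^{(N)})$ is within $o(1)$ in total variation of an i.i.d.\ Bernoulli subset, with uniform local transience controlling the cross terms in the product-form non-hitting estimate and Harnack stabilising the one-point hitting probabilities) is a fair description of what \cite{MP12} actually proves. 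One small simplification: for the switch--walk--switch chain your item (ii) is vacuous, since conditionally on the trajectory the lamps at visited sites are exactly i.i.d.\ uniform (each is refreshed at its last visit); parity of the number of visits matters only for other lamplighter variants.

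The lower bound, however, contains a genuine error. Establishing only that $|U_{t_N}|\to\infty$ in probability is not enough, and the comparison you make ``conditionally on $|U_{t_N}|=m$'' is not a legitimate total-variation bound: $U_{t_N}$ is a function of the trajectory, not of the state $(f_{t_N},\tilde X^{(N)}_{t_N})$, so ``the number of off lamps at positions in $U_{t_N}$'' is not an observable of the chain at time $t_N$, and $U_{t_N}$ has no meaning under $\pi^{\ast}$. The correct distinguishing statistic is the total number of off lamps $W(f)=\sharp\{v\mid f(v)=0\}$; under $\pi^{\ast}$ this is ${\rm Binomial}(\sharp V(G^{(N)}),\frac12)$, with fluctuations of order $\sqrt{\sharp V(G^{(N)})}$, while under $P^{\ast}_{t_N}$ its mean is shifted upward by $\frac12\,E|U_{t_N}|$. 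Hence one must show $|U_{t_N}|\gg \sqrt{\sharp V(G^{(N)})}$ with high probability (together with concentration of $|U_{t_N}|$ via the second-moment bound), and this square-root threshold is precisely what produces the constant $\frac12$: at time $\alpha\,\CT(G^{(N)})$ the uncovered set has roughly $(\sharp V(G^{(N)}))^{1-\alpha}$ points, which exceeds $\sqrt{\sharp V(G^{(N)})}$ exactly when $\alpha<\frac12$. Indeed, if your criterion ``$|U_{t_N}|\to\infty$'' sufficed, the same argument applied at $t_N=(1-\delta)\CT(G^{(N)})$ (where the uncovered set still contains a diverging number of points) would yield a lower bound of $(1-o(1))\CT(G^{(N)})$, contradicting the upper bound you claim; so the quantitative $\sqrt{\,\cdot\,}$ comparison must be restored for the proof to identify the threshold $\frac12\CT(G^{(N)})$.
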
  
\begin{Remark} 
The derivation of \cite[Theorem 1.5]{MP12} is limited to 
lazy \abbr{srw} on graphs $G^{(N)}$, namely with $\mu_{xy} \equiv 1$ for
all $xy \in E(G)$. However, up to the obvious modifications we made 
in Assumptions \ref{Ass:Tran} and \ref{Ass:Har}, 
the same argument applies for uniformly elliptic weighted graphs,
as re-stated in Theorem \ref{Thm:MP}.
\end{Remark}

\begin{proof}[Proof of Theorem \ref{Thm:Main}(b).]
Thanks to Proposition \ref{Prop:Green} and \eqref{eq:Card} we
confirm that $(G^{(N)},\mu^{(N)})$ are uniformly locally 
transient for $\rho(r,s)=\cgr \, \tilde c \, \cv r^{d_w-d_f} s^{d_f}$.
Having $\mu^{(N)}(G^{(N)}) \ge \cv^{-1} (R_N)^{d_f} 
\to \infty$ and $G^{(N)}$ of uniformly bounded degrees (see
Remark \ref{Rem:Card}), conditions (a)-(c) of Assumption 
\ref{Ass:Tran} also hold here. Further, with $d_w < d_f$, the bound 
$\MT^U (G^{(N)}) \le c (R_N)^{d_w}$ of Proposition \ref{Prop:MTE}
yields Assumption \ref{Ass:Tran}(d). Considering 
Assumption \ref{Ass:UPHI} for $u(t,\cdot)=h(\cdot)$ results
with the lazy version $\tilde{P}^{(N)}$ satisfying the 
uniform Harnack inequality of Assumption \ref{Ass:Har}
for any $\alpha > \max(2,1/\cPHI)$. By our $p_0$-condition 
this is equivalent to the full Assumption \ref{Ass:Har} 
(see \cite[Proposition 3.5]{Telcs06}), and we 
complete the proof by applying Theorem \ref{Thm:MP}. 
\end{proof}

\medskip  \medskip  \medskip

\ack ~~
This project was supported by the Kyoto University Top Global University (KTGU) Project. 
The research of Amir Dembo was partially supported by NSF Grant Number DMS-1613091.
Takashi Kumagai was partially supported by JSPS KAKENHI (A) 25247007 and 17H01093. 
Chikara Nakamura was partially supported by JSPS KAKENHI Grant Number 15J02838.


\begin{thebibliography}{99}  
\bibitem{Barlow98} M.T. Barlow, Diffusions on fractals.
    Lectures on probability theory and statistics (Saint-Flour, 1995), 1--121, Lecture Notes in Math., 1690, Springer, Berlin, 1998.

\bibitem{Barlow17} M. T. Barlow,  Random walks and heat kernels on graphs. 
    London Mathematical Society Lecture Note Series, 438. Cambridge University Press, Cambridge, 2017.

\bibitem{BB97} M. T. Barlow  and  R. F. Bass,  Random walks on graphical Sierpinski carpets. Random walks and discrete
potential theory (Cortona, 1997), 26--55, Sympos. Math., XXXIX, Cambridge Univ. Press,
Cambridge, 1999.

\bibitem{BB99} M. T. Barlow and R. F. Bass, Brownian motion and harmonic analysis on Sierpinski carpets. Canad. J. Math. 51 (1999), no. 4, 673--744.

\bibitem{BB04} M. T. Barlow and  R. F. Bass,  Stability of parabolic Harnack inequalities. Trans. Amer. Math. Soc. 356 (2003), no. 4, 1501--1533.

\bibitem{BCK05} M. T. Barlow, T. Coulhon and T. Kumagai,   
     Characterization of sub-Gaussian heat kernel estimates on strongly recurrent graphs. Comm. Pure Appl. Math. 58 (2005), no. 12, 1642--1677.
     
\bibitem{BGK12} M. T.Barlow,  A. Grigor'yan and T.  Kumagai,  
      On the equivalence of parabolic Harnack inequalities and heat kernel estimates. J. Math. Soc. Japan 64 (2012), no. 4, 1091--1146.  

\bibitem{BM18} M. T.Barlow and M. Murugan, 
Stability of the elliptic Harnack inequality. 
Ann. Math. 187 (2018), no. 3, 777--823. 

\bibitem{CG98}  T. Coulhon and A. Grigor'yan, A. Random walks on graphs with regular volume growth. Geom. Funct. Anal. 8 (1998), no. 4, 656--701. 

\bibitem{C15}  D. A. Croydon, Moduli of continuity of local times of random walks on graphs in terms of the resistance metric. 
     Trans. London Math. Soc. 2 (2015), no. 1, 57--79.  

\bibitem{DDMP16} A. Dembo, J. Ding, J. Miller and Y. Peres,  Cut-off for lamplighter chains on tori: Dimension interpolation and phase transition.  
     Preprint, available at {\tt arXiv:1312.4522}.

\bibitem{GMT06} S. Goel, R. Montenegro and P.  Tetali, 
        Mixing time bounds via the spectral profile. Electron. J. Probab. 11 (2006), no. 1, 1--26.

\bibitem{GT01}  A. Grigor'yan and A. Telcs, Sub-Gaussian estimates of heat kernels on infinite graphs. Duke Math. J. 109 (2001), no. 3, 451--510.  

\bibitem{GT02} A. Grigor'yan and A. Telcs,  Harnack inequalities and sub-Gaussian estimates for random walks. Math. Ann. 324 (2002), no. 3, 521--556.

\bibitem{HJ97}  O. H\"{a}ggstr\"{o}m and  J. Jonasson, 
        Rates of convergence for lamplighter processes.   
        Stochastic Process. Appl. 67 (1997), no. 2, 227--249. 


\bibitem{HK04} B. M. Hambly and  T. Kumagai,   
Heat kernel estimates for symmetric random walks on a class of fractal graphs and stability under rough isometries. Proc. of Symposia in Pure Math. 72, Part 2, pp. 233--260, 
Amer. Math. Soc. 2004.


\bibitem{Jones96}  D. O. Jones,  
Transition probabilities for the simple random walk on the Sierpi\'{n}ski graph.
     Stochastic Process. Appl. 61 (1996), no. 1, 45--69.

\bibitem{Kigami01} J. Kigami,  Analysis on fractals. Cambridge Tracts in Mathematics, 143. Cambridge University Press, Cambridge, 2001.

\bibitem{Kumagai14}  T. Kumagai, Random walks on disordered media and their scaling limits. Lecture notes from the 40th Probability Summer School held in Saint-Flour, 2010. Lecture Notes in Mathematics, 2101. 
\'{E}cole d'\'{E}t\'{e} de probabilit\'{e}s
de Saint-Flour. Springer, Cham, 2014.

\bibitem{KN16} T. Kumagai and C. Nakamura, Lamplighter random walks on fractals.  
J. Theor. Probab. 31 (2018), no. 1, 68--92. 

\bibitem{LPW08}  D. A. Levin, Y. Peres and E. L. Wilmer,
 Markov chains and mixing times. 
American Mathematical Society, Providence, RI, 2009.

\bibitem{MP12}  J.  Miller and Y.  Peres, 
   Uniformity of the uncovered set of random walk and cutoff for lamplighter chains. 
  Ann. Probab. 40 (2012), no. 2, 535--577.

\bibitem{NP08} A.  Nachmias and Y. Peres,  Critical random graphs: diameter and mixing time. Ann. Probab. 36 (2008), no. 4, 1267--1286.   


\bibitem{MS17}   J. Miller and P. Sousi,  Uniformity of the late points of random walk on $Z^d_n$ for d $\ge$3. Probab. Theory Related Fields 167 (2017), no. 3-4, 1001--1056.

\bibitem{PR04}  Y. Peres and D.  Revelle, Mixing times for random walks on finite lamplighter groups. Electron. J. Probab. 9 (2004), no. 26, 825--845.

\bibitem{SC97} L. Saloff-Coste, Lectures on finite Markov chains. Lectures on probability theory and statistics (Saint-Flour, 1996), 301-413, Lecture Notes in Math., 1665, Springer, Berlin, 1997. 

\bibitem{Telcs01}  A. Telcs,  Local sub-Gaussian estimates on graphs: the strongly recurrent case. Electron. J. Probab. 6 (2001), no. 22, 33 pp.

\bibitem{Telcs06} A.  Telcs,  The art of random walks. Lecture Notes in Mathematics, 1885. Springer-Verlag, Berlin, 2006. 

\end{thebibliography}
\end{document}